\newtheorem{theorem}{Theorem}[section]
\newtheorem{lemma}[theorem]{Lemma}
\newtheorem{prop}[theorem]{Proposition}
\newtheorem{cor}[theorem]{Corollary}
\def \R{\mathbb{R}}
\def \Rn{\mathbb{R}^n}
\def \Rno{\mathbb{R}^n_0}
\def \Rnone{\mathbb{R}^n_1}
\def \RnI{\mathbb{R}^{n+1}}
\def \RnIp{\mathbb{R}^{n+1}_{+}}
\def \Rniip{\mathbb{R}^{n+1}_{1+}}
\def \RnIB{\mathbb{R}^{n+1} \setminus B_1}
\def \a{\alpha}
\def \b{\beta}
\def \d{\delta}
\def \g{\gamma}
\def \e{\varepsilon}
\def \ph{\varphi}
\def \th{\theta}
\def \s{\sigma}
\def \L{\mathcal{L}}
\def \x{\chi}
\def \z{\zeta}
\def \nd{\partial_{\nu}}
\def \Om{\Omega}
\def \Lap{\triangle}
\def \grad{\nabla}
\def \S{\mathcal{S}}
\def \Czinf{C^{\infty}_0}
\def \Omt{\tilde{\Omega}}
\def \half{\frac{1}{2}}
\def \Lph{\mathcal{L}_{\ph}}
\def \Lphq{\mathcal{L}_{q,\ph}}
\def \Lphe{\mathcal{L}_{\ph,\e}}
\def \Lphet{\tilde{\mathcal{L}}_{\ph, \e}}
\numberwithin{equation}{section}
\begin{document}

\title[Partial Data Neumann-to-Dirichlet Map]{Partial Data for the Neumann-to-Dirichlet Map}

\author[Chung]{Francis J. Chung}
\address{Department of Mathematics and Statistics, University of Jyv\"{a}skyl\"{a}, Jyv\"{a}skyl\"{a}, Finland}

\subjclass[2000]{Primary 35R30}

\keywords{Neumann-Dirichlet map, Calder\'{o}n problem, Inverse problems, Carleman estimates}

\begin{abstract}
We show that measurements of the Neumann-to-Dirichlet map on a certain part of the boundary of a domain in $\R^N$, $N \geq 3$, for inputs with support restricted to the other part, determine an electric potential on that domain.  Given a convexity condition on the domain, either the set on which measurements are taken, or the set on which input functions are supported, can be made to be arbitrarily small.  The result is analogous to the result by Kenig, Sj\"{o}strand, and Uhlmann for the Dirichlet-to-Neumann map.  The main new ingredient in the proof is a Carleman estimate for the Schr\"{o}dinger operator with appropriate boundary conditions.  
\end{abstract}

\maketitle

\section{Introduction}


Consider the Euclidean space $\RnI$, $n \geq 2$, and suppose $\Om$ is a smooth bounded domain in $\RnI$.  Now suppose that $q \in L^{\infty}(\Om)$ is such that the problem
\begin{equation}
\begin{split}
(-\Lap + q)u &= 0 \mbox{ in } \Om \\
\partial_{\nu} u &= g \mbox { on } \partial \Om
\end{split}
\end{equation}
has a unique solution $u \in H^1(\Om)$ for every $g \in H^{-\half}(\partial \Om)$.  Then $q$ defines a Neumann-to-Dirichlet (ND) map $N_q: H^{-\half}(\partial \Om) \rightarrow H^{\half}(\partial \Om)$ by $N_q(g) = u|_{\partial \Om}$.

The basic inverse problem here is whether $N_{q}$ determines $q$.  This question is related to the corresponding question for the Dirichlet-to-Neumann (DN) map, which has been studied in several papers.  Notably, Sylvester and Uhlmann proved uniqueness for the DN problem in ~\cite{SU}, and Nachman gave a reconstruction method in ~\cite{N}.  For the Neumann-to-Dirichlet map, the fact that $N_{q}$ determines $q$ is a consequence of the argument in ~\cite{SU}.  

A more difficult question is whether partial knowledge of $N_{q}$ determines $q$.  Some recent papers on this subject have been written for the two-dimensional case.  In ~\cite{IUY2}, Imanuvilov, Uhlmann, and Yamamoto in ~\cite{IUY2} proved that measuring $N_{q}$ on arbitrary open domains determines $q$ for $q \in W^{1,p}$, $p >2$.  A slightly different problem, where assumptions are made on the potential in the neighbourhood of the boundary, was addressed by Hyv\"{o}nen, Piiroinen, and Seiskari, in ~\cite{HPS}. Earlier work by Imanuvilov, Uhlmann, and Yamamoto was also done for the DN map in two dimensions in ~\cite{IUY1}.  The results of ~\cite{IUY1} were then generalized to Riemannian surfaces by Guillarmou and Tzou in ~\cite{GT}.  

For three or higher dimensions, the only work known to this author on the partial data ND map problem comes from Isakov, who proved in ~\cite{I} that subsets of the boundary which coincide with a hyperplane or hypersphere may be ignored in the measurements, for both the ND and DN problems. For more general subsets of the boundary, in three or more dimensions, on the DN problem, there are several previous results.  Bukhgeim and Uhlmann in ~\cite{BU} and Kenig, Sj\"{o}strand,  and Uhlmann in ~\cite{KSU} show for the DN problem, roughly speaking, that measurements of the Dirichlet-to-Neumann map on certain parts of the boundary, using input functions whose supports are contained in the other part, determine $q$.  An improvement on these results is also given by Kenig and Salo in ~\cite{KeSa}.  Dos Santos Ferreira, Kenig, Sj\"{o}strand, and Uhlmann in ~\cite{DKSU}, Knudsen and Salo in ~\cite{KS}, and the present author in ~\cite{C} also provide similar results for the magnetic Schr\"{o}dinger equation, with a first order term.  A more comprehensive survey of progress in partial data problems of this sort can be found in ~\cite{KeSa_survey}.  

In this paper we will prove results analogous to those in ~\cite{BU} and ~\cite{KSU} for the Neumann-to-Dirichlet problem.  

At least part of the motivation for studying this question is to understand the case of partial data inverse problems for systems of equations, for which multiple types of boundary-data-to-boundary-data maps can exist.  Examples of these kinds of results can be found in work of Caro, Ola, and Salo for the Maxwell equations in ~\cite{COS} and in work of Salo and Tzou in ~\cite{ST}, for the Dirac equations. 

We can now state the main results.  Recall that $\Om \in R^{n+1}$, where $n \geq 2$.  If $\ph$ is smooth in a neighbourhood of $\Om$, define
\begin{equation*}
\begin{split}
\partial \Om_{+} &= \{ p \in \partial \Om | \partial_{\nu} \ph(p) \geq 0 \} \\
\partial \Om_{-} &= \{ p \in \partial \Om | \partial_{\nu} \ph(p) \leq 0 \} \\
\end{split}
\end{equation*}
where $\nu$ is the outward unit normal at $p$.  

\begin{theorem}\label{linearIPthm}
Let $q_1, q_2$ be in $L^{\infty}(\Om)$ such that $N_{q_1}$ and $N_{q_2}$ are defined. Let $\ph(x) = x_{n+1}$, and define $\partial \Om_{+}$ and $\partial \Om_{-}$  with respect to this choice of $\ph$.  Now let $\Gamma \subset \partial \Om$ be a neighbourhood of $\partial \Om_{+}$, and $Z \subset \partial \Om$ be a neighbourhood of $\partial \Om_{-}$.  Suppose 
\[
N_{q_1} g |_{\Gamma} = N_{q_2} g |_{\Gamma}
\]
for all $g \in H^{-\half}(\partial \Om)$ with support contained in $Z$.  Then $q_1 = q_2$ on $\Om$.
\end{theorem}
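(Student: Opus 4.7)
The plan is to adapt the Bukhgeim--Uhlmann and Kenig--Sj\"ostrand--Uhlmann strategy for the Dirichlet-to-Neumann problem to the Neumann-to-Dirichlet setting. Write $q = q_1 - q_2$. For $g \in H^{-\half}(\pd\Om)$ supported in $Z$, let $u_j$ solve $(-\Lap + q_j) u_j = 0$ in $\Om$ with $\nd u_j = g$, and set $w = u_1 - u_2$. Then $(-\Lap + q_2) w = -q u_1$ in $\Om$, $\nd w = 0$ on $\pd\Om$, and $w|_\G = 0$ by hypothesis. Integrating by parts against any solution $v$ of $(-\Lap + q_2) v = 0$ in $\Om$ gives the integral identity
\[
\int_\Om q \, u_1 \, v \, dx \; = \; \int_{\pd\Om \setminus \G} w \, \nd v \, dS.
\]
The theorem will follow if one can choose $u_1$ and $v$ so that, along a sequence of parameters $h \to 0$, the left side recovers the Fourier content of $q$ at an arbitrary frequency, while the right side vanishes.

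To this end I would build complex geometric optics (CGO) solutions attached to the linear phase $\ph(x) = x_{n+1}$, of the form $u_1 = e^{-\ph/h}(a_1 + r_1)$ and $v = e^{\ph/h}(a_2 + r_2)$, with amplitudes $a_j$ satisfying the usual eikonal and transport equations and carrying suitable transverse oscillations, plus $L^2$ corrections $r_j = o(1)$. Compared with the DN construction, there are two extra constraints: first, $\nd u_1$ must be supported in $Z$, so that the hypothesis on $N_{q_1} - N_{q_2}$ actually applies; second, $\nd v$ must be $o(1)$ on $\pd\Om \setminus \G$, so that the boundary integral above vanishes in the limit. These are enforced by combining the WKB ansatz with a correction obtained by solving an inhomogeneous problem with prescribed Neumann data on $\pd\Om \setminus Z$ (respectively $\pd\Om \setminus \G$), and controlling that correction via the Carleman estimate below.

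The new ingredient needed is a Carleman estimate for the conjugated Schr\"odinger operator $\Lph := e^{\ph/h}(-\Lap + q)e^{-\ph/h}$ with Neumann-type boundary terms, schematically of the form
\[
\frac{1}{h} \| u \|_{L^2(\Om)}^2 + h \int_{\backom} \nd \ph \, |\nd u|^2 \, dS \; \lesssim \; \| \Lph u \|_{L^2(\Om)}^2 + h \int_{\frontom} |\nd \ph| \, |\nd u|^2 \, dS
\]
for $u \in \HII$. This is the principal obstacle: standard Carleman estimates for the Schr\"odinger operator with a linear weight come with boundary terms naturally paired with Dirichlet boundary conditions, and a careful integration-by-parts rearrangement, possibly combined with an extension of $\Om$ across a convenient part of $\pd\Om$, will be needed to control $\backom$ and $\frontom$ asymmetrically. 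Once this estimate is in place, dualising it gives the desired CGO corrections, and inserting the resulting solutions into the integral identity while sending $h \to 0$ yields vanishing of $\hat q$ on a set of frequencies large enough to conclude $q \equiv 0$ in $\Om$.
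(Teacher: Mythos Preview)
Your overall scaffold---integral identity, CGO solutions, Carleman estimate---matches the paper's, but the Carleman estimate you wrote down is the wrong one, and this is the crux of the whole problem. The estimate
\[
\frac{1}{h}\|u\|_{L^2(\Om)}^2 + h\int_{\backom}\nd\ph\,|\nd u|^2\,dS \;\lesssim\; \|\Lph u\|_{L^2(\Om)}^2 + h\int_{\frontom}|\nd\ph|\,|\nd u|^2\,dS,
\qquad u\in \HII,
\]
is the Bukhgeim--Uhlmann estimate: the test functions have zero \emph{Dirichlet} trace and the boundary terms involve $|\nd u|^2$. Dualising it yields solutions with prescribed Dirichlet data vanishing on a given piece of $\pd\Om$, which is exactly what the DN problem needs. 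It does \emph{not} produce solutions with prescribed Neumann data, so it cannot give you $\nd u_1|_{Z^c}=0$, nor can it force $\nd v$ to be small on $\G^c$. The extension/reflection idea you mention is Isakov's trick and only works when the relevant piece of $\pd\Om$ is flat or spherical; it will not handle a general $\G^c$.

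What the paper actually proves (Theorem~1.3) is a Carleman estimate for a different class of test functions: $w\in H^2(\Om)$ with $w=\nd w=0$ on $\G$ and the Neumann-type condition $h\nd(e^{-\ph/h}w)=h\s\,e^{-\ph/h}w$ on $\G^c$. The controlled boundary quantity is the \emph{trace} $\|w\|_{H^1_s(\G^c)}$, not $\|\nd w\|$. Getting this estimate is the main work: a direct integration by parts (Section~3) leaves an unwanted tangential-gradient term $h^{1/2}\|h\grad_t w\|_{L^2(\G^c)}$ on the right, and Sections~4--7 remove it via a pseudodifferential factorisation $\Lph \approx (h\pd_y - T_{1+|\xi|})(h\pd_y - T_{1-|\xi|})$, using the Neumann boundary condition to convert the trace of the second factor into an $\dot H^1$ norm on $\G^c$. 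Once this estimate is in hand, its $-\ph$ version (Corollary~9.3) is dualised to build the CGO $u_1$ with $\nd u_1|_{Z^c}=0$; the other CGO $u_2$ is a \emph{standard} interior CGO with no boundary control. The boundary integral is then handled not by making $\nd u_2$ small but by applying Theorem~1.3 directly to $e^{\ph/h}(u_1-w)$, which satisfies the mixed boundary conditions exactly and hence has $L^2(\G^c)$-trace of order $h^{2}$; paired with the crude bound $\|e^{-\ph/h}\nd u_2\|_{L^2(\G^c)}=O(h^{-3/2})$ this makes the boundary term $O(h^{1/2})$.
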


\begin{theorem}\label{logIPthm}
Let $q_1, q_2$ be in $L^{\infty}(\Om)$ such that $N_{q_1}$ and $N_{q_2}$ are defined. Let $p \in \RnI$ be outside the convex hull of $\overline{\Om}$, and let $\ph(x) = \pm \log |x-p|$. Define $\partial \Om_{+}$ and $\partial \Om_{-}$  with respect to the choice of $\ph$.  Now let $\Gamma \subset \partial \Om$ be a neighbourhood of $\partial \Om_{+}$, and $Z \subset \partial \Om$ be a neighbourhood of $\partial \Om_{-}$.  Suppose 
\[
N_{q_1} g |_{\Gamma} = N_{q_2} g |_{\Gamma}
\]
for all $g \in H^{-\half}(\partial \Om)$ with support contained in $Z$.  Then $q_1 = q_2$ on $\Om$.
\end{theorem}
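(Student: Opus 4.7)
The plan is to adapt the framework of Kenig--Sj\"ostrand--Uhlmann \cite{KSU} to the Neumann-to-Dirichlet setting by swapping the roles of Dirichlet and Neumann data throughout the argument.  First, convert the hypothesis into a boundary integral identity.  Given $g \in H^{-\half}(\pd\Om)$ with $\mathrm{supp}\, g \subset Z$, let $u^{(i)}$ solve $(-\Lap + q_i)u^{(i)} = 0$ in $\Om$ with $\nd u^{(i)} = g$, and set $w = u^{(1)} - u^{(2)}$.  Then $(-\Lap + q_2)w = (q_2 - q_1)u^{(1)}$ in $\Om$, $\nd w = 0$ on $\pd\Om$, and $w|_\G = 0$ by hypothesis.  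For any $v$ solving $(-\Lap + q_2)v = 0$ in $\Om$, Green's formula yields
\begin{equation*}
\int_\Om (q_1 - q_2) u^{(1)} v \, dx = \int_{\pd \Om \setminus \G} w\, \nd v \, dS.
\end{equation*}
The strategy is to substitute complex geometric optics (CGO) solutions of the form $u^{(1)} = e^{-\ph/h}(a_1 + r_1)$ and $v = e^{\ph/h}(a_2 + r_2)$ with $\ph = \pm\log|x-p|$, and to pass to the limit as $h \to 0$.

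The crucial new technical input, and the main obstacle, is a Carleman estimate for the conjugated Schr\"odinger operator $\Lphe = e^{\ph/h}(-\Lap + q)e^{-\ph/h}$ tailored to Neumann rather than Dirichlet boundary data.  Schematically, for $u$ smooth on $\overline{\Om}$ one should obtain
\begin{equation*}
h\|e^{\ph/h}u\|_{L^2(\Om)}^2 + h\|e^{\ph/h}u\|_{L^2(\backom)}^2 \lesssim \|e^{\ph/h}(-\Lap + q)u\|_{L^2(\Om)}^2 + h\|e^{\ph/h}u\|_{L^2(\frontom)}^2 + \|e^{\ph/h}\nd u\|_{L^2(\pd\Om)}^2,
\end{equation*}
possibly refined by weights of the form $|\nd\ph|^{\pm 1/2}$.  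The derivation would proceed through the usual splitting $\Lphe = A + iB$ into self-adjoint and anti-self-adjoint parts and positivity of $\tfrac{i}{h}[A,B]$ enabled by the limiting Carleman weight condition satisfied by $\pm\log|x-p|$.  However, since $u|_{\pd\Om}$ is not available as a boundary condition, the boundary contributions from integration by parts must be redistributed so that a trace of $u$ on the favorable side is absorbed into the left-hand side while the Neumann trace appears on the right.

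With this estimate in hand, a duality argument produces CGO solutions $u^{(1)}$ and $v$, in which the amplitudes $a_1, a_2$ are semiclassical expansions built from the harmonic conjugate $\psi$ of $\ph$ (each carrying an oscillation $e^{i\lambda\psi/h}$ as in \cite{KSU}), while $r_j = O(h)$ in $L^2(\Om)$.  The Neumann support constraint $\mathrm{supp}(\nd v) \subset Z$ can be enforced by subtracting from $v$ a solution to a boundary value problem whose Neumann data is a cutoff of the natural trace of $e^{\ph/h}a_2$ on $\pd\Om \setminus Z$, again solvable via the Carleman estimate.

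Finally, substituting the CGOs into the integral identity, the right-hand side is controlled by applying the Carleman estimate with weight $\ph$ directly to $w$: the conditions $\nd w = 0$ on $\pd\Om$ and $w|_\G = 0$, combined with the inclusion $\pd\Om \setminus \G \subset \pd\Om \setminus \backom$, ensure that the boundary integral is $o(1)$ as $h \to 0$.  The left-hand side converges to $\int_\Om (q_1 - q_2) a_1 a_2 \, dx$, and after varying the phase parameter $\lambda$ and the reference point $p$ outside the convex hull of $\overline{\Om}$, one obtains a dense family of test products.  The analytic microlocal and Radon transform argument of \cite{KSU} then forces $q_1 = q_2$ in $\Om$.
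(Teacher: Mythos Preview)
Your overall outline---integral identity via Green's formula, a Carleman estimate, CGO solutions, then the \cite{KSU} endgame---matches the paper's architecture, and your $w = u^{(1)} - u^{(2)}$ is exactly the paper's $u_1 - w$. The gap is in the Carleman estimate itself. When the $A + iB$ commutator computation is carried out with the Neumann-type condition $h\partial_\nu(e^{-\ph/h}w) = 0$ on $\Gamma^c$ (which is what your $w$ satisfies), the boundary terms from integration by parts do \emph{not} reduce to a zeroth-order trace of $w$: they leave behind a \emph{tangential derivative} term $h\|h\grad_t w\|_{L^2(\Gamma^c)}^2$ on the right-hand side (this is exactly Proposition~\ref{preCarl}). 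That term carries the same power of $h$ as the favorable boundary term and cannot be absorbed by ``redistributing'' contributions; removing it is the main technical content of the paper, not an afterthought.

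The mechanism the paper uses is a pseudodifferential factorization $\Lph \approx (h\partial_y - T_{G_+})(h\partial_y - T_{G_-})$ after locally flattening $\Gamma^c$ to a hyperplane. One factor is elliptic in the sense of Lemma~\ref{bddness}, so inverting it, applying the semiclassical trace theorem to the other factor, and then invoking the boundary relation $h\partial_\nu w = (\partial_\nu\ph)w$ on the flat piece yields $h^{1/2}\|h\grad_t w\|_{L^2(\Gamma^c)} \lesssim \|\Lph w\|_{L^2}$. Carrying this out requires a small/large frequency splitting (Propositions~\ref{smallprop} and~\ref{largeprop}) to keep the symbols smooth near the characteristic set, and for the logarithmic weight a separate operator calculus on $\{r>1\}$ (Sections~8--9). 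None of this is visible in your sketch, so the step ``derivation would proceed through the usual splitting'' does not go through as written. A secondary point: the Neumann support constraint belongs on $u^{(1)}$, so that $g = \partial_\nu u^{(1)}$ is admissible in the hypothesis; the paper manufactures such CGOs by a Hahn--Banach argument from the dual Carleman estimate (Lemma~\ref{HBsolns} and Proposition~\ref{boundarysolns}). No constraint on $v$ is needed, and your proposed correction of $v$ is misplaced.
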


A few remarks are in order.  First note that if $\Om$ is strictly convex (convex, and tangent planes at boundary points intersect the boundary in exactly one point) then Theorem \ref{logIPthm}, with the choice of $\ph = -\log|x-p|$, implies that the set on which the Neumann-to-Dirichlet maps are measured can be made arbitrarily small, by proper choice of $p$, provided the input functions are allowed to have support on a large part of the boundary.  On the other hand, choosing $\ph = +\log|x-p|$ implies that the set on which the input functions are supported can be arbitrarily small, provided one can measure the Neumann-to-Dirichlet map on a large subset of the boundary.  

Secondly, note that strictly speaking, Theorem \ref{linearIPthm} can be proved as a corollary of Theorem \ref{logIPthm} by choosing $p$ very far away from $\Om$.  However, for the sake of clarity, it will be easier to explain the proofs first in the case in which $\ph$ is linear, and then describe the proofs for the logarithmic cases in light of this explanation.  

Thirdly, these theorems imply a Neumann-to-Dirichlet result for the conductivity equation. If $\gamma \in C^2(\overline{\Om})$ is strictly positive, $g \in H^{-\half}(\partial \Om)$, and $u \in H^1(\Om)$ solves
\begin{eqnarray*}
\grad \cdot (\gamma \grad u) &=& 0 \mbox{ in } \Om \\
 \gamma \partial_{\nu} u|_{\partial \Om} &=& g
\end{eqnarray*}
then we can define $N_{\gamma}$ for the conductivity problem as the map sending $g$ to $u|_{\partial \Om}$.  If $\partial_{\nu} \gamma = 0$, and $q = \frac{\Lap \sqrt{\gamma}}{\sqrt{\gamma}}$, then by a change of variables (see ~\cite{SU}) 
\[
N_{q}(f) = \gamma^{\half} N_{\gamma}(\gamma^{\half} f).
\]
Therefore, suppose that $\gamma_1$ and $\gamma_2$ are such that $\partial_{\nu} \gamma_1 = \partial_{\nu} \gamma_2 = 0$ and $\gamma_1 = \gamma_2$ on the boundary of $\Om$.  Then if 
\[
N_{\gamma_1} g |_{\Gamma} = N_{\gamma_2} g |_{\Gamma}
\]
for all $g \in H^{-\half}(\partial \Om)$ with support in $Z$, where $\Gamma$ and $Z$ are as in Theorem \ref{linearIPthm} or \ref{logIPthm}, then $\gamma_1 = \gamma_2$.  

The partial data results for the Dirichlet-to-Neumann problem for $(-\Lap + q)$, discussed above, apply to the conductivity problem in a somewhat stronger fashion; see ~\cite{KSU}, for example.  

Note that the Dirichlet-to-Neumann problem for the conductivity equation has been the subject of much study in its own right.  In the case of $C^2$ conductivity in three and higher dimensions, the change of variables alluded to above, together with the work of Kohn and Vogelius in ~\cite{KV}, means that results in ~\cite{SU}, ~\cite{BU}, and ~\cite{KSU}, among others, apply to the conductivity equation as well.  Better regularity results have also been given for the conductivity equation: in three and higher dimensions, Haberman and Tataru have given a result for $W^{1, \infty}$ conductivity in the full data case in ~\cite{HT}, and in the two dimensional case, Astala and P\"{a}iv\"{a}rinta solved the Dirichlet-to-Neumann problem for $L^{\infty}$ conductivity in ~\cite{AP}.  In addition, Zhang has given a partial data result for less regular conductivities in three and higher dimensions in ~\cite{Zh}. 

The proofs of Theorem \ref{linearIPthm} and \ref{logIPthm} rely on a Carleman estimate, which will be stated as a theorem in its own right.  Let $h > 0$, and for a given choice of $\ph$, define
\[
\Lphq = e^{\frac{\ph}{h}}h^2 (-\Lap + q) e^{-\frac{\ph}{h}}  
\]

\begin{theorem}\label{mainCarl}
Choose $\ph$ to be as in Theorem \ref{linearIPthm} or \ref{logIPthm}. Define $\partial \Om_{+}$ and $\partial \Om_{-}$ with respect to that choice of $\ph$, and let $\Gamma \subset \partial \Om$ be a neighbourhood of $\partial \Om_{+}$.  Let $\Gamma^c$ denote $\partial \Om \setminus \Gamma$.  Now there exists $h_0 > 0$ such that for all $0 < h < h_0$, 
\begin{equation}\label{theCarl}
h \|w\|^2_{L^2(\Gamma^c)} + h\|h\grad_t w\|_{L^2(\Gamma^c)}^2 + h^2\|w\|^2_{L^2(\Om)} + h^2\|h\grad w\|^2_{L^2(\Om)} \lesssim \| \Lphq w \|^2_{L^2(\Om)}
\end{equation}
whenever $w \in H^2(\Om)$ with 
\begin{equation}\label{BC}
\begin{split}
w, \partial_{\nu} w &= 0 \mbox{ on } \Gamma \\
h\partial_{\nu} (e^{-\frac{\ph}{h}} w) &= h\s e^{-\frac{\ph}{h}} w \mbox{ on } \Gamma^c.
\end{split}
\end{equation}
for some smooth function $\s$ bounded independently of $h$ on $\Om$. Here $\grad_t$ represents the tangential part of the gradient along the boundary.  
\end{theorem}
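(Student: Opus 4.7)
The plan is to prove the estimate by the standard $L^2$ Carleman strategy---split the conjugated operator into formally self-adjoint and anti-self-adjoint parts, expand $\|\Lphq w\|^2$, integrate by parts, and combine the resulting interior quadratic form with favourable boundary terms---with the bulk of the work devoted to the boundary analysis. Since $\ph = x_{n+1}$ and $\ph = \pm\log|x-p|$ are limiting Carleman weights, the interior Poisson bracket vanishes on the characteristic set, but one can still recover the loss-of-one-derivative interior estimate $h^2\|w\|^2_{L^2(\Om)}+h^2\|h\grad w\|^2_{L^2(\Om)} \lesssim \|\Lphq w\|^2 + \mathrm{BT}$ from the standard calculation: $\|Aw\|^2 + \|Bw\|^2$ is elliptic off the characteristic set, the interior commutator $([A,B]w,w)$ is nonnegative on it, and the gradient bound follows from the elliptic structure via $\|\Lphq w\|\cdot\|w\|$-type manipulations.

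Write $\Lph = A + B$ where $A = -h^2\Lap - |\grad\ph|^2$ is formally self-adjoint and $B = 2h\grad\ph\cdot\grad + h\Lap\ph$ is formally anti-self-adjoint, so that
\[
\|\Lph w\|^2_{L^2(\Om)} = \|Aw\|^2_{L^2(\Om)} + \|Bw\|^2_{L^2(\Om)} + ([A,B]w,w)_{L^2(\Om)} + \mathrm{BT},
\]
where $\mathrm{BT}$ collects all boundary integrals arising from the integration by parts. The potential term $h^2qw$ contributes $O(h^4\|w\|^2_{L^2})$ to $\|\Lphq w\|^2$, which is dominated by $h^2\|w\|^2_{L^2}$ and absorbed for $h$ small.

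The main work is the analysis of $\mathrm{BT}$. The Cauchy condition $w = \nd w = 0$ on $\G$ kills every contribution from $\G$. On $\G^c$, rewrite the boundary condition $h\nd(e^{-\ph/h}w) = h\s e^{-\ph/h}w$ as $h\nd w = (\nd\ph + h\s)w$, and use this identity to eliminate $h\nd w$ in every boundary integrand in favour of $w$. Because $\G$ is an open neighbourhood of $\backom = \{\nd\ph \geq 0\}$, the transition set $\{\nd\ph = 0\}$ lies strictly inside $\G$, so $\nd\ph \leq -\d < 0$ uniformly on $\G^c$ for some $\d > 0$. The dominant boundary contribution from the commutator computation then has the schematic form
\[
h\int_{\G^c}(-\nd\ph)\bigl[|h\grad_t w|^2 + C|w|^2\bigr]\,dS,
\]
with any sign-indefinite cross term of the form $w\cdot h\grad_t w$ controlled by Cauchy--Schwarz against the positive pieces; this yields the claimed $h\|w\|^2_{L^2(\G^c)} + h\|h\grad_t w\|^2_{L^2(\G^c)}$ contributions.

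The principal technical obstacle is verifying by direct computation that after the Robin substitution the boundary terms on $\G^c$ combine with the correct signs; in particular, one must confirm that no uncontrolled top-order term $h\nd\ph\,|h\grad_t w|^2$ with the wrong sign survives, and that the subprincipal contributions (from $\Lap\ph$, the curvature of $\partial\Om$, and $h\s$) are either absorbable into the dominant positive pieces or of favourable sign. These points have to be verified case by case for the linear and logarithmic weights, where the relevant geometric quantities differ.
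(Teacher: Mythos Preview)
Your proposal has a genuine gap at exactly the point you flag as ``the principal technical obstacle,'' and it is not a matter of checking signs case by case: the sign is in fact unfavourable. When you carry out the boundary integration by parts with the Robin substitution $h\nd w = (\nd\ph + h\s)w$ on $\G^c$, the term that carries $|h\grad_t w|^2$ arises from the piece of $(Aw,Bw)$ involving the tangential Laplacian in $A$ paired with $(\nd\ph)w$ in the boundary contribution; after one tangential integration by parts this produces a term of the form $h\,(\nd\ph)\,|h\grad_t w|^2$ on $\G^c$. Since $\nd\ph < 0$ on $\G^c$, this term is \emph{negative} on the side of $\|\Lph w\|^2$, so when moved to the other side it becomes an extra term $h\|h\grad_t w\|^2_{L^2(\G^c)}$ that must be \emph{bounded}, not a positive contribution to be harvested. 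The direct $A+B$ computation therefore yields only
\[
h\|w\|^2_{L^2(\G^c)} + h^2\|w\|^2_{H^1(\Om)} \lesssim \|\Lphe w\|^2_{L^2(\Om)} + h\|h\grad_t w\|^2_{L^2(\G^c)},
\]
which is the paper's Proposition~\ref{preCarl}, and no amount of Cauchy--Schwarz juggling among the boundary terms will flip that sign.

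Absorbing the bad $h\|h\grad_t w\|^2_{L^2(\G^c)}$ term is the real content of the theorem and requires a completely different idea that your outline does not touch: one flattens $\G^c$ locally, factors the conjugated operator on the half-space as $(h\pd_y - T_{F_+})(h\pd_y - T_{F_-})$ via Fourier multipliers in the tangential variables, uses that one factor is an isomorphism $H^1\to L^2$ to gain a derivative, and then applies a trace inequality together with the boundary condition to recover $h^{1/2}\|w\|_{\dot H^1(\G^c)}$ from $\|\Lph w\|_{L^2}$ alone. This factorization argument (Sections~4--9 of the paper) is unavoidable; the boundary term has the wrong sign precisely because the Robin condition is, from the Carleman-estimate point of view, the ``bad'' boundary condition compared to Dirichlet.
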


The constant implied in the $\lesssim$ sign is independent of $h$.  For the remainder of this paper, inequalities of the form $A(h,w) \lesssim B(h,w)$ should be interpreted to mean that there exist constants $h_0 > 0$ and $C > 0$, with $C$ independent of $h_0$, such that for all $0 < h < h_0$, $A(h,w) \leq CB(h,w)$.

Note that the estimate \eqref{theCarl} can be rewritten as 
\[
h \|w\|^2_{H^1_s(\Gamma^c)} + h^2\|w\|^2_{H^1_s(\Om)} \lesssim \| \Lphq w \|^2_{L^2(\Om)}
\]
where $H^1_s$ is the semiclassical Sobolev space with semiclassical parameter $h$.  For the rest of the paper we'll adopt the convention that Sobolev spaces are meant to be the semiclassical variety, and express the Carleman estimate this way.  For a reference on semiclassical analysis, see ~\cite{Z}.

Theorem \ref{mainCarl} allows us to construct complex geometrical optics (CGO) solutions to the problem $(-\Lap + q)u = 0$ with Neumann data vanishing on $Z^c$.  We can describe these solutions by the following proposition.  

\begin{prop}\label{boundarysolns}
If $\ph$ and $Z^c$ are defined as in either of Theorems \ref{linearIPthm} or \ref{logIPthm}, then there exists a solution $u \in H^1(\Om)$ of the problem 
\begin{eqnarray*}
(-\Lap + q) u &=& 0 \mbox{ on } \Om\\
\partial_{\nu} u|_{Z^c} &=& 0 
\end{eqnarray*}
of the form $u = e^{\frac{1}{h}(-\ph + i \psi)}(a+r)$, where $\psi$ and $a$ are smooth functions with bounds independent of $h$; $\psi$ is a solution to the eikonal equation $\grad \ph \cdot \grad \psi = 0,  |\grad \psi| = |\grad \ph|$; and $\|r\|_{L^2(\Om)} \leq O(h^{\half})$.
\end{prop}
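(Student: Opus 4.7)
The plan is to follow the standard complex geometrical optics construction, using Theorem~\ref{mainCarl} (applied to the weight $-\ph$, so that the roles of $\pd\Om_{\pm}$ exchange) via a duality argument to impose the vanishing Neumann condition on $Z^c$. Substituting $u = e^{(-\ph + i\psi)/h}(a+r)$ into $(-\Lap + q)u = 0$ and multiplying by $h^2 e^{(\ph - i\psi)/h}$ conjugates the equation to $\mathcal{L}_{\Phi,q}(a+r) = 0$, where $\mathcal{L}_{\Phi,q}$ denotes the conjugated operator for the complex weight $\Phi = \ph - i\psi$:
\[
\mathcal{L}_{\Phi,q} = -h^2 \Lap + 2h\grad\Phi\cdot\grad + h\Lap\Phi - |\grad\Phi|^2 + h^2 q.
\]
The zeroth-order term $|\grad\Phi|^2 = |\grad\ph|^2 - |\grad\psi|^2 - 2i\grad\ph\cdot\grad\psi$ vanishes precisely when $\psi$ solves the stated eikonal system; for $\ph = x_{n+1}$ one may take $\psi(x) = x_j$ with $j \le n$, and for $\ph = \pm\log|x-p|$ an angular spherical coordinate centred at $p$ works, both extending smoothly to a neighbourhood of $\bar\Om$. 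The $O(h)$ term then yields the linear transport equation $2(\grad\ph - i\grad\psi)\cdot\grad a + (\Lap\ph - i\Lap\psi)a = 0$, whose characteristics exit $\bar\Om$ in finite time, producing a smooth $h$-independent amplitude $a$.

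With $\psi$ and $a$ fixed in this way, $\mathcal{L}_{\Phi,q}\,a = h^2(qa - \Lap a)$, so the remainder $r$ must solve
\[
\mathcal{L}_{\Phi,q}\,r = h^2(\Lap a - qa) \quad \text{on } \Om,
\]
together with the boundary condition extracted from $\pd_{\nu}u|_{Z^c} = 0$, namely $h\pd_{\nu}(a+r) = (\pd_{\nu}\ph - i\pd_{\nu}\psi)(a+r)$ on $Z^c$. To produce $r$ I would invoke Theorem~\ref{mainCarl} with $\ph$ replaced by $-\ph$: then its ``$\Gamma$'' becomes a neighbourhood of $\pd\Om_{-}$ (which we may take to be $Z$) and its ``$\Gamma^c$'' a subset of $Z^c$, exactly the set on which our boundary condition for $r$ lives. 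Since multiplication by $e^{\pm i\psi/h}$ is an $L^2$-isometry that conjugates $\mathcal{L}_{-\ph,q}$ onto the operator relevant for us, the Carleman bound transfers to $\mathcal{L}_{\Phi,q}$ on an appropriate test class, shifting the Robin coefficient only by a smooth $h$-bounded multiple of $\pd_{\nu}\psi$ that can be absorbed into the function $\sigma$ of Theorem~\ref{mainCarl}. A standard Hahn--Banach duality argument then provides a weak $H^1(\Om)$ solution $r$ of the displayed equation with the required boundary condition, satisfying $\|r\|_{L^2(\Om)} \lesssim h^{-1/2}\|h^2(\Lap a - qa)\|_{L^2(\Om)} = O(h^{3/2})$, which is in particular $O(h^{\half})$.

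The main obstacle is aligning the boundary conditions: the Robin-type condition on $Z^c$ coming from $\pd_{\nu}u = 0$ does not match the $h\sigma$-Robin form of Theorem~\ref{mainCarl} verbatim, and one must check that the leading $\pd_{\nu}\ph$-term has the correct sign on $Z^c$ (which it does, because $Z^c \subset \pd\Om_{+}$ makes $\pd_{\nu}\ph \ge 0$ there) and that the residual $\pd_{\nu}\psi$ correction can be absorbed into the smooth bounded $\sigma$. Once this alignment is confirmed, the Hahn--Banach step produces no uncontrolled boundary contributions and the remaining estimates, including the elliptic-regularity boost from $L^2$ to $H^1$, are routine.
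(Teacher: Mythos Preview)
Your overall architecture---Carleman estimate for $-\ph$ plus a Hahn--Banach duality to manufacture the remainder---is exactly the route the paper takes (it is packaged there as Lemma~\ref{HBsolns} and Corollary~\ref{reverseCarl}). But your bookkeeping on the remainder is wrong, and the error is not cosmetic.

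You write the boundary condition on $Z^c$ as $h\pd_\nu(a+r)=(\pd_\nu\ph - i\pd_\nu\psi)(a+r)$ and then quote only the interior forcing when estimating $r$, concluding $\|r\|_{L^2}=O(h^{3/2})$. This ignores the inhomogeneous boundary datum that $r$ inherits from $a$: rearranging, $r$ must satisfy $(h\pd_\nu-\pd_\nu\ph)r = f$ on $Z^c$ with
\[
f = (\pd_\nu\ph - i\pd_\nu\psi)\,a - h\pd_\nu a \;+\; (\text{terms linear in }r),
\]
and the displayed forcing has $\|f\|_{L^2(Z^c)} = O(1)$, not $O(h)$. In the Hahn--Banach solvability lemma the boundary term enters as $h^{1/2}\|f\|_{L^2(\pd\Om)}$, so the best you get this way is $\|r\|_{L^2}=O(h^{1/2})$---still sufficient for the proposition, but for the right reason, not the one you gave. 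Relatedly, your claim that the $\pd_\nu\psi$ shift ``can be absorbed into the function $\sigma$'' is incorrect: in Theorem~\ref{mainCarl} the Robin coefficient is $\pd_\nu\ph + h\sigma$, so $\sigma$ only perturbs by $O(h)$, while $\pd_\nu\psi$ is $O(1)$. The fix is the one you also mention in passing: conjugate by $e^{i\psi/h}$ first, so you are solving $\Lphq r_0 = v$ with $(h\pd_\nu-\pd_\nu\ph)r_0|_{Z^c}=f$ for the \emph{real} weight $\ph$; then set $r=e^{-i\psi/h}r_0$.

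For comparison, the paper does not stop there. It inserts an additional ``reflected'' ansatz term $e^{\ell/h}b$, where $\ell$ and $b$ are built by formal power series and Borel's theorem near $Z^c$ so that $\ell|_{Z^c}=-\ph+i\psi$, $\pd_\nu\ell|_{Z^c}=-\pd_\nu(-\ph+i\psi)$, and $b|_{Z^c}=a|_{Z^c}$. This cancels the leading normal derivative of $e^{(-\ph+i\psi)/h}a$ on $Z^c$ and drives the boundary datum down to $O(h)$, so the Hahn--Banach piece $r_0$ is $O(h)$ in $L^2$. The price is that the reflected term itself has $\|e^{(\ell+\ph-i\psi)/h}b\|_{L^2}=O(h^{1/2})$ (from the exponential decay off $Z^c$), so the total remainder is again $O(h^{1/2})$. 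In other words, the reflected-wave construction does not improve the $L^2$ bound stated in the proposition; your simpler route, once the accounting is corrected, reaches the same conclusion.
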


In particular, $\psi$ and $a$ are as in the CGO solutions in ~\cite{BU} and ~\cite{KSU}, for $\ph$ linear and $\ph$ logarithmic, respectively.  

The rest of the paper is structured as follows.  In Section 2, we'll see how Theorem \ref{mainCarl} and Proposition \ref{boundarysolns} are used to prove Theorem \ref{linearIPthm} and Theorem \ref{logIPthm}.  In Section 3, we'll prove an initial version of the Carleman estimate with extra terms on the right hand side of the inequality.  Sections 4-7 are then devoted to making these extra terms go away in the linear case, where $\ph(x) = x_{n+1}$, as in Theorem \ref{linearIPthm}.  In Sections 8-9, we'll see how the proof is modified to deal with the logarithmic case, where $\ph = \pm \log |x-p|$, as in Theorem \ref{logIPthm}.  Finally, Section 10 is devoted to the proof of Proposition \ref{boundarysolns}.  
\vspace{4mm}

\noindent \textbf{Acknowledgements}  The author would like to thank Mikko Salo for introducing him to this problem, for sharing the idea behind Proposition \ref{preCarl}, for reading over the manuscript, and for several other helpful conversations.  This research was partially supported by the Academy of Finland.  Part of this work was also done at the University of Chicago, and here the author would also like to thank Carlos Kenig for his time and support.

\section{Using the Carleman Estimate}

Given the Carleman estimate in Theorem \ref{mainCarl}, and the CGO solutions guaranteed by Proposition \ref{boundarysolns}, the proofs of Theorem \ref{linearIPthm} and \ref{logIPthm} follow mostly from the arguments in ~\cite{BU} and ~\cite{KSU}.  First suppose that 
\[
u_1 = e^{\frac{-\ph + i\psi_1}{h}}(a_1 + r_1)
\]
is a CGO solution to
\begin{eqnarray*}
(-\Lap + q_1) u_1 &=& 0 \mbox{ on } \Om\\
\partial_{\nu} u_1 |_{Z^c} &=& 0, 
\end{eqnarray*}
as obtained from Proposition \ref{boundarysolns}.  Let 
\[
u_2 = e^{\frac{\ph + i\psi_2}{h}}(a_2 + r_2)
\]
be a standard CGO solution to $(-\Lap + q_2)u_2 = 0$.  Here $a_2$, $\psi_2$ and $r_2$ have the equivalent properties as for their counterparts in $u_1$, but nothing is guaranteed about the boundary behaviour of $u_2$.  Details can be found in ~\cite{SU}, ~\cite{KSU}, or ~\cite{DKSU}.  In fact, using the argument behind Proposition 2.4 in ~\cite{DKSU}, we can obtain $H^2$ regularity for $r_2$, and $\|r_2\|_{H^2(\Om)} = O(h).$

Now define $w \in H^1(\Om)$ to be the solution to
\begin{equation*}
\begin{split}
(-\Lap + q_2)w &= 0 \mbox{ on } \Om \\
\partial_{\nu} w|_{\partial \Om} &= \partial_{\nu} u_1 |_{\partial \Om} 
\end{split}
\end{equation*}
Then
\begin{eqnarray*}
& & \int_{\partial \Om} (N_{q_1} - N_{q_2}) (\partial_{\nu} u_1)\partial_{\nu}u_2 dS \\
&=& \int_{\partial \Om} (u_1 - w)\partial_{\nu}u_2 dS \\
&=& \int_{\Om} (u_1 - w)\Lap u_2 dV - \int_{\Om} \Lap(u_1 - w) u_2 dV \\
&=& \int_{\Om} (q_2 - q_1) u_1 u_2 dV. 
\end{eqnarray*}
Therefore if $N_{q_1}g = N_{q_2}g$ on $\Gamma$, for $g \in H^{-\half}(\partial \Om)$ with support in $Z$, then  
\begin{equation}\label{intid}
\int_{\Gamma^c} (u_1 - w) \partial_{\nu} u_2 dS = \int_{\Om} (q_2 - q_1)u_1 u_2 dV.
\end{equation}
Now $\|r_j\|_{L^2(\Om)} \leq O(h^{\half})$, so in the limit as $h \rightarrow 0$, the right side of \eqref{intid} becomes 
\[
\lim_{h \rightarrow 0} \int_{\Om} (q_2 - q_1) e^{i\frac{\psi_1 + \psi_2}{h}}a_1 a_2 dV.
\] 
Now consider the left side of \eqref{intid}.  
\[
\left| \int_{\Gamma^c} (u_1 -w) \partial_{\nu} u_2 dS \right| \leq h^{\half}\|e^{\frac{\ph}{h}}(u_1 - w)\|_{L^2(\Gamma^c)} \cdot h^{-\half}\|e^{-\frac{\ph}{h}}\partial_{\nu} u_2\|_{L^2(\Gamma^c)}.
\] 
The expression $e^{-\frac{\ph}{h}}\partial_{\nu} u_2$ can be written out as 
\[
e^{\frac{i\psi_2}{h}}\partial_{\nu}(a_2 + r_2) + h^{-1}\partial_{\nu}(\ph + i\psi_2) e^{\frac{i\psi_2}{h}}(a_2 + r_2).
\]
Since $a_2$ is smooth and bounded independently of $h$, 
\[
|e^{\frac{i\psi_2}{h}}\partial_{\nu}a_2 + h^{-1}\partial_{\nu}(\ph + i\psi_2) e^{\frac{i\psi_2}{h}}a_2| = O(h^{-1}).
\]
Now since $\|r_2\|_{H^2(\Om)} = O(h)$, we have that $\| \partial_{\nu}r_2\|_{L^2(\Gamma^c)} = O(h^{-\half})$ and $\|r_2\|_{L^2(\Gamma^c)} = O(h^{\half})$, so the expression
\[
h^{-\half}\|e^{-\frac{\ph}{h}}\partial_{\nu} u_2\|_{L^2(\Gamma^c)} 
\]
is $O(h^{-\frac{3}{2}})$.  Meanwhile, 
\[
\partial_{\nu}(e^{-\frac{\ph}{h}}e^{\frac{\ph}{h}}(u_1 - w))|_{\Gamma^c} = \partial_{\nu}(u_1 - w)|_{\Gamma^c} = 0
\] 
by definition, and $u_1 - w = 0$ on $\Gamma$ since $N_{q_1} = N_{q_2}$ there, so 
\[
\partial_{\nu} e^{\frac{\ph}{h}}(u_1 - w) = 0
\]
on $\Gamma$ also.  Therefore $e^{\frac{\ph}{h}}(u_1 - w)$ satisfies \eqref{BC}, and so the Carleman estimate applies to the first factor:
\begin{equation*}
\begin{split}
h^{\half}\|e^{\frac{\ph}{h}}(u_1 - w)\|_{L^2(\Gamma^c)} &\lesssim \|\L_{\ph,q_2} e^{\frac{\ph}{h}}(u_1 - w) \|_{L^2(\Om)} \\
                                                        &= h^2 \|e^{\frac{\ph}{h}}(-\Lap + q_2) (u_1 - w)\|_{L^2(\Om)} \\
                                                        &= h^2 \|e^{\frac{\ph}{h}}(-\Lap + q_2) u_1 \|_{L^2(\Om)} \\
                                                        &= h^2 \|e^{\frac{\ph}{h}}(-q_1 + q_2) u_1 \|_{L^2(\Om)} \\
                                                        &= h^2 \|(q_2 - q_1)e^{\frac{i\psi_1}{h}}(a_1 + r_1) \|_{L^2(\Om)} \\
\end{split}
\end{equation*}
Thus the first factor is $O(h^2)$, and so the left side of \eqref{intid} is $O(h^{\half})$.  Therefore in the limit as $h \rightarrow 0$, \eqref{intid} becomes
\[
\lim_{h \rightarrow 0} \int_{\Om} (q_2 - q_1) e^{i\frac{\psi_1 + \psi_2}{h}}a_1 a_2 dV = 0,
\]
and now it follows that $q_2 = q_1$ from the arguments in ~\cite{BU}, in the case that $\ph$ is linear, or by the arguments in ~\cite{KSU}, in the case that $\ph$ is logarithmic.  Therefore it remains only to prove Theorem \ref{mainCarl} and Proposition \ref{boundarysolns}.

\section{An Initial Carleman Estimate}

Let $h, \e > 0$, and define 
\[
\Lph = e^{\frac{\ph}{h}}h^2 \Lap e^{-\frac{\ph}{h}}
\]
and
\[
\Lphe = e^{\frac{\ph^2}{2\e}}\Lph e^{-\frac{\ph^2}{2\e}}.
\]
To begin, we'll prove the following Carleman estimate.  

\begin{prop}\label{preCarl}
Suppose $w \in H^2(\Om)$ satisfies the boundary conditions \eqref{BC}.  Then
\[
h^{\half}\|w\|_{L^2(\Gamma^c)} + \frac{h}{\sqrt{\e}}\|w\|_{H^1(\Om)} \lesssim \|\Lphe w \|_{L^2(\Om)} + h^{\half} \|h\grad_t w\|_{L^2(\Gamma^c)}.
\]
Here $H^1$ refers to the semiclassical Sobolev spaces with semiclassical parameter $h$, and $\grad_t$ is the tangential part of the gradient at $\partial \Om$.  
\end{prop}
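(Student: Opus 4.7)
The plan is a classical convexified-weight Carleman estimate for the conjugated Laplacian, adapted to the mixed boundary conditions \eqref{BC}. Introduce the convexified weight $\Phi = \ph + \frac{h}{2\e}\ph^2$, so that $\Lphe = e^{\Phi/h}h^2\Lap e^{-\Phi/h}$. Expanding and grouping by parity gives
\[
\Lphe w = Aw + Bw, \quad A = h^2\Lap + |\grad\Phi|^2, \quad B = -2h\grad\Phi\cdot\grad - h\Lap\Phi,
\]
with $A$ symmetric and $B$ antisymmetric, up to boundary terms. The working identity is
\[
\|\Lphe w\|_{L^2(\Om)}^2 = \|Aw\|_{L^2(\Om)}^2 + \|Bw\|_{L^2(\Om)}^2 + 2\Re\langle Aw, Bw\rangle_{L^2(\Om)},
\]
where integration by parts writes the cross term as the interior commutator $\langle[A,B]w, w\rangle_{L^2(\Om)}$ plus a surface integral $I_{\pd\Om}$ involving $w$, $h\pd_\nu w$, $h\grad_t w$, $\pd_\nu\Phi$, and the curvature of $\pd\Om$.

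For the interior, the convexifier is designed precisely so that a direct calculation delivers the lower bound
\[
\langle[A,B]w, w\rangle_{L^2(\Om)} \gtrsim \frac{h^2}{\e}\|w\|^2_{H^1(\Om)}.
\]
The mechanism: $\grad^2\Phi = (1+h\ph/\e)\grad^2\ph + (h/\e)\grad\ph\otimes\grad\ph$, and the rank-one convexifier piece $(h/\e)\grad\ph\otimes\grad\ph$ contributes positively at both the $|h\grad w|^2$ and $|w|^2$ levels, using ellipticity of $\ph$ (namely $|\grad\ph|\gtrsim 1$ on $\overline\Om$, which holds both for $\ph = x_{n+1}$ and for $\ph = \pm\log|x-p|$ with $p$ outside the convex hull). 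Lower-order terms in $h$ are absorbed into the nonnegative $\|Aw\|^2 + \|Bw\|^2$, which is then discarded.

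The boundary contribution $I_{\pd\Om}$ is the delicate part, and is the main obstacle. On $\Gamma$, the conditions $w = \pd_\nu w = 0$ annihilate every term. On $\Gamma^c$, rewrite the Robin condition in \eqref{BC} as $h\pd_\nu w = (\pd_\nu\ph + h\s)w$ and substitute it into every factor of $h\pd_\nu w$ in $I_{\pd\Om}$, converting each such factor into a multiple of $w$. Because $\Gamma$ is an open neighbourhood of $\pd\Om_+$, one has $\pd_\nu\ph \leq -c < 0$ on $\Gamma^c$, so the net coefficient of $|w|^2$ in the resulting surface integrand has a definite positive sign and yields a contribution $\gtrsim h\|w\|^2_{L^2(\Gamma^c)}$. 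The only indefinite remainder is a tangential piece of the form $\sim h\pd_\nu\ph\int_{\Gamma^c}|h\grad_t w|^2\,dS$, which is absolutely bounded by $h\|h\grad_t w\|_{L^2(\Gamma^c)}^2$ and moved to the right-hand side. Combining the interior and boundary lower bounds and taking square roots gives the estimate. The hard part is exactly this last step: verifying algebraically that after the Robin substitution the residual $|w|^2$-coefficient is positive — which is why $\pd\Om_+$ must be thickened to an open neighbourhood $\Gamma$, ensuring $\pd_\nu\ph$ is strictly negative on $\Gamma^c$ — and that the only indefinite leftover is tangential, hence legitimately placed on the right-hand side.
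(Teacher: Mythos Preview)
Your overall architecture matches the paper's: convexify, split $\Lphe = A + B$ with $A$ symmetric and $B$ skew, expand the square, and separately handle the interior commutator and the boundary contribution using the Robin substitution $h\pd_\nu w = (\pd_\nu\ph + h\s)w$ on $\Gamma^c$. The boundary discussion is essentially right in spirit; the paper does a careful computation in boundary normal coordinates and finds the surviving principal term is $-2h\int_{\Gamma^c}\pd_\nu\ph\,((\pd_\nu\ph)^2+|\grad\ph|^2)\,|w|^2\,dS$, with the indefinite leftovers bounded by $O(h^2)\|w\|_{L^2(\Gamma^c)}^2 + O(h)\|h\grad_t w\|_{L^2(\Gamma^c)}^2$, exactly as you describe.

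There is, however, a genuine gap in your interior estimate. You assert that $\langle[A,B]w,w\rangle_{L^2(\Om)} \gtrsim \frac{h^2}{\e}\|w\|_{H^1(\Om)}^2$ because the rank-one convexifier piece $(h/\e)\grad\ph\otimes\grad\ph$ ``contributes positively at both the $|h\grad w|^2$ and $|w|^2$ levels.'' But a rank-one Hessian only controls the directional derivative along $\grad\ph$: at the principal-symbol level the commutator contributes $\frac{h^2}{\e}\big((\xi\cdot\grad\ph)^2 + |\grad\ph|^4\big)$, which gives $\frac{h^2}{\e}\|w\|_{L^2}^2$ and $\frac{h^2}{\e}\|\grad\ph\cdot h\grad w\|_{L^2}^2$, but no control whatsoever of the tangential (to $\grad\ph$) derivatives at the $h^2/\e$ scale. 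For the linear weight $\ph=x_{n+1}$ this is transparent: you recover $\|h\pd_{n+1}w\|^2$ but not $\|h\grad_x w\|^2$. So the commutator alone cannot yield the full $H^1$ lower bound you claim.

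The missing mechanism is precisely the term you say you discard. The paper keeps $\|Aw\|^2$ and uses the identity $\|h\grad w\|_{L^2}^2 = -(Aw,w) + (|\grad\Phi|^2 w,w) + h(h\pd_\nu w,w)_{\pd\Om}$ together with Cauchy--Schwarz with parameter $K\sim 1/\e$ to convert a portion of $\|Aw\|^2$ into $\frac{h^2}{\e}\|h\grad w\|_{L^2}^2$; the $O(h^2)\|w\|_{H^1}^2$ error from the second-order remainder of the commutator is then absorbed. You must retain and use $\|Aw\|^2$ for this step, not throw it away. Note also that this manoeuvre, and the integration by parts inside the commutator computation, generate additional boundary terms of size $O(h^3/\e)\|w\|_{H^1(\Gamma^c)}^2$; these are harmless but must be tracked and absorbed at the end.
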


Any Sobolev spaces that appear for the remainder of the paper are meant to be the semiclassical ones.  

\begin{proof}

The proof of this proposition follows the ideas from ~\cite{DKSU} quite closely, but with different boundary terms.  We can begin by writing $\Lphe$ out explicitly as 
\[
\Lphe = h^2\Lap + |\grad \ph_c|^2 - h \Lap \ph_c - 2\grad \ph_c \cdot h \grad
\]
where $\ph_c$ is the convexified version of $\ph$, 
\[
\ph_c = \ph + \frac{h \ph^2}{2\e}.
\]

Define the operators $P$ and $iQ$ by 
\[
P = h^2\Lap + |\grad \ph_c|^2
\]
and
\[
iQ = - h \Lap \ph_c - 2\grad \ph_c \cdot h\grad
\]
so 
\[
\Lphe = P + iQ.
\]
Then
\[
\| \Lphe w\|_{L^2(\Om)}^2 = \| Pw\|_{L^2(\Om)}^2 + \|Qw\|_{L^2(\Om)}^2 + (Pw,iQw) + (iQw, Pw),
\]
where $( \cdot, \cdot)$ denotes the $L^2$ inner product on $\Om$.  Integrating by parts,
\begin{eqnarray*}
\| \Lphe w\|_{L^2(\Om)}^2 &=& \| Pw\|_{L^2(\Om)}^2 + \|Qw\|_{L^2(\Om)}^2 + (i[P,Q]w,w) \\
                          & & + (Pw, -2h(\partial_{\nu} \ph_c) w)_{\partial \Om} -h^2(\partial_{\nu} iQw,w)_{\partial \Om} +h^2(iQw, \partial_{\nu}w)_{\partial \Om}.
\end{eqnarray*}

We will first consider the nonboundary terms on the right hand side.  As in the proof of Proposition 4.1 from ~\cite{KeSa}, 
\[
i([P,Q]u|u) = 4\frac{h^2}{\e}\| (1 + h\e^{-1} \ph) u \|_{L^2}^2 + h(Q \beta Q u| u) + h^2 (Ru|u),
\]
where $R$ is a second order semiclassical differential operator whose coefficients are uniformly bounded in $h$ and $\e$, and $\beta = (h/\e)(1 + h\ph/\e)^{-2}$. Integration by parts gives that 
\[
h(Q \beta Q u| u) = h(\beta Qu|Qu) + \frac{2h^2}{i} ((\partial_{\nu} \ph_c) \beta Q u|u)_{\partial \Om}.
\]
We can write $Q$ at $\partial \Om$ as 
\[
Qu = ih \Lap \ph_c u + 2\partial_{\nu} \ph_c \partial_{\nu} u + (\grad \ph_c)_t \cdot ih\grad_t u
\]
where $(\grad \ph_c)_t$ and $\grad_t$ represent the tangential parts of $\grad \ph_c)$ and $\grad$, respectively.  From the boundary conditions on u, we get that $\|h \partial_{\nu} u\|_{\partial \Om} \lesssim \|u\|_{L^2(\partial \Om)}$, so 
\begin{eqnarray*}
|h(Q \beta Q u| u)| &\lesssim& \frac{h^2}{\e}\|Qu\|_{L^2}^2 + \frac{h^3}{\e}\|Qu\|_{L^2(\partial \Om)}^2 + \frac{h^3}{\e}\|u\|_{L^2(\partial \Om)}^2 \\
                    &\lesssim& \frac{h^2}{\e}\|Qu\|_{L^2}^2 + \frac{h^3}{\e}\|u\|_{H^1(\partial \Om)}^2. \\
\end{eqnarray*}
Similarly, 
\[
h^2 (Ru|u) \lesssim h^2\|u\|_{H^1}^2 + h^3\|u\|_{H^1(\partial \Om)}^2.
\]
Thus
\begin{equation}\label{iPQu}
i([P,Q]u|u) \gtrsim \frac{h^2}{\e}\|u\|_{L^2}^2 - \frac{h^2}{\e}\|Qu\|_{L^2}^2 - h^2\|u\|_{H^1}^2 - h^3\e^{-1}\|u\|_{H^1(\partial \Om)}^2
\end{equation}
for small enough $h$.  Now 
\begin{eqnarray*}
\|h\grad u\|_{L^2}^2 &=& (-h^2 \Lap u|u) + h^2(\partial_{\nu} u|u)_{\partial \Om} \\
                     &=& (-P u|u) + (|\grad \ph_c|^2 u|u) + h^2(\partial_{\nu} u|u)_{\partial \Om}.
\end{eqnarray*}
Using Cauchy-Schwartz, and invoking the boundary condition for the last term,
\[
h^2\|h\grad u\|_{L^2}^2 \lesssim \frac{1}{K}\|Pu\|_{L^2}^2+Kh^4\|u\|_{L^2}^2 + h^2\|u\|_{L^2}^2 + h^3\|u\|_{L^2(\partial \Om)}^2,
\]
or
\begin{equation}\label{Pu2}
\|Pu\|_{L^2}^2 \gtrsim Kh^2\|h\grad u\|_{L^2}^2 - K^2 h^4\|u\|_{L^2}^2 - Kh^2\|u\|_{L^2}^2 - Kh^3\|u\|_{L^2(\partial \Om)}^2.
\end{equation}
Therefore by choosing $K \sim \frac{1}{M \e}$ and taking $h$ small enough, and $M$ large enough, we get
\[
\|Pu\|^2 + \|Qu\|^2 + (i[P,Q]u|u) \gtrsim \frac{h^2}{\e}\|u\|_{H^1}^2 - \frac{h^3}{\e}\|u\|_{H^1(\partial \Om)}^2
\]
by combining \eqref{iPQu} and \eqref{Pu2}.  As a reminder, $H^1$ here indicates the semiclassical Sobolev space.  


Therefore it remains only to understand the boundary terms that remain.  Note that \eqref{BC} implies that all of the boundary terms vanish on $\Gamma$.  On $\Gamma^c$, we have 
\[
h\partial_{\nu} w = (\partial_{\nu} \ph + h \s )w,
\]
so the boundary terms are
\[
(Pw, -2h(\partial_{\nu} \ph_c) w)_{\partial \Om} -h^2(\partial_{\nu} iQw,w)_{\partial \Om} +h(iQw, (\partial_{\nu} \ph + h\s)w)_{\partial \Om}.
\]
Now we can choose $U_1, \ldots, U_N \subset \RnI$ to be an open cover of $\partial \Om$ such that we can use boundary normal coordinates on each $U_m$.  Then if $\x_1, \ldots, \x_N$ is a partition of unity subordinate to the cover $U_1, \ldots, U_N$, and $w_m = \chi_m w$, we can rewrite the boundary terms as the sum over $m$ of
\[
(Pw, -2h(\partial_{\nu} \ph_c) w_m)_{U_m \cap \partial \Om} -h^2(\partial_{\nu} iQw,w_m)_{U_m \cap \partial \Om} +h(iQw, (\partial_{\nu} \ph + h\s)w_m)_{U_m \cap \partial \Om}.
\]
Now we can employ boundary normal coordinates in each $U_m$.  Let $\Rno$ be the hyperplane on which $x_{n+1} = 0$.  We have diffeomorphisms $U_m \mapsto V_m \subset \RnI$ which map $\partial \Om \cap U_m$ to $V_m \cap \Rno$, send $\nu$ to $e_{n+1}$, and induce a metric $g_m$ on $V_m$ of the form
\[g_m = \left( \begin{array}{cccc}
  &        &   & 0      \\
  & g_{m,0}&   & \vdots \\
  &        &   & 0      \\
0 & \ldots & 0 & 1      \\ \end{array} \right)\] 

Then we can write each integral over $U_m \cap \partial \Om$ as an integral over $V_m \cap \Rno$.  If we use $w$, $\ph$, $\ph_c$, and $\s$ to refer also to their pushforwards under the coordinate map, then the last term, $h(iQw, (\partial_{\nu} \ph + h\s)w_m)_{U_m \cap \partial \Om},$ becomes
\begin{eqnarray*}
& & -2h((\partial_{n+1} \ph_c) h\partial_{n+1}w + (\partial_j \ph_c) g_{m,0}^{jk} h\partial_k w, (\partial_{n+1} \ph + h\s)w_m a_m )_{V_m \cap \Rno} \\
&-& h^2((\Lap_{g_m}\ph_c)w, (\partial_{n+1} \ph + h\s)w_m a_m )_{V_m \cap \Rno}.
\end{eqnarray*}
The second term becomes
\begin{eqnarray*}
& & 2h((\partial_{n+1} \ph_c) h^2\partial_{n+1}^2 w + (\partial_j\ph_c) g_{m,0}^{jk} h^2\partial_{k} \partial_{n+1} w, w_m a_m )_{V_m \cap \Rno} \\
&+& 2h((h\partial_{n+1}^2 \ph_c) h \partial_{n+1} w + h\partial_{n+1}(\partial_j \ph_c g_{m,0}^{jk})h\partial_k w, w_m a_m )_{V_m \cap \Rno} \\
&+& h(h(\Lap_{g_m}\ph_c)h\partial_{n+1}w + h^2\partial_{n+1}(\Lap_{g_m}\ph_c)w, w_m a_m )_{V_m \cap \Rno},
\end{eqnarray*}
and the first term becomes
\begin{eqnarray*}
& & (h^2 \partial_{n+1}^2 w + g_{m,0}^{jk}h^2\partial_j\partial_k w, -2h(\partial_{n+1} \ph_c) w_m a_m )_{V_m \cap \Rno} \\
&+& (|\grad_{g_m} \ph_c|^2 w, -2h(\partial_{n+1} \ph_c) w_m a_m )_{V_m \cap \Rno} \\
&+& (|g_m|^{-\half} h\partial_j(|g_m|^{\half} g_{m,0}^{jk})h\partial_k w, -2h(\partial_{n+1} \ph_c) w_m a_m )_{V_m \cap \Rno} \\
\end{eqnarray*}
Here $g_{m,0}^{jk}$ refer to the indices of the inverse of $g_{m,0}$, and the summation convention is used from $1$ to $n$.  $\Lap_{g_m}$ and $\grad_{g_m}$ are meant to be the Laplace-Beltrami and gradient operators, respectively, for the metric $g_m$, and $a_m =\sqrt{|g_m|}$ is the factor generated by the change of variables.

Adding these together, we get
\begin{eqnarray*}
& &-2h((\partial_{n+1} \ph) h\partial_{n+1}w, (\partial_{n+1} \ph)w_m a_m )_{V_m \cap \Rno} \\
&-&2h((\partial_j \ph_c) g_{m,0}^{jk} h\partial_k w, (\partial_{n+1} \ph)w_m a_m )_{V_m \cap \Rno} \\
&+&2h((\partial_{n+1} \ph_c) h^2\partial_{n+1}^2 w , w_m a_m )_{V_m \cap \Rno} \\
&+&2h((\partial_j\ph_c) g_{m,0}^{jk} h^2\partial_{k} \partial_{n+1} w, w_m a_m )_{V_m \cap \Rno} \\
&-&2h(h^2 \partial_{n+1}^2 w,(\partial_{n+1} \ph_c) w_m a_m )_{V_m \cap \Rno} \\
&-&2h(g_{m,0}^{jk}h^2\partial_j\partial_k w, (\partial_{n+1} \ph_c) w_m a_m )_{V_m \cap \Rno} \\
&-&2h(|\grad_{g_m} \ph_c|^2 w, (\partial_{n+1} \ph_c) w_m a_m )_{V_m \cap \Rno} \\
&+&(O(h^2) w,w_m)_{V_m \cap \Rno} + \sum_k (O(h^2) h\partial_k w, w_m)_{V_m \cap \Rno} + (O(h^2) h\partial_{n+1} w, w_m)_{V_m \cap \Rno}. \\
\end{eqnarray*}
The third and fifth terms cancel, and using the boundary conditions on $w$, the error terms in the last line can be bounded by
\begin{equation}\label{stuffbound}
O(h^2)\|w\|^2_{L^2(V_m \cap \Rno)}+ O(h)\sum_k \|h\partial_k w\|^2_{L^2(V_m \cap \Rno)}.
\end{equation}
Now integration by parts, together with the boundary conditions on $w$, shows that the second and fourth terms cancel up to this error as well.   Finally, integration by parts in the sixth term, shows that it is also bounded by \eqref{stuffbound}.  Therefore up to this error, we have
\[
-2h((\partial_{n+1} \ph)((\partial_{n+1} \ph)^2 + |\grad_{g_m} \ph|^2)w, w_m a_m )_{V_m \cap \Rno}.
\]
Translating back to $U_m \cap \partial \Om$, we have
\[
-2h((\partial_{\nu} \ph)((\partial_{\nu} \ph)^2 + |\grad \ph|^2)w, w_m)_{U_m \cap \partial \Om}.
\]
up to an error bounded by 
\[
O(h^2)\|w\|^2_{L^2(\partial \Om)}+ O(h)\|h\grad_t w\|^2_{L^2(\partial \Om)}.
\]
Adding together the boundary terms for each $U_m$, we have
\begin{eqnarray*}
& & -2h((\partial_{\nu} \ph)((\partial_{\nu} \ph)^2 + |\grad \ph|^2)w, w)_{\partial \Om} \\
&=& -2h((\partial_{\nu} \ph)((\partial_{\nu} \ph)^2 + |\grad \ph|^2)w, w)_{\Gamma^c} \\
\end{eqnarray*}
plus an error bounded by 
\[
O(h^2)\|w\|^2_{L^2(\Gamma^c)}+ O(h)\|h\grad_t w\|^2_{L^2(\Gamma^c)}.
\]

Note that since $\Gamma$ is a neighbourhood of $\partial \Om_{+}$, $-(\partial_{\nu} \ph)$ is bounded below by some positive number on $\Gamma^c$, so we can write this as
\[
2h\|\sqrt{|\partial_{\nu} \ph|((\partial_{\nu} \ph)^2 + |\grad \ph|^2)} w\|^2_{L^2(\Gamma^c)}.
\]

Therefore the equation
\begin{eqnarray*}
\| \Lphe w\|_{L^2(\Om)}^2 &=& \| Pw\|_{L^2(\Om)}^2 + \|Qw\|_{L^2(\Om)}^2 + (i[P,Q]w,w) \\
                          & & + (Pw, -2(h\partial_{\nu} \ph_c) w)_{\partial \Om} -h^2(\partial_{\nu} iQw,w)_{\partial \Om} +h^2(iQw, \partial_{\nu}w)_{\partial \Om}.
\end{eqnarray*}
becomes 
\begin{eqnarray*}
& &       \| \Lphe w\|_{L^2(\Om)}^2 + h^2\|w\|_{L^2(\Gamma^c)}^2 + h\|h\grad_t w\|_{L^2(\Gamma^c)}^2 \\
&\gtrsim& \frac{h^2}{\e}\|w\|_{H^1(\Om)} + h\|\sqrt{|\partial_{\nu} \ph|(|\grad \ph|^2 + (\partial_{\nu} \ph)^2)}w\|_{L^2(\Gamma^c)}^2 -\frac{h^3}{\e}\|u\|_{H^1(\Gamma^c)}^2.
\end{eqnarray*}
The last term on the right side can be absorbed into the boundary terms on the left side, for small enough $h$.  Then since $|\partial_{\nu} \ph|$ is bounded below by some positive number on $\Gamma^c$, for small enough $h$, the second term on the left side can be absorbed into the right side.  Therefore we end up with 
\[
\| \Lphe w\|_{L^2(\Om)}^2 + h\|h\grad_t w\|_{L^2(\Gamma^c)}^2 \gtrsim \frac{h^2}{\e}\|w\|^2_{H^1(\Om)} + h\|w\|_{L^2(\Gamma^c)}^2.
\]
The proposition now follows.

\end{proof}

\section{The Flat Case}

Now we need to make the boundary term on the left side of the inequality go away.  For the next four sections, including this one, we'll now assume that $\ph$ is the linear weight.  To differentiate the $\ph$ direction from the others, we'll choose coordinates $(x,y)$ on $\RnI$ where $x \in \Rn$ and $y \in \R$, and choose $\ph(x,y) = y$.  

In order to understand the basic idea of the rest of the argument, we'll present it first in the case where $q=0$, $\Gamma^c$ is contained in the hypersurface $\{y = 0\}$, and
\[
\Om \subset \RnIp = \{(x,y) \in \Rn \times \R | y \geq 0 \},
\]
and the function $\s$ that appears in \eqref{BC} is zero.  
We'll let $\Rno$ denote the boundary of $\RnIp$.  By the methods used to prove Proposition \ref{preCarl}, we can get 
\begin{equation}\label{simpleCarl}
h^{\half}\|w\|_{L^2(\Gamma^c)} + h\|w\|_{H^1(\Om)} \lesssim \|\Lph w \|_{L^2(\Om)} + h^{\half} \|h \grad_x w\|_{L^2(\Gamma^c)}.
\end{equation}
for $w \in H^2(\Om)$ satisfying \eqref{BC}.  Now suppose $w \in C^{\infty}(\overline{\Om})$ such that $w$ satisfies \eqref{BC} with $\s=0$, and $w \equiv 0$ in a neighbourhood of $\Gamma$.  Then we can extend $w$ by $0$ to the rest of $\RnIp$ to obtain a function in $\S(\RnIp)$, defined as the space of restrictions to $\RnIp$ of Schwartz functions on $\RnI$.  Then we can write
\[
h^{\half}\|w\|_{L^2(\Rno)} + h\|w\|_{H^1(\RnIp)} \lesssim \|\Lph w \|_{L^2(\RnIp)} + h^{\half} \|w\|_{\dot{H}^1(\Rno)}.
\]
Now we can take the Fourier transform in the $x$ variables only.  We'll use the notation $\hat{w} = \hat{w}(\xi,y)$ to denote the semiclassical Fourier transform of $w$ in the $x$ variables.  

$\Lph w$ can be written out as 
\[
\Lph w = h^2\partial_y^2 w - 2h\partial_y w + (1 + h^2 \Lap_x)w,
\]
so taking Fourier transforms in the $x$ variables gives 
\begin{equation*}
\begin{split}
\widehat{\Lph w} &= (h^2\partial_y^2 - 2 h \partial_y + 1 - |\xi|^2) \hat{w} \\
                 &= (h\partial_y - (1 + |\xi|))(h\partial_y - (1 - |\xi|)) \hat{w} \\
\end{split}
\end{equation*}
Let $T_{\psi}$ denote the operator obtained by using $\psi$ as a Fourier multiplier.  Then we can express this as
\[
\Lph w = (h\partial_y - T_{1+|\xi|})(h\partial_y - T_{1-|\xi|})w.
\]

Now the operator $(h\partial_y - T_{1+|\xi|})$ has the property that 
\[
\|(h\partial_y - T_{1+|\xi|})v\|_{L^2(\RnIp)} \simeq \|v\|_{H^1(\RnIp)}.
\]
(For a proof, see Lemma \ref{bddness}.)  Therefore
\[
\| \Lph w \|_{L^2(\RnIp)} \simeq \|(h\partial_y - T_{1-|\xi|})w\|_{H^1(\RnIp)}.
\]
By the semiclassical trace theorem (see below), 
\[
\|(h\partial_y - T_{1-|\xi|})w\|_{H^1(\RnIp)} \gtrsim h^{\half}\|(h\partial_y - T_{1-|\xi|})w\|_{L^2(\Rno)}.
\]
Now by \eqref{BC}, $w$ satisfies the boundary condition $h\partial_y w = w$ on $\Rno$.  Therefore
\begin{equation*}
\begin{split}
\|(h\partial_y - T_{1-|\xi|})w\|_{L^2(\Rno)} &= \|w - T_{1-|\xi|}w\|_{L^2(\Rno)} \\
                                             &= \|T_{|\xi|}w\|_{L^2(\Rno)} \\
                                             &\simeq \|w\|_{\dot{H}^1(\Rno)} \\
\end{split}
\end{equation*}
Putting this all together gives that 
\[
h^{\half}\|w\|_{\dot{H}^1(\Rno)} \lesssim \| \Lph w\|_{L^2(\RnIp)},
\]
and therefore, by \eqref{simpleCarl}, we end up with
\[
h^{\half}\|w\|_{H^1(\Gamma^c)} + h\|w\|_{H^1(\Om)} \lesssim \|\Lph w \|_{L^2(\Om)}
\]
for any $w \in C^{\infty}(\overline{\Om})$ satisfying \eqref{BC}, where $w \equiv 0$ in a neighbourhood of $\Gamma$.  A density argument then finishes the proof.  

For completeness, we give a short proof of the semiclassical trace formula mentioned above, based on Lemma \ref{bddness}.
\begin{lemma}
Let $v \in \S(\RnIp)$.  Then the map $v(x,y) \mapsto v(x,0)$ extends to a bounded map $\tau: H^1(\RnIp) \rightarrow L^2(\Rno)$ with
\[
h^{\half}\|\tau(v)\|_{L^2(\Rno)} \lesssim \|v\|_{H^1(\RnIp)}.
\]
\end{lemma}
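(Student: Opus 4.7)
The plan is to leverage Lemma \ref{bddness} to trade the $H^1(\RnIp)$ norm of $v$ for the $L^2(\RnIp)$ norm of the ``factor'' $u := (h\partial_y - T_{1+|\xi|})v$, and then recover the boundary value of $v$ from $u$ by solving the associated first-order ODE in $y$. The essential point is that the Fourier symbol $1+|\xi|$ is strictly positive and uniformly bounded below, so the homogeneous solutions $e^{(1+|\xi|)y/h}$ grow exponentially in $y$; combined with the Schwartz decay of $v$ at infinity, this pins down $v(\cdot,0)$ in terms of $u$ on $\RnIp$, which is exactly the kind of setup we want in order to extract a trace estimate.

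Concretely, I would first take the semiclassical Fourier transform in the $x$ variables, turning the definition of $u$ into the scalar ODE $h\partial_y \hat v - (1+|\xi|)\hat v = \hat u$ for each fixed $\xi$. Multiplying by the integrating factor $e^{-(1+|\xi|)y/h}$ and integrating from $0$ to $\infty$, together with the decay $\hat v(\xi,y) \to 0$ as $y \to \infty$, gives
\[
\hat v(\xi,0) = -h^{-1}\int_0^\infty e^{-(1+|\xi|)s/h}\,\hat u(\xi,s)\,ds.
\]
Cauchy--Schwarz applied pointwise in $\xi$, using the elementary identity $\int_0^\infty e^{-2(1+|\xi|)s/h}\,ds = h/(2(1+|\xi|))$, produces
\[
h(1+|\xi|)\,|\hat v(\xi,0)|^2 \;\lesssim\; \|\hat u(\xi,\cdot)\|^2_{L^2(0,\infty)}.
\]
Integrating in $\xi$, applying Plancherel on both sides, and invoking Lemma \ref{bddness} then yields
\[
h\,\|v(\cdot,0)\|^2_{L^2(\Rno)} \;\lesssim\; \|u\|^2_{L^2(\RnIp)} \;\simeq\; \|v\|^2_{H^1(\RnIp)}.
\]
A standard density argument extends $\tau$ to a bounded map $H^1(\RnIp) \to L^2(\Rno)$ with the claimed $h^{\half}$-scaled bound.

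I do not expect any substantive obstacle; the only item requiring care is bookkeeping the semiclassical normalization constants when passing through Plancherel, which does not affect the $h^{\half}$ scaling that emerges naturally from the $h/(1+|\xi|)$ factor in the exponential integral above.
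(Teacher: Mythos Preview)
Your proof is correct, but it takes a somewhat different route from the paper's. The paper argues by duality: it fixes $v$, builds an auxiliary function $u$ with $\hat u(\xi,y)=-\langle\xi\rangle\,\hat v(\xi,0)\,e^{-\langle\xi\rangle y/h}$, and computes the pairing $(u,(h\partial_y-T_{\langle\xi\rangle})v)_{\RnIp}$ via integration by parts; the boundary term recovers $h\|v\|_{H^{1/2}(\Rno)}^2$, and Cauchy--Schwarz together with Lemma~\ref{bddness} finishes. Your argument is more direct: you set $u=(h\partial_y-T_{1+|\xi|})v$, solve the resulting first-order ODE in $y$ by an integrating factor to express $\hat v(\xi,0)$ as an explicit integral of $\hat u$, and then apply Cauchy--Schwarz pointwise in $\xi$. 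Both proofs ultimately rest on Lemma~\ref{bddness} (your operator is exactly $-J^*$ for $F=1+|\xi|$, so the isomorphism statement there justifies the $\simeq$ you wrote). Your approach is arguably more elementary; as a bonus, your intermediate inequality $h(1+|\xi|)|\hat v(\xi,0)|^2\lesssim\|\hat u(\xi,\cdot)\|_{L^2}^2$ already encodes the sharper $H^{1/2}(\Rno)$ bound the paper obtains, though you only need $L^2$ for the stated lemma.
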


\begin{proof}
Suppose $v \in \S(\RnIp)$.  Define a function $u$ on $\RnIp$ by
\[
\hat{u}(\xi,y) = -\langle \xi \rangle \hat{v}(\xi,0)e^{-\frac{\langle \xi \rangle y}{h}}.
\]
Then $u \in L^2(\RnIp)$, with
\begin{equation}\label{ufunction}
\|u\|_{L^2(\RnIp)} \lesssim h^{\half}\| v \|_{H^{\half}(\Rno)}.
\end{equation}
Now 
\[
(u, (h\partial_y - T_{\langle \xi \rangle})v)_{\RnIp} = ((-h\partial_y - T_{\langle \xi \rangle})u, v)_{\RnIp} - h(u,v)_{\Rno}.
\]
by integration by parts.  Now by definition of $u$, we have that $(h\partial_y + T_{\langle \xi \rangle})u = 0$ and $-(u,v)_{\Rno} \simeq \|v\|^2_{H^{\half}(\Rno)}$.  Therefore
\[
(u, (h\partial_y - T_{\langle \xi \rangle})v)_{\RnIp} \simeq h\|v\|^2_{H^{\half}(\Rno)}.
\]
Then Cauchy-Schwarz gives
\[
\|u\|_{L^2(\RnIp)}\|(h\partial_y - T_{\langle \xi \rangle})v\|_{L^2(\RnIp)} \gtrsim h\|v\|^2_{H^{\half}(\Rno)}.
\]
By Lemma \ref{bddness}, 
\[
\|u\|_{L^2(\RnIp)}\|v\|_{H^1(\RnIp)} \gtrsim h \|v\|^2_{H^{\half}(\Rno)}.
\]
Dividing through by $\|u\|_{L^2(\RnIp)}$, and using \eqref{ufunction}, gives the desired result.  

\end{proof}

To make this idea work in the general case, we'll first concentrate on the case where $\Gamma^c$ is contained in a graph $\{ y = f(x)\}$, with $\Om \subset \{y > f(x)\}$, and $\grad f$ is close to a constant $K$.  Then we can change variables to flatten out $\Gamma^c$ , and attempt to carry out the program above.  The change of variables has the effect of perturbing the operator $\Lph$, so the factoring becomes more delicate, but the argument can still be carried through.  Finally, these graph estimates can be glued together to give Theorem \ref{mainCarl}.

\section{The Operators}

First however, we should introduce a family of operators for use in the linear case.  Suppose $F(\xi)$ is a complex valued function on $\Rn$, with the properties that $|F(\xi)|, \mathrm{Re} F(\xi) \simeq 1 + |\xi|$.  

Then for $u \in \S(\RnIp)$, define $J$ by
\[
\widehat{Ju}(\xi,y) = (F(\xi) + h\partial_y)\hat{u}(\xi,y).
\]
This has adjoint $J^*$ defined by 
\[
\widehat{J^*u}(\xi,y) = (\overline{F}(\xi) - h\partial_y)\hat{u}(\xi,y).
\]
These operators have right inverses given by
\[
\widehat{J^{-1}u} = \frac{1}{h}\int_0^y \hat{u}(\xi,t) e^{F(\xi)\frac{t-y}{h}}dt
\]
and
\[
\widehat{J^{*-1}u} = \frac{1}{h}\int_y^\infty \hat{u}(\xi,t) e^{\overline{F}(\xi)\frac{y-t}{h}}dt.
\]
Now we have the following boundedness result.

\begin{lemma}\label{bddness}
The operators $J$, $J^{*}$, $J^{-1}$, and $J^{*-1}$, initially defined on $\S(\RnIp)$, extend to bounded operators
\[
J, J^{*}: H^1(\RnIp) \rightarrow L^2(\RnIp)
\]
and
\[
J^{-1}, J^{*-1}: L^2(\RnIp) \rightarrow H^1(\RnIp)
\]
Moreover, these extensions for $J^{*}$ and $J^{*-1}$ are isomorphisms.
\end{lemma}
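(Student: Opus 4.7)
The plan is to verify the four stated mapping properties in turn and then establish the isomorphism assertion by checking that $J^{*-1}$ is both a right and a left inverse for $J^*$.

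For the boundedness $J, J^* \colon H^1(\RnIp) \to L^2(\RnIp)$, observe that in the Fourier variables $\xi$ dual to $x$, $J$ acts by $F(\xi) + h\pd_y$. Since $|F(\xi)| \lesssim \langle \xi \rangle$, Plancherel in $x$ shows that $u \mapsto F(\xi)\hat{u}$ is bounded from $H^1(\RnIp)$ to $L^2(\RnIp)$ with norm $\lesssim 1$, and $h\pd_y$ is a semiclassical first-order derivative and thus bounded in the same way. The argument for $J^*$ is identical.

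For the boundedness of $J^{-1}$ (and analogously $J^{*-1}$) from $L^2(\RnIp)$ to $H^1(\RnIp)$, by Plancherel in $x$ it suffices to prove the pointwise-in-$\xi$ bound
\[
\|\widehat{J^{-1}u}(\xi, \cdot)\|_{L^2_y(\R_+)} \lesssim \langle \xi \rangle^{-1} \|\hat{u}(\xi, \cdot)\|_{L^2_y(\R_+)}.
\]
The defining formula exhibits $\widehat{J^{-1}u}(\xi, \cdot)$ as the convolution on $\R_+$ of $\hat{u}(\xi, \cdot)$ with the one-sided exponential kernel $\tfrac{1}{h}\mathbf{1}_{\{t<y\}} e^{F(\xi)(t-y)/h}$, whose $L^1_y$ norm equals $(\mathrm{Re}\, F(\xi))^{-1} \lesssim \langle \xi \rangle^{-1}$. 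Young's inequality then yields the displayed estimate, controlling the $\langle \xi \rangle$-weighted portion of the semiclassical $H^1$ norm. To control the $h\pd_y$ portion, note that $JJ^{-1} = I$ holds by construction, so $h\pd_y J^{-1}u = u - F(\xi) J^{-1}u$, and both summands lie in $L^2(\RnIp)$ by the preceding step.

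For the isomorphism statement, $J^* J^{*-1} = I$ follows directly from the defining formulas, so it remains only to show $J^{*-1} J^* = I$, which by density of $\S(\RnIp)$ can be verified on Schwartz functions. On the Fourier side,
\[
\widehat{J^{*-1} J^* v}(\xi, y) = \tfrac{1}{h} \int_y^{\infty} \bigl( \overline{F}(\xi) - h \pd_t \bigr) \hat{v}(\xi, t) \, e^{\overline{F}(\xi)(y-t)/h} \, dt,
\]
and integrating by parts in the $-h\pd_t$ term produces a boundary value $\hat{v}(\xi, y)$ at $t = y$, together with an interior contribution that exactly cancels the $\overline{F}(\xi)$ integral. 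The boundary term at $t = \infty$ vanishes because $\mathrm{Re}\, \overline{F}(\xi) > 0$ forces exponential decay of the kernel. The main subtlety, and the reason only $J^*$ and $J^{*-1}$ are asserted to be isomorphisms, is the sidedness of the two kernels: the analogous integration by parts for $J^{-1} J v$ produces an uncancelled boundary contribution $\hat{v}(\xi, 0) e^{-F(\xi)y/h}$ at $t = 0$, corresponding to the nontrivial kernel of $J$ on $H^1(\RnIp)$. Apart from keeping track of this one-sided boundary term, no step presents a real difficulty.
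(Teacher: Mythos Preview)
Your proof is correct and follows essentially the same approach as the paper's: Plancherel for the forward operators, a convolution estimate for the inverses, the identity $h\pd_y J^{-1}u = u - T_F J^{-1}u$ to recover the normal derivative, and integration by parts for $J^{*-1}J^* = I$. The only cosmetic difference is that you invoke Young's inequality for the one-sided exponential kernel where the paper unwinds the same bound via Minkowski's integral inequality; the content is identical, and your remark explaining why the boundary term at $t=0$ obstructs $J^{-1}J = I$ is a nice addition.
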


\begin{proof}
Consider $J$ first, and suppose $u \in \S(\RnIp)$.
\begin{equation*}
\begin{split}
\|Ju\|^2_{L^2(\RnIp)} &\simeq h^{-n}\|\widehat{Ju}\|^2_{L^2(\RnIp)} \\
                    &\leq h^{-n}\|h\partial_y \hat{u}\|^2_{L^2(\RnIp)} + h^{-n}\|F(\xi)\hat{u}\|^2_{L^2(\RnIp)} \\
                    &\lesssim h^{-n}\|h\partial_y \hat{u}\|^2_{L^2(\RnIp)} + h^{-n}\|(1 + |\xi|) \hat{u}\|^2_{L^2(\RnIp)} \\
                    &\lesssim \|u\|^2_{H^1(\RnIp)} \\
\end{split}
\end{equation*}
By a density argument, $J$ now extends to a bounded operator $J: H^1(\RnIp) \rightarrow L^2(\RnIp)$.  The argument for $J^{*}$ is similar.

Now consider $J^{-1}$.  If $u \in \S(\RnIp)$,
\begin{equation*}
\begin{split}
\|\widehat{J^{-1}u}\|^2_{L^2(\RnIp)} &=    \frac{1}{h^2}\int_{\Rn} \int_0^\infty \left| \int_0^y \hat{u}(\xi,t) e^{F(\xi)\frac{t-y}{h}}dt \right|^2 dy\, d\xi. \\
                                     &=    \frac{1}{h^2}\int_{\Rn} \int_0^\infty \left| \int_{-\infty}^y \hat{u}(\xi,t) e^{F(\xi)\frac{t-y}{h}}dt \right|^2 dy\, d\xi. \\
                                     &\leq \frac{1}{h^2}\int_{\Rn} \int_{-\infty}^\infty \left| \int_{-\infty}^0 \hat{u}(\xi,t+y) e^{F(\xi)\frac{t}{h}}dt \right|^2 dy\, d\xi. \\
\end{split}                                     
\end{equation*}
Then by Minkowski's inequality,
\[
\|\widehat{J^{-1}u}\|^2_{L^2(\RnIp)} \leq \frac{1}{h^2}\int_{\Rn} \left( \int_{-\infty}^0 \left( \int_{-\infty}^\infty |\hat{u}(\xi,t+y)|^2 e^{2\mathrm{Re} F(\xi)\frac{t}{h}}dy \right)^{\half}dt\right)^2  d\xi.
\]
Therefore
\begin{equation*}
\begin{split}
\|\widehat{J^{-1}u}\|^2_{L^2(\RnIp)} &\leq \frac{1}{h^2}\int_{\Rn} \left( \int_{-\infty}^\infty |\hat{u}(\xi,y)|^2 dy \left( \int_{-\infty}^0 e^{\mathrm{Re} F(\xi)\frac{t}{h}}dt \right)^2 \right) d\xi \\
                                     &=  \int_{\Rn} \int_{-\infty}^\infty \left| \frac{\hat{u}(\xi,y)}{\mathrm{Re} F(\xi)} \right|^2 dy \, d\xi \\
                                     &\lesssim \|\hat{u}\|^2_{L^2(\RnIp)}. \\
\end{split}                                     
\end{equation*}
Similarly, 
\[
\|\xi_j \widehat{J^{-1}u}\|^2_{L^2(\RnIp)} \lesssim \|\hat{u}\|^2_{L^2(\RnIp)}.
\]
Finally,
\[
h\partial_y \widehat{J^{-1}u} = -F(\xi)\widehat{J^{-1}u} + \hat{u}
\]
so
\[
\|h\partial_y \widehat{J^{-1}u}\|^2_{L^2(\RnIp)} \lesssim \|\hat{u}\|^2_{L^2(\RnIp)} 
\]
also. Putting all of this together gives
\[
\|J^{-1}u\|_{H^1(\RnIp)} \leq \|u\|_{L^2(\RnIp)}
\]
as desired, and a density argument shows that $J^{-1}$ extends to a bounded operator from $L^2(\RnIp)$ to $H^1(\RnIp)$.  The proof for $J^{*-1}$ is similar.  

Now we need to show that the extensions for $J^{*}$ and $J^{*-1}$ are isomorphisms.  To do this, note that if $u \in \S(\RnIp)$, then
\[
J^{*} J^{*-1} u = u,
\]
and (using integration by parts)
\[
J^{*-1} J^{*} u = u.  
\]
Then a density argument finishes the proof.  
\end{proof}

Note that similar mapping properties hold between $H^1(\RnIp)$ and $H^2(\RnIp)$, by the same reasoning.  

We'll need to record one more operator fact in this section.  

Let $m,k \in \mathbb{Z}$, with $m,k\geq 0$.  Suppose $a(x,\xi,y)$ are smooth functions on $\Rn \times \Rn \times \R$ that satisfy the bounds
\[
\|\partial_x^\b \partial_\xi^{\a} \partial_y^j a(x,\xi,y)\| \leq C_{\a,\b} (1 + |\xi|)^{m-|\a|}
\]
for all multiindices $\a$ and $\b$, and for $0 \leq j \leq k$.  In other words, each $\partial_y^j a(x,\xi,y)$ is a symbol on $\Rn$ of order $m$, with bounds uniform in $y$, for $0 \leq j \leq k$.  Then we can define an operator $A$ on Schwartz functions in $\RnI$ by applying the pseudodifferential operator on $\Rn$ with symbol $a(x,\xi,y)$, defined by the Kohn-Nirenberg quantization, to $f(x,y)$ for each fixed $y$.  More generally, we can also define operators $A_j$ on Schwartz functions in $\RnI$ by applying the pseudodifferential operator on $\Rn$ with symbol $\partial_y^j a(x,\xi,y)$ to $f(x,y)$ for each fixed $y$, for $1 \leq j \leq k$.  

\begin{lemma}\label{flatops}
If $A$ is as above, then $A$ extends to a bounded operator from $H^{k+m}(\RnI)$ to $H^k(\RnI)$.
\end{lemma}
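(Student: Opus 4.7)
The plan is to prove Lemma \ref{flatops} by induction on $k$, with the base case reducing to the standard $L^2$-boundedness of pseudodifferential operators of order $m$ applied slicewise in $y$.

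For $k = 0$, the hypothesis says that for each fixed $y \in \R$, the function $a(\cdot,\cdot,y)$ is a classical symbol of order $m$ on $\Rn \times \Rn$, with all of the relevant seminorm bounds uniform in $y$. By the standard $L^2$-boundedness theorem for pseudodifferential operators (see \cite{Z}), the sliced operator with symbol $a(\cdot,\cdot,y)$ maps $H^m(\Rn)$ into $L^2(\Rn)$, with operator norm controlled by finitely many of those seminorms and hence independent of $y$. Squaring and integrating in $y$ then yields $\|Af\|_{L^2(\RnI)} \lesssim \|f\|_{H^m(\RnI)}$.

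For the inductive step, I would assume the result at level $k-1$ for every symbol satisfying the hypotheses. Since $A$ acts only in the $x$ variables for each fixed $y$, differentiating under the integral gives the identities
\[
\pd_{x_\ell}(Af) = A(\pd_{x_\ell} f) + A_{(\ell)} f, \qquad \pd_y(Af) = A(\pd_y f) + A_1 f,
\]
where $A_{(\ell)}$ and $A_1$ denote the operators with symbols $\pd_{x_\ell} a$ and $\pd_y a$, respectively. Each of these new symbols again satisfies the order-$m$ hypothesis: $\pd_{x_\ell} a$ still has $y$-derivatives up to order $k$, while $\pd_y a$ has $y$-derivatives up to order $k-1$. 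The inductive hypothesis therefore bounds every term on the right in $H^{k-1}(\RnI)$ by $\|f\|_{H^{m+k}(\RnI)}$. Summing over $\ell = 1,\ldots,n$ produces $\|Af\|_{H^k(\RnI)} \lesssim \|f\|_{H^{m+k}(\RnI)}$, and a density argument extends the inequality from $\S(\RnI)$ to all of $H^{m+k}(\RnI)$.

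The only nontrivial ingredient is the base case, which is the standard $L^2$-boundedness theorem for pseudodifferential operators; everything else is routine commutator bookkeeping. The hypothesis that $\pd_y^j a$ is a symbol of order $m$ uniformly in $y$ for $0 \leq j \leq k$ is precisely what is needed for the induction to close after $k$ steps, since each step consumes one normal derivative from the symbol.
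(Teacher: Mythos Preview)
Your proof is correct and follows essentially the same approach as the paper: both reduce to the slicewise $H^m(\Rn)\to L^2(\Rn)$ boundedness of the operators $A_{j}$ (uniformly in $y$) together with product-rule bookkeeping for the $y$-derivatives. The only cosmetic difference is that the paper expands the $H^k$ norm directly and handles the $x$-derivatives by invoking $H^{|\a|+m}(\Rn)\to H^{|\a|}(\Rn)$ boundedness on each slice, whereas you organize the same computation as an induction on $k$ and commute each $\pd_{x_\ell}$ through $A$ explicitly.
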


\begin{proof}
Suppose $f \in \S(\RnI)$.  Since $k \in \mathbb{Z}$, $k \geq 0$, 
\[
\| Af \|^2_{H^k(\R^{n+1})} = \sum_{0 \leq |\a| + j \leq k} \|h^{|\a| + j}\partial^{\a}_x \partial^j_y Af \|^2_{L^2(\R^{n+1})}.
\]
Now $\partial_y^j A(f)$ is a sum of terms of the form 
\[
A_{j_1}\partial_y^{j_2} f
\]
where $j_1 + j_2 = j \leq k$.  Therefore $\| Af \|^2_{H^k(\R^{n+1})}$ is bounded by a sum of terms of the form
\[
\|h^{|\a| + j_1 + j_2}\partial^{\a}_x A_{j_1} \partial^{j_2}_y f \|^2_{L^2(\R^{n+1})},
\]
where $|\a|+j_1+j_2 \leq k$.  Then 
\begin{equation*}
\begin{split}
\|h^{|\a| + j_1 + j_2}\partial^{\a}_x A_{j_1} \partial^{j_2}_y f \|^2_{L^2(\R^{n+1})} &= \int_{\R} \int_{\Rn} |h^{|\a| + j_1 + j_2}\partial^{\a}_x A_{j_1} \partial^{j_2}_y f|^2 dx \, dy \\
&\leq \int_{\R} \|h^{j_1 + j_2} A_{j_1} \partial^{j_2}_y f\|_{H^{|\a|}(\Rn)}^2 dy \\
\end{split}
\end{equation*}
Then by the boundedness of $A_{j_1}$, this is bounded above by
\[
\int_{\R} \|h^{j_2}\partial^{j_2}_y f\|_{H^{|\a|+m}(\Rn)}^2 dy,
\]
which in turn is bounded above by
\begin{equation*}
\begin{split}
\|h^{j_2}\partial^{j_2}_y f\|_{H^{|\a|+m}(\RnI)}^2 &\leq \|f\|_{H^{|\a|+m+j_2}(\RnI)}^2 \\
                                                   &\leq \|f\|_{H^{k+m}(\RnI)}^2
\end{split}                                                   
\end{equation*}
Therefore
\[
\| Af \|^2_{H^k(\R^{n+1})} \lesssim \|f\|_{H^{k+m}(\RnI)}^2.
\]
Then a density argument finishes the proof.

\end{proof}

\section{The Linear Graph Case}


Suppose $f \in \Czinf(\Rn)$.  In this section, we'll examine the case where $\Om$ lies in the set $\{ y \geq f(x)\}$, and $\Gamma^c$ lies in the graph $\{ y = f(x) \}$.  

Here we can do the change of variables $(x,y) \mapsto (x, y - f(x))$.  Define $\tilde{\Om}$ and $\tilde{\Gamma}$ to be the images of $\Om$ and $\Gamma$ respectively, under this map. Note that $\{ y \geq f(x)\}$ maps to $\RnIp$, and $\Gamma^{c}$ maps to a subset of $\Rno$.  

For $\tilde{\Om}$ we can obtain the following proposition.  

\begin{prop}\label{tildepreCarl}
Suppose $w \in H^2(\Omt)$, and  
\begin{equation}\label{tildeBC}
\begin{split}
w, \partial_{\nu} w &= 0 \mbox{ on } \tilde{\Gamma} \\
h \partial_{y}w|_{\tilde{\Gamma}^c} &= \frac{w + \grad f \cdot h \grad_{x} w - h\s w}{1 + |\grad f|^2}.
\end{split}
\end{equation}
where $\s$ is smooth and bounded on $\Omt$.  
Then
\[
h^{\half}\|w\|_{L^2(\tilde{\Gamma}^c)} + \frac{h}{\sqrt{\e}}\|w\|_{H^1(\Omt)} \lesssim \|\Lphet w \|_{L^2(\Omt)} + h^{\half} \|h\grad_x w\|_{L^2(\tilde{\Gamma}^c)}
\]
where
\[
\Lphet = (1+|\grad f|^2)h^2\partial_y^2 - 2(\a + \grad f \cdot h\grad_x)h\partial_y + \a^2 + h^2\Lap_x
\]
and $\a = 1 + \frac{h}{\e}(y + f(x))$.  Note that on $\Omt$, $\a$ is very close to $1$.
\end{prop}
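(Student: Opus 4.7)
The plan is to pull Proposition \ref{preCarl} back through the smooth diffeomorphism $\Phi:\Om\to\Omt$, $\Phi(x,y)=(x,y-f(x))$, which flattens $\Gamma^c\subset\{y=f(x)\}$ onto a subset of $\Rno$. Given $w\in H^2(\Omt)$ satisfying \eqref{tildeBC}, I set $u=w\circ\Phi\in H^2(\Om)$ and first check that $u$ satisfies \eqref{BC}. On $\Gamma^c$, the outward unit normal to $\Om$ is $\nu=(1+|\grad f|^2)^{-1/2}(\grad f,-1)$, so $\partial_\nu\ph=-(1+|\grad f|^2)^{-1/2}$. The chain rule $\partial_y=\partial_{\tilde y}$, $\partial_{x_j}=\partial_{\tilde x_j}-(\partial_{x_j}f)\partial_{\tilde y}$ gives
\[
\partial_\nu u=\frac{1}{\sqrt{1+|\grad f|^2}}\bigl(\grad f\cdot\grad_{\tilde x}w-(1+|\grad f|^2)\partial_{\tilde y}w\bigr),
\]
and substituting the expression for $h\partial_{\tilde y}w$ from \eqref{tildeBC} reduces this to $h\partial_\nu u=(\partial_\nu\ph+h\tilde\sigma)u$ on $\Gamma^c$, with $\tilde\sigma=\sqrt{1+|\grad f|^2}\,\sigma$, which is smooth and bounded independently of $h$. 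Hence $u$ satisfies \eqref{BC}.

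A second chain-rule computation establishes the operator identity
\[
\Lphe u=\Bigl(\Lphet w-h(\Lap f)h\partial_{\tilde y}w-\tfrac{h^2}{\e}w\Bigr)\circ\Phi,
\]
where the last two terms form a first-order semiclassical remainder with $L^2(\Omt)$ norm bounded by $Ch\|w\|_{H^1(\Omt)}$ (using $\e\leq 1$). Because $f\in\Czinf(\Rn)$, the diffeomorphism $\Phi$ has smooth and uniformly bounded derivatives with unit Jacobian, so it induces equivalences of $L^2$ and $H^1$ norms on the interior, and of $L^2$ norms on the boundary (with smooth and bounded density $\sqrt{1+|\grad f|^2}$). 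Tangent vectors to $\Gamma^c$ of the form $e_j+(\partial_{x_j}f)e_{n+1}$ push forward under $\Phi$ to the coordinate vectors $e_j$ on $\tilde\Gamma^c$, so the tangential gradient $\grad_t u$ on $\Gamma^c$ corresponds, up to a bounded invertible linear transformation, to $\grad_{\tilde x}w$ on $\tilde\Gamma^c$.

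Applying Proposition \ref{preCarl} to $u$ on $\Om$ and transferring every norm via $\Phi$ then yields
\[
h^{\half}\|w\|_{L^2(\tilde\Gamma^c)}+\frac{h}{\sqrt\e}\|w\|_{H^1(\Omt)}\lesssim\|\Lphet w\|_{L^2(\Omt)}+h\|w\|_{H^1(\Omt)}+h^{\half}\|h\grad_{\tilde x}w\|_{L^2(\tilde\Gamma^c)}.
\]
For $\e$ at most a fixed small constant (the usual Carleman regime), the middle term on the right is absorbed into $(h/\sqrt\e)\|w\|_{H^1(\Omt)}$ on the left, giving the stated estimate.

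The main obstacle I anticipate is the careful bookkeeping of the lower-order remainders produced by the change of variables and the confirmation that each one is strictly dominated by the left-hand side of Proposition \ref{preCarl}; the linear case is especially favorable because the convexifier provides the $h/\sqrt\e$ gain on the interior norm which swallows the $O(h)$ error from the transformed Laplacian. Once one commits to the chain-rule computation, the identification of the transformed boundary condition and the norm equivalences are routine, because $f\in\Czinf(\Rn)$ guarantees that all the geometric factors $(1+|\grad f|^2)^{\pm 1/2}$, $\Lap f$, and so on are smooth and uniformly bounded.
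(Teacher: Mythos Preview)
Your proposal is correct and follows essentially the same route as the paper: define $v=w\circ\Phi$, check that $v$ satisfies \eqref{BC}, apply Proposition~\ref{preCarl}, transport the norms, and absorb the $O(h)$ first-order remainder using the $h/\sqrt{\e}$ interior gain. One small slip: your computation actually gives $\tilde\sigma=\sigma/\sqrt{1+|\grad f|^2}$ rather than $\sqrt{1+|\grad f|^2}\,\sigma$, but since all that matters is that $\tilde\sigma$ is smooth and bounded independently of $h$, this does not affect the argument.
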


\begin{proof}
Suppose $w \in H^2(\Omt)$ satisfies \eqref{tildeBC}.  Let $v$ be the function on $\Om$ defined by $v(x,y) = w(x, y - f(x))$.  Then $v \in H^2(\Om)$, and $v$ satisfies \eqref{BC}.  Therefore by Proposition \ref{preCarl}, 
\[
h^{\half}\|v\|_{L^2(\Gamma^c)} + \frac{h}{\sqrt{\e}}\|v\|_{H^1(\Om)} \lesssim \|\Lphe v \|_{L^2(\Om)} + h^{\half} \|h\grad_t v\|_{L^2(\Gamma^c)}.
\]
Now by a change of variables,
\begin{equation*}
\begin{split}
\|v\|_{L^2(\Gamma^c)} &\simeq \|w\|_{L^2(\tilde{\Gamma}^c)}, \\
\|v\|_{H^1(\Om)} &\simeq \|w\|_{H^1(\Omt)}, \\
\end{split}
\end{equation*}
and
\[
\|h\grad_t v\|_{L^2(\Gamma^c)} \simeq \|h\grad_x w\|_{L^2(\tilde{\Gamma}^c)}.
\]
Moreover, 
\[
\left( \Lphe v \right)(x, y + f(x)) = \Lphet \left( w(x,y) \right) + hE_1 w(x,y)
\]
where $E_1$ is a first order semiclassical differential operator.  Therefore by a change of variables,
\[
\|\Lphe v \|_{L^2(\Om)} \lesssim \|\Lphet w \|_{L^2(\Omt)} + h\|w\|_{H^1(\Omt)}.
\]
Putting this all together gives
\[
h^{\half}\|w\|_{L^2(\tilde{\Gamma}^c)}+\frac{h}{\sqrt{\e}}\|w\|_{H^1(\Omt)} \lesssim \|\Lphet w\|_{L^2(\Omt)}+h^{\half} \|h\grad_x w\|_{L^2(\tilde{\Gamma}^c)}+ h\|w\|_{H^1(\Omt)}.
\]
For sufficiently small $\e$, the last term on the right side can be absorbed into the left side to give
\[
h^{\half}\|w\|_{L^2(\tilde{\Gamma}^c)}+\frac{h}{\sqrt{\e}}\|w\|_{H^1(\Omt)} \lesssim \|\Lphet w\|_{L^2(\Omt)}+h^{\half} \|h\grad_x w\|_{L^2(\tilde{\Gamma}^c)}
\]
as desired.  

\end{proof}

We need to find a way to bound the last term in the inequality by the other terms.  To do this, we'll split the last term into two separate parts, a small frequency and large frequency part.  

To simplify matters, we'll assume for the rest of this section that there are constants $K \in \Rn$ and $\d > 0$ such that $|\grad f(x) - K| < \d$ for all $x$ such that some $(x,y) \in \Om$.

Now, choose $m_2 > m_1 > 0$, and $\mu_1$ and $\mu_2$ such that 
\[
\frac{|K|}{\sqrt{1+|K|^2}} < \mu_1 < \mu_2 < \half + \frac{|K|}{2\sqrt{1+|K|^2}} < 1.
\]
The eventual choice of $\mu_j$ and $m_j$ will depend only on $K$.  

Define $\rho \in \Czinf(\Rn)$ such that $\rho(\xi) = 1$ if $|\xi| < \mu_1$ and $|K \cdot \xi| < m_1$, and $\rho(\xi) = 0$ if $|\xi| > \mu_2$ or $|K \cdot \xi| > m_2$.  

Now suppose $w \in C^{\infty}(\Omt)$ such that $w \equiv 0$ in a neighbourhood of $\tilde{\Gamma}$, and $w$ satisfies \eqref{tildeBC}.  We can extend $w$ by zero to the rest of $\RnIp$, so $w \in \S(\RnIp)$, as in the flat case.  Set $w_s = T_{\rho} w$ and $w_{\ell} = (1 - T_{\rho}) w$, so $w = w_s + w_{\ell}$.  Then by Proposition \ref{tildepreCarl}, 
\begin{equation}\label{freqsplit}
h^{\half}\|w\|_{L^2(\Rno)} + \frac{h}{\sqrt{\e}}\|w\|_{H^1(\RnIp)} \lesssim \|\Lphet w \|_{L^2(\RnIp)} + h^{\half} \|w_s\|_{\dot{H}^1(\Rno)} + h^{\half} \|w_{\ell}\|_{\dot{H}^1(\Rno)}.
\end{equation}
Now we'll examine each of the last two terms on the right side separately.  The next proposition will deal with the small frequency term.

\begin{prop}\label{smallprop}
Suppose $w$ is as above.  There exist choices of $m_1, m_2, \mu_1,$ and $\mu_2$, depending only on $K$, such that if $\d$ is small enough,
\[
h^{\half}\|w_s\|_{\dot{H}^1(\Rno)} \lesssim \|\Lphet w \|_{L^2(\RnIp)} + h\|w\|_{H^1(\RnIp)}.
\]
\end{prop}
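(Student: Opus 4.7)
The plan is to adapt the flat-case factorization argument from Section 4 to the variable-coefficient operator $\Lphet$ by freezing coefficients on low $x$-frequencies. The crucial observation is that on the support of $\rho$, both roots of the symbol (in $h\pd_y$) have positive real part, so Lemma \ref{bddness} applies to both factors.

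First I would introduce the constant-coefficient operator $\L_0$ obtained from $\Lphet$ by setting $\grad f(x) = K$ and $\a = 1$, and compute its semiclassical-Fourier factorization
\[
\widehat{\L_0 w}(\xi, y) = (1+|K|^2)\bigl(h\pd_y - F_+(\xi)\bigr)\bigl(h\pd_y - F_-(\xi)\bigr)\hat{w}(\xi, y),
\]
where $F_\pm(\xi) = \bigl(1 + iK\cdot\xi \pm \sqrt{D(\xi)}\bigr)/(1+|K|^2)$ and $D(\xi) = (1 + iK\cdot\xi)^2 - (1+|K|^2)(1-|\xi|^2)$. The choice of $\mu_j, m_j$ (depending only on $K$) is made so that $|D(\xi)| < 1$ uniformly on $\mathrm{supp}\,\rho$: the constraint $\mu_2 < \half + |K|/(2\sqrt{1+|K|^2})$ keeps $\mathrm{Re}\,D < \half$, while taking $m_2$ small enough controls $|\mathrm{Im}\,D| = 2|K\cdot\xi|$. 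Since the principal branch satisfies $0 \leq \mathrm{Re}\,\sqrt{D} \leq |\sqrt{D}| < 1$ there, both $\mathrm{Re}(F_+) \geq 1/(1+|K|^2)$ and $\mathrm{Re}(F_-) > 0$ hold on $\mathrm{supp}\,\rho$. I would then extend $F_\pm$ to smooth symbols $\tilde{F}_\pm$ on $\Rn$ satisfying $|\tilde{F}_\pm|, \mathrm{Re}\,\tilde{F}_\pm \simeq 1 + |\xi|$ and agreeing with $F_\pm$ on $\mathrm{supp}\,\rho$, so the associated operators $J_\pm^* = T_{\tilde{F}_\pm} - h\pd_y$ are isomorphisms $H^1(\RnIp) \to L^2(\RnIp)$ by Lemma \ref{bddness}.

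Since $\L_0$ has $x$-constant coefficients it commutes with $T_\rho$, and since $w_s = T_\rho w$ has Fourier support in $\mathrm{supp}\,\rho$ where $\tilde{F}_\pm = F_\pm$, the identity $\L_0 w_s = (1+|K|^2)J_+^* J_-^* w_s$ holds exactly on $\RnIp$. Applying Lemma \ref{bddness} twice,
\[
\|w_s\|_{H^1(\RnIp)} \lesssim \|J_-^* w_s\|_{L^2(\RnIp)} \leq \|J_-^* w_s\|_{H^1(\RnIp)} \simeq \|\L_0 w_s\|_{L^2(\RnIp)}.
\]
Rewriting $\L_0 w_s = T_\rho\Lphet w - T_\rho(\Lphet - \L_0)w$ and bounding the error yields $\|w_s\|_{H^1(\RnIp)} \lesssim \|\Lphet w\|_{L^2} + (\d + h)\|w\|_{H^1}$. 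Combining with the semiclassical trace theorem $h^{\half}\|w_s\|_{L^2(\Rno)} \lesssim \|w_s\|_{H^1(\RnIp)}$ and the low-frequency inequality $\|w_s\|_{\dot{H}^1(\Rno)} \leq \mu_2 \|w_s\|_{L^2(\Rno)}$ gives $h^{\half}\|w_s\|_{\dot{H}^1(\Rno)} \lesssim \|\Lphet w\|_{L^2} + (\d + h)\|w\|_{H^1}$, and taking $\d$ small enough absorbs the $\d$ term into the stated tolerance.

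The main obstacle will be controlling $\|T_\rho(\Lphet - \L_0)w\|_{L^2}$ by $(\d + h)\|w\|_{H^1}$ without $H^2$ control on $w$. The difference operator contains a second-order term $(|\grad f|^2 - |K|^2)h^2\pd_y^2$ whose $L^\infty$ coefficient is only $O(\d)$ but whose naive action demands $H^2$ regularity. The remedy is to exploit that $T_\rho$ restricts to $|\xi| < \mu_2$ (so $h\grad_x T_\rho$ acts on $L^2$ with norm bounded by $\mu_2$) and that commutators $[T_\rho, g] = O(h)$ for smooth bounded $g$ by semiclassical pseudodifferential calculus, allowing $x$-derivatives to be pushed past the small multipliers at the cost of only $O(\d) + O(h)$ factors times $\|w\|_{H^1}$. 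A secondary verification is the positivity of $\mathrm{Re}(F_-)$ on $\mathrm{supp}\,\rho$, which pins down the constraint $\mu_2 < \half + |K|/(2\sqrt{1+|K|^2})$ exactly as stated in the excerpt.
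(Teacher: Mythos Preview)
Your overall strategy matches the paper's: freeze the coefficients of $\Lphet$ at $\grad f = K$, $\a = 1$, factor the resulting constant-coefficient operator via the roots $F_{\pm}$, choose $\mu_j,m_j$ so that $\mathrm{Re}\,F_{\pm}$ stays positive on $\mathrm{supp}\,\rho$, and invoke Lemma~\ref{bddness}. The gap is in the error analysis.

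Your claimed bound $\|T_\rho(\Lphet - \L_0)w\|_{L^2(\RnIp)} \lesssim (\d + h)\|w\|_{H^1(\RnIp)}$ fails precisely at the term you flag, $(|\grad f|^2 - |K|^2)h^2\pd_y^2$. The multiplier $T_\rho$ localizes only in the $x$-frequencies and commutes with $h^2\pd_y^2$; it does nothing to tame $y$-derivatives. Writing $g = |\grad f|^2 - |K|^2$, one has $T_\rho g\, h^2\pd_y^2 w = g\, h^2\pd_y^2 w_s + [T_\rho,g]h^2\pd_y^2 w$, and both pieces require control of $\|h^2\pd_y^2 w_s\|_{L^2}$ or $\|h^2\pd_y^2 w\|_{L^2}$, i.e.\ an $H^2$ norm. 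Your ``remedy'' paragraph speaks only of pushing $x$-derivatives through $T_\rho$ and of $[T_\rho,g]=O(h)$; neither addresses a pure $y$-derivative term. So the step cannot be closed with only $H^1$ control.

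The fix is already implicit in your factorization: Lemma~\ref{bddness} (together with the remark after it on the $H^1\!\to\!H^2$ mapping) gives $\|w_s\|_{H^2(\RnIp)} \simeq \|(h\pd_y - T_{F_+})(h\pd_y - T_{F_-})w_s\|_{L^2} = (1+|K|^2)^{-1}\|\L_0 w_s\|_{L^2}$, not merely the $H^1$ bound you recorded. The paper then organizes the comparison in two separate steps: first $\|\Lphet w_s\| \geq \|\L_0 w_s\| - C\d\|w_s\|_{H^2}$, so that for small $\d$ one gets $\|\Lphet w_s\| \gtrsim \|w_s\|_{H^2}$ and hence $h^{\half}\|w_s\|_{\dot H^1(\Rno)}$ by the trace theorem; and second $\|\Lphet w_s\| \lesssim \|\Lphet w\| + h\|w\|_{H^1}$ via the commutator $[\,(1+|\grad f|^2)^{-1}\Lphet,\,T_\rho\,]$, which by Lemma~\ref{flatops} is $hE_1$ with $E_1:H^1\to L^2$ bounded. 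Keeping the $H^2$ control on $w_s$ and splitting the two comparisons is what makes the $\d$-error absorbable; your single identity $\L_0 w_s = T_\rho\Lphet w - T_\rho(\Lphet - \L_0)w$ applies the coefficient-freezing error to $w$ rather than $w_s$ and thereby loses the absorption mechanism.
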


Before proceeding to the proof, let's make some definitions.  If $V \in \Rn$, define $A_{\pm}(V, \xi)$ by
\[
A_{\pm}(V,\xi) = \frac{1 + iV \cdot \xi \pm \sqrt{(1+iV\cdot \xi)^2 - (1+|V|^2)(1-|\xi|^2)}}{1 + |V|^2},
\]
In other words, $A_{\pm}(V, \xi)$ are defined to be the roots of the polynomial
\[
(1 + |V|^2)X^2 - 2(1+ iV \cdot \xi)X + (1 - |\xi|^2)
\]
In the definition, we'll choose the branch of the square root which has non-negative real part, so the branch cut occurs on the negative real axis.  

\begin{proof}
Now consider the behaviour of $A_{\pm}(K,\xi)$ on the support of $\rho$, or equivalently, on the support of $\hat{w}_s$.  If $\eta > 0$, we can choose $\mu_2$ such that on the support of $\hat{w}_s$, 
\[
1 - (1+|K|^2)(1-|\xi|^2) < \eta.
\]
Then on the support of $\hat{w}_s$, the expression
\[
(1+iK\cdot \xi)^2 - (1+|K|^2)(1-|\xi|^2)
\]
has real part confined to the interval $[-K^2-m_2^2, \eta+m_2^2]$, and imaginary part confined to the interval $[-2m_2, 2m_2]$.  Therefore, by correct choice of $\eta$ and $m_2$, we can ensure
\[
\mathrm{Re}A_{\pm}(K, \xi) > \frac{1}{2(1+|K|^2)}.
\]
on the support of $\hat{w}_s$.  This allows us to fix the choice of $\mu_1, \mu_2, m_1,$ and $m_2$.  Note that the choices depend only on $K$, as promised.  

The bounds on $A_{\pm}(K, \xi)$ allow us to choose $F_{\pm}$ so that $F_{\pm} = A_{\pm}(K, \xi)$ on the support of $\hat{w}_s$, and $\mathrm{Re}F_{\pm}, |F_{\pm}| \simeq 1 + |\xi|$ on $\Rn$, with constant depending only on $K$.  Therefore $F_{+}$ and $F_{-}$ both satisfy the conditions on $F$ in Section 5.  It follows that the operators $h \partial_y - F_{+}$ and $h\partial_y - F_{-}$ both have the properties of $J^{*}$ in that section.  

Up until now, the operator $\Lphet$ has only been applied to functions supported in $\Omt$.  However, we can extend the coefficients of $\Lphet$ to $\RnIp$ while retaining the $|\grad f - K| < \d$ condition.  Then
\begin{eqnarray*}
\|\Lphet w_s\|_{L^2(\RnIp)} &=& \|((1+|\grad f|^2)h^2\partial_y^2 - 2(\a + \grad f \cdot h\grad_x)h\partial_y + \a^2 + h^2\Lap_x)w_s\|_{L^2(\RnIp)} \\
                            &\geq& \|((1+|K|^2)h^2\partial_y^2 - 2(1 + K \cdot h\grad_x)h\partial_y + 1 + h^2\Lap_x)w_s\|_{L^2(\RnIp)} \\
                            & & - C\delta \|w_s\|_{H^2(\RnIp)}\\
\end{eqnarray*}
for sufficiently small $h$.  Meanwhile, 
\begin{eqnarray*}
& & (1+|K|^2)(h\partial_y - T_{F_{+}})(h\partial_y - T_{F_{-}})w_s \\
&=& (1+|K|^2)(h^2 \partial_y^2 - T_{F_{+} + F_{-}} h\partial_y + T_{F_{+}F_{-}})w_s. \\
\end{eqnarray*}
Since $F_{\pm} = A_{\pm}(K, \xi)$ on the support of $\hat{w}_s$, this can be written as 
\begin{eqnarray*}
& & (1+|K|^2)(h^2 \partial_y^2 - T_{A_{+} + A_{-}}h\partial_y + T_{A_{+}A_{-}})w_s\\
&=& ((1+|K|^2)h^2\partial_y^2 - 2(1 + K \cdot h\grad_x)h\partial_y + 1 + h^2\Lap_x)w_s\\
\end{eqnarray*}
Therefore
\[
\|\Lphet w_s\|_{L^2(\RnIp)} \geq \|(h\partial_y - T_{F_{+}})(h\partial_y - T_{F_{-}})w_s \|_{L^2(\RnIp)} - C\delta\|w_s\|_{H^2(\RnIp)}.
\]
Now by the boundedness properties,
\[
\|(h\partial_y - T_{F_{+}})(h\partial_y - T_{F_{-}})w_s \|_{L^2(\RnIp)} \simeq \|w_s\|_{H^2(\RnIp)},
\]
so for small enough $\delta$, 
\[
\|\Lphet w_s\|_{L^2(\RnIp)} \gtrsim \|w_s\|_{H^2(\RnIp)}.
\]
Then by the semiclassical trace formula, 
\[
\|\Lphet w_s\|_{L^2(\RnIp)} \gtrsim h^{\half}\|w_s\|_{\dot{H}^1(\Rno)}.
\]
Finally, note that 
\begin{equation*}
\begin{split}
\|\Lphet w_s\|_{L^2(\RnIp)} &=        \|\Lphet T_{\rho} w\|_{L^2(\RnIp)}\\
                            &\lesssim \|(1 + |\grad f|^2)^{-1} \Lphet T_{\rho} w\|_{L^2(\RnIp)}\\
                            &\lesssim \|T_{\rho} (1 + |\grad f|^2)^{-1} \Lphet  w\|_{L^2(\RnIp)} + \|hE_1w\|_{L^2(\RnIp)}.
\end{split}
\end{equation*}
where $hE_1$ comes from the commutator of $T_{\rho}$ and $(1 + |\grad f|^2)^{-1} \Lphet$.  By Lemma \ref{flatops}, $E_1$ is bounded from $H^1(\RnIp)$ to $L^2(\RnIp)$, so
\[
\|\Lphet w_s\|_{L^2(\RnIp)} \lesssim \|\Lphet  w\|_{L^2(\RnIp)} + h\|w\|_{H^1(\RnIp)}.
\]
Therefore
\[
\|\Lphet  w\|_{L^2(\RnIp)} + h\|w\|_{H^1(\RnIp)} \gtrsim h^{\half}\|w_s\|_{\dot{H}^1(\Rno)}
\]
as desired.

\end{proof}

Now we have to deal with the large frequency term.  

\begin{prop}\label{largeprop}
Suppose $w$ is the extension by zero to $\RnIp$ of a function in $C^{\infty}(\Omt)$ which is $0$ in a neighbourhood of $\tilde{\Gamma}$, and satisfies \eqref{tildeBC}, and let $w_{\ell}$ be defined as above.  Then if $\d$ is small enough,
\[
h^{\half} \|w_{\ell}\|_{\dot{H}^1(\Rno)} \lesssim \|\Lphet w \|_{L^2(\RnIp)} + h\|w\|_{H^1(\RnIp)} + h^{\frac{3}{2}}\|w\|_{L^2(\Rno)}.
\]
\end{prop}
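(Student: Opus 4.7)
The approach mirrors Proposition~\ref{smallprop}, but because on the support of $\widehat{w_\ell}$ the root $A_-(K,\xi)$ can have real part of either sign, the factor $h\partial_y - T_{F_-}$ is no longer boundedly invertible; the boundary condition \eqref{tildeBC} at $y=0$ will be invoked to compensate.

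I pick Fourier multipliers $F_\pm$ agreeing with $A_\pm(K,\xi)$ on the support of $\widehat{w_\ell}$, with $F_+$ globally extended so that $|F_+|,\,\operatorname{Re} F_+ \simeq 1+|\xi|$. Setting $v := (h\partial_y - T_{F_-})w_\ell$, the factorization produces the identity
\[
(1+|K|^2)(h\partial_y - T_{F_+})\,v = L_K w_\ell, \qquad L_K := (1+|K|^2)h^2\partial_y^2 - 2(1 + K\cdot h\grad_x)h\partial_y + 1 + h^2\Lap_x.
\]
Applying Lemma~\ref{bddness} to the $F_+$ factor yields $\|v\|_{H^1(\RnIp)} \lesssim \|L_K w_\ell\|_{L^2}$. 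Writing $L_K = \Lphet - E$ with $E$ having $O(\delta)$ second-order and $O(h/\e)$ lower-order coefficients, and using $h\partial_y w_\ell = v + T_{F_-}w_\ell$ to rewrite the $y$-derivatives appearing in $Ew_\ell$ in terms of $v$ and tangential pieces, I absorb the resulting $\delta\|v\|_{H^1}$ contribution for $\delta$ sufficiently small. The commutator bound $\|\Lphet w_\ell\|_{L^2} \lesssim \|\Lphet w\|_{L^2} + h\|w\|_{H^1}$ (valid because $[\Lphet, T_{1-\rho}]$ is an $O(h)$ first-order operator, by the semiclassical symbol calculus of Lemma~\ref{flatops}) then delivers
\[
\|v\|_{H^1(\RnIp)} \lesssim \|\Lphet w\|_{L^2(\RnIp)} + h\|w\|_{H^1(\RnIp)},
\]
and the semiclassical trace theorem gives $h^{1/2}\|v|_{y=0}\|_{L^2(\Rno)} \lesssim \|v\|_{H^1(\RnIp)}$.

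For the boundary step, I substitute \eqref{tildeBC} into $v|_{y=0} = h\partial_y w_\ell - T_{F_-}w_\ell$ and commute with $(1-T_\rho)$, which produces
\[
v|_{y=0} = T_{\beta - A_-}\,w_\ell|_{y=0} + O(\delta)\|w_\ell\|_{H^1(\Rno)} + O(h)\|w\|_{L^2(\Rno)},
\]
where $\beta = (1 + iK\cdot\xi)/(1+|K|^2)$ and $\beta - A_- = \sqrt{D}/(1+|K|^2)$, with $D$ the discriminant of the symbol factorization. The essential observation is that $D$ vanishes only on the set $\{K\cdot\xi=0,\ |\xi|^2 = |K|^2/(1+|K|^2)\}$, which by the choice of $\mu_1,m_1$ from Proposition~\ref{smallprop} lies strictly inside $\{\rho=1\}$, hence is disjoint from the support of $\widehat{w_\ell}$. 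Therefore $|\beta - A_-| \gtrsim 1 + |\xi|$ uniformly on that support, so $T_{\beta - A_-}$ admits a left inverse $L^2(\Rno) \to \dot H^1(\Rno)$ on spectrum restricted to this set. Unwinding yields $\|w_\ell|_{y=0}\|_{\dot H^1(\Rno)} \lesssim \|v|_{y=0}\|_{L^2(\Rno)} + O(\delta)\|w_\ell\|_{H^1(\Rno)} + O(h)\|w\|_{L^2(\Rno)}$. Multiplying by $h^{1/2}$ and chaining the previous displays gives the claim: the $h^{1/2}\cdot O(\delta)\|w_\ell\|_{H^1(\Rno)}$ piece is absorbable into the left-hand side of \eqref{freqsplit} for $\delta$ small enough, and the $h^{3/2}\|w\|_{L^2(\Rno)}$ term emerges from the $h\sigma w$ piece of \eqref{tildeBC}.

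The principal technical obstacle is the $O(\delta)\|w_\ell\|_{H^2}$-type contribution from the second-order part of $Ew_\ell$: since $L_K$ fails to be elliptic on the support of $\widehat{w_\ell}$ (its characteristic variety meets $\{\eta=0,\ |\xi|=1\}$), $\|w_\ell\|_{H^2}$ cannot be controlled directly by $\|L_K w_\ell\|_{L^2}$. The resolution rests on using the factorization itself to trade every $y$-derivative of $w_\ell$ for $v$ plus tangential data, reducing the uncontrolled piece to a purely tangential $H^2$ contribution whose product with $\delta$ is absorbed by a short bootstrap; this is precisely where the smallness hypothesis on $\delta$ is used.
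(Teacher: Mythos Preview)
Your boundary step and the overall architecture are fine, but the interior step has a real gap at exactly the point you flag as the ``principal technical obstacle.'' After your substitution $h\partial_y w_\ell = v + T_{F_-}w_\ell$, the second--order part of $E w_\ell$ produces the residual term $(|\grad f|^2 - |K|^2)\,T_{F_-^2}w_\ell$ together with $(\grad f - K)\cdot h\grad_x\,T_{F_-}w_\ell$, both of size $\delta\|w_\ell\|_{H^2_{\mathrm{tan}}(\RnIp)}$. There is no way to bootstrap this: if one tries to recover $h^2\Lap_x w_\ell$ from $L_K w_\ell$ and the same substitutions, the tangential symbol that multiplies $w_\ell$ becomes $-|\xi|^2 + (1+|K|^2)F_-^2 - 2(1+iK\cdot\xi)F_- + 1$, which vanishes identically on $\operatorname{supp}\widehat{w_\ell}$ precisely because $F_- = A_-(K,\xi)$ is a root there. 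So the constant--coefficient factorization simply carries no tangential $H^2$ information about $w_\ell$, and the $\delta\|w_\ell\|_{H^2_{\mathrm{tan}}}$ term cannot be absorbed anywhere.

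The paper avoids this by factoring with \emph{variable--coefficient} symbols. It introduces a second cutoff $\zeta$ supported strictly inside $\{\rho=1\}$ (to kill the branch--cut singularity of the square root) and sets $G^{\e}_\pm(\grad f,\xi) = (1-\zeta)A^{\e}_\pm(\grad f,\xi) + \zeta$, where $A^{\e}_\pm$ are the roots built with $\a$ in place of $1$. Because these symbols depend on $x$ through $\grad f(x)$ and on $\a$, the identity
\[
(1+|\grad f|^2)\bigl(h\partial_y - T_{G^{\e}_+(\grad f,\xi)}\bigr)\bigl(h\partial_y - T_{G^{\e}_-(\grad f,\xi)}\bigr)w_\ell = \Lphet w_\ell + hE_1 w_\ell
\]
holds with only an $O(h)$ first--order remainder; no $O(\delta)\|w_\ell\|_{H^2}$ term ever appears. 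The smallness of $\delta$ is then used only once, to replace the \emph{outer} factor $T_{G^{\e}_+(\grad f,\xi)}$ by the constant multiplier $T_{G_+(K,\xi)}$ (which satisfies the hypotheses of Lemma~\ref{bddness}); the resulting error is $\delta\|(h\partial_y - T_{G^{\e}_-(\grad f,\xi)})w_\ell\|_{H^1}$, i.e.\ $\delta$ times exactly the quantity that Lemma~\ref{bddness} puts on the left, so it absorbs directly. The inner factor is left $x$--dependent all the way to the boundary, where the boundary condition \eqref{tildeBC} (which also carries $\grad f$) matches it without any further $O(\delta)$ loss. This is the missing idea in your argument.
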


\begin{proof}

Suppose $V \in \Rn$.  Recall that we defined 
\[
A_{\pm}(V,\xi) = \frac{1 + iV \cdot \xi \pm \sqrt{(1+iV\cdot \xi)^2 - (1+|V|^2)(1-|\xi|^2)}}{1 + |V|^2},
\]
so $A_{\pm}(V,\xi)$ are roots of the polynomial
\[
(1 + |V|^2)X^2 - 2(1+ iV \cdot \xi)X + (1 - |\xi|^2).
\]
Now let's define
\[
A^{\e}_{\pm}(V,\xi) = \frac{\a + iV \cdot \xi \pm \sqrt{(\a+iV\cdot \xi)^2 - (1+|V|^2)(\a^2-|\xi|^2)}}{1 + |V|^2},
\]
so $A^{\e}_{\pm}(V,\xi)$ are the roots of the polynomial
\[
(1 + |V|^2)X^2 - 2(\a+ iV \cdot \xi)X + (\a^2 - |\xi|^2).
\]
(Recall that $\a$ is defined by $\a = 1 + \frac{h}{\e}(y + f(x))$.)  Again we'll use the branch of the square root with non-negative real part.  

Now set $\zeta \in C^{\infty}_0(\Rn)$ to be a smooth cutoff function such that $\zeta = 1$ if 
\[
|K \cdot \xi| < \half m_1 \mbox{ and } |\xi| < \half \frac{|K|}{\sqrt{1+|K|^2}} +  \half \mu_1, 
\]
and $\zeta = 0$ if $|K \cdot \xi| \geq m_1$ or $|\xi| \geq \mu_1$.  

Now define 
\[
G_{\pm}(V, \xi) = (1-\zeta)A_{\pm}(V,\xi) + \zeta
\]
and
\[
G^{\e}_{\pm}(V, \xi) = (1-\zeta)A^{\e}_{\pm}(V,\xi) + \zeta.
\]
Consider the singular support of $A^{\e}_{\pm}(\grad f,\xi)$.  These are smooth as functions of $x$ and $\xi$ except when the argument of the square root falls on the non-positive real axis.  This occurs when $\grad f \cdot \xi = 0$ and
\[
|\xi|^2 \leq \frac{\a^2|\grad f|^2}{1 + |\grad f|^2}.
\] 
Now for $\d$ sufficiently small, depending on $K$, this does not occur on the support of $1 -\z$.  Therefore
\[
G^{\e}_{\pm}(\grad f, \xi) = (1-\zeta)A^{\e}_{\pm}(\grad f,\xi) + \zeta  
\]
are smooth, and one can check that they are symbols of first order on $\Rn$.  

Then by properties of pseudodifferential operators,
\begin{eqnarray*}
& & (1+|\grad f|^2)(h\partial_y - T_{G^{\e}_{+}(\grad f, \xi)})(h\partial_y - T_{G^{\e}_{-}(\grad f, \xi)}) \\
&=& (1+|\grad f|^2)(h^2\partial_y^2 - T_{G^{\e}_{+}(\grad f, \xi) + G^{\e}_{-}(\grad f, \xi)}h\partial_y + T_{G^{\e}_{+}(\grad f, \xi)G^{\e}_{-}(\grad f, \xi)})+hE_1,
\end{eqnarray*}
where $E_1$ is bounded from $H^1(\RnIp)$ to $L^2(\RnIp)$.  
This last line can be written out as
\begin{equation*}
\begin{split}
&(1+|\grad f|^2)h^2\partial_y^2 - 2(\a + \grad f \cdot h \grad_x)h\partial_y T_{1 - \zeta}T_{1+\z} + (\a + h^2\Lap_x)T_{(1-\z)^2} \\
&+ hE_1 + T_{\zeta^2} -2 h\partial_y T_{\zeta},
\end{split}
\end{equation*}
by modfiying $E_1$ as necessary.  Now
\[
T_{1 - \zeta}w_{\ell} = w_{\ell}
\]
and
\[
T_{\zeta}w_{\ell} = 0,
\]
so
\begin{eqnarray*}
& & (1+|\grad f|^2)(h\partial_y - T_{G^{\e}_{+}(\grad f, \xi)})(h\partial_y - T_{G^{\e}_{-}(\grad f, \xi)})w_{\ell}\\
&=& \Lphet w_{\ell} - h E_1 w_{\ell}.
\end{eqnarray*}

Therefore
\begin{eqnarray*}
\|\Lphet w_{\ell} \|_{L^2(\RnIp)} &\gtrsim& \|(h\partial_y - T_{G^{\e}_{+}(\grad f, \xi)})(h\partial_y - T_{G^{\e}_{-}(\grad f, \xi)})w_{\ell}\|_{L^2(\RnIp)} \\
                                     & & - h\|w_{\ell}\|_{H^1(\RnIp)}.\\
\end{eqnarray*}
Now
\[
G^{\e}_{+}(\grad f, \xi) =  G_{+}(K, \xi) + (G^{\e}_{+}(\grad f, \xi) -  G_{+}(K, \xi)),
\]
and
\[
T_{G^{\e}_{+}(\grad f, \xi) -  G_{+}(K, \xi)}
\]
involves multiplication by functions bounded by $O(\d)$, so
\[
\|T_{G^{\e}_{+}(\grad f, \xi) -  G_{+}(K, \xi)}v\|_{L^2(\RnIp)} \lesssim \d \|v\|_{H^1(\RnIp)}.
\]
Therefore
\begin{eqnarray*}
\|\Lphet w_{\ell} \|_{L^2(\RnIp)} &\gtrsim& \|(h\partial_y - T_{G_{+}(K, \xi)})(h\partial_y - T_{G^{\e}_{-}(\grad f, \xi)})w_{\ell}\|_{L^2(\RnIp)} \\
                                     & & - h\|w_{\ell}\|_{H^1(\RnIp)} - \d\|(h\partial_y - T_{G^{\e}_{-}(\grad f, \xi)})w_{\ell}\|_{H^1(\RnIp)} .\\
\end{eqnarray*}
Now we can check that $G_{+}(K,\xi)$ satisfies the necessary properties of $F$ from Section 5, so
\begin{eqnarray*}
\|\Lphet w_{\ell} \|_{L^2(\RnIp)} &\gtrsim& \|(h\partial_y - T_{G^{\e}_{-}(\grad f, \xi)})w_{\ell}\|_{H^1(\RnIp)} \\
                                     & & - h\|w_{\ell}\|_{H^1(\RnIp)} - \d\|(h\partial_y - T_{G^{\e}_{-}(\grad f, \xi)})w_{\ell}\|_{H^1(\RnIp)} .\\
\end{eqnarray*}
Then for small enough $\d$,
\begin{equation*}
\begin{split}
\|\Lphet w_{\ell} \|_{L^2(\RnIp)} &\gtrsim \|(h\partial_y - T_{G^{\e}_{-}(\grad f, \xi)})w_{\ell}\|_{H^1(\RnIp)} - h\|w_{\ell}\|_{H^1(\RnIp)} \\
                                  &\gtrsim h^{\half}\|(h\partial_y - T_{G^{\e}_{-}(\grad f, \xi)})w_{\ell}\|_{L^2(\Rno)} - h\|w_{\ell}\|_{H^1(\RnIp)}.
\end{split}
\end{equation*}

Now by \eqref{tildeBC},
\[
h\partial_y w = \frac{w + \grad f \cdot h \grad_x w + h\s w}{1 + |\grad f|^2} 
\]
on $\Rno$, so
\[
h\partial_y w_{\ell} = \frac{w_{\ell} + \grad f \cdot \grad_x w_{\ell}}{1 + |\grad f|^2} + hE_0 w
\]
on $\Rno$, where $E_0$ is bounded from $L^2(\Rn)$ to $L^2(\Rn)$.  Therefore
\begin{eqnarray*}
\|\Lphet w_{\ell}\|_{L^2(\RnIp)} &\gtrsim& h^{\half}\left\| \frac{w_{\ell}+\grad f \cdot \grad_x w_{\ell}}{1+|\grad f|^2} -T_{G^{\e}_{-}(\grad f,\xi)}w_{\ell} \right\|_{L^2(\Rno)}\\
                                       & & - h\|w_{\ell}\|_{H^1(\RnIp)} - h^{\frac{3}{2}}\|w\|_{L^2(\Rno)} \\
                                 &\gtrsim& h^{\half} \|w_{\ell} \|_{\dot{H}^1(\Rno)} - h\|w_{\ell}\|_{H^1(\RnIp)} - h^{\frac{3}{2}}\|w\|_{L^2(\Rno)}      
\end{eqnarray*}

Now 
\[
\|w_{\ell}\|_{H^1(\RnIp)} \lesssim \|w\|_{H^1(\RnIp)}
\]
and
\[
\|\Lphet w_{\ell}\|_{L^2(\RnIp)} \lesssim \|\Lphet w \|_{L^2(\RnIp)} + h\|w\|_{H^1(\RnIp)}.
\]
Therefore
\[
\|\Lphet w \|_{L^2(\RnIp)} + h\|w\|_{H^1(\RnIp)} + h^{\frac{3}{2}}\|w\|_{L^2(\Rno)}  \gtrsim h^{\half} \|w_{\ell} \|_{\dot{H}^1(\Rno)}
\]
as desired.

\end{proof}

Now using the results of Propositions \ref{smallprop} and \ref{largeprop} in \eqref{freqsplit} gives
\[
h^{\half}\|w\|_{L^2(\Rno)} + h^{\half}\|w\|_{\dot{H}^1(\Gamma^c)}+ \frac{h}{\sqrt{\e}}\|w\|_{H^1(\RnIp)} \lesssim \|\Lphet w \|_{L^2(\RnIp)} +h\|w\|_{H^1(\RnIp)} + h^{\frac{3}{2}}\|w\|_{L^2(\Rno)}.
\]
The last two terms can be absorbed into the left side (for small enough $h$ and $\e$) to give
\[
h^{\half}\|w\|_{H^1(\Rno)} + \frac{h}{\sqrt{\e}}\|w\|_{H^1(\RnIp)} \lesssim \|\Lphet w \|_{L^2(\RnIp)}
\]
for $w \in C^{\infty}(\Omt)$ such that $w \equiv 0$ in a neighbourhood of $\tilde{\Gamma}$, and $w$ satisfies \eqref{tildeBC}.  A density argument and a change of variables then gives
\begin{equation}\label{graphest}
h^{\half} \|w\|_{H^1(\Gamma^c)} + \frac{h}{\sqrt{\e}}\|w\|_{H^1(\Om)} \lesssim \| \Lphe w \|_{L^2(\Om)}
\end{equation}
for all $w \in H^2(\Om)$ satisfying \eqref{BC}, in the case where $\Gamma^c$ coincides with a part of the graph $y = f(x)$, with $|\grad f - K| \leq \d$.  Note that the choice of $\d$ depends ultimately only on $K$.

\section{Proof of the Linear Carleman Estimate}

Now we need to remove the graph conditions on $\Gamma^c$.  Since $\Gamma$ is a neighbourhood of $\partial \Om_{+}$, in a small enough neighbourhood $U$ around any point on $\Gamma^c$, $\Gamma^c$ coincides locally with a subset of a graph of the form $y = f(x)$, with $\Om \cap U$ lying in the set $y > f(x)$, and $|\grad f - K| < \d$, where $K$ is some constant, and $\d$ is small enough for \eqref{graphest} to hold.  

Therefore we can let $\{U_1, \ldots U_m\}$ be a finite open cover of $\overline{\Om}$ such that each $\Om \cap U_j$ has smooth boundary, and each $\Gamma^c \cap U_j$ is either empty or represented as a graph of the form $y = f_j(x)$, with $|\grad f_j - K_j| < \d_j$, where $\d_j$ are small enough for 
\[
h^{\half} \|v_j\|_{L^2(\Gamma^c \cap U_j)} + \frac{h}{\sqrt{\e}}\|v_j\|_{H^1(U_j \cap \Om)} \lesssim \| \Lphe v_j \|_{L^2(U_j \cap \Om)}
\]
to hold for all $v_j \in H^2(\Om \cap U_j)$ such that 
\begin{equation}\label{jBC}
\begin{split}
v_j, \partial_{\nu} v_j &= 0 \mbox{ on } \partial (U_j \cap \Om) \setminus \Gamma^c  \\
h\partial_{\nu} (e^{-\frac{\ph}{h}} v_j) &= h\s e^{-\frac{\ph}{h}} v_j \mbox{ on } \Gamma^c \cap U_j.
\end{split}
\end{equation}

Now let $\chi_1, \ldots \chi_m$ be a partition of unity subordinate to $U_1, \ldots U_m$, and for $w \in H^2(\Om)$ satisfying \eqref{BC}, define $w_j = \chi_j w$.  Then if $\Gamma^c \cap U_j \neq \varnothing$, $w_j$ satisfies \eqref{jBC} for some $\s$, and so 
\[
h^{\half} \|w_j\|_{H^1(\Gamma^c \cap U_j)} + \frac{h}{\sqrt{\e}}\|w_j\|_{H^1(\Om)} \lesssim \| \Lphe w_j \|_{L^2(\Om)}.
\]
On the other hand, if $\Gamma^c \cap U_j = \varnothing$, then
\[
\frac{h}{\sqrt{\e}}\|w_j\|_{H^1(\Om)} \lesssim \| \Lphe w_j \|_{L^2(\Om)}
\]
just by applying Proposition \ref{preCarl}.  

Adding together these estimates gives
\[
h^{\half} \|w\|_{H^1(\Gamma^c)} + \frac{h}{\sqrt{\e}}\|w\|_{H^1(\Om)} \lesssim \sum_{j=1}^m \| \Lphe w_j \|_{L^2(\Om)}.
\]
Now each $\| \Lphe w_j \|_{L^2(\Om)} = \| \Lphe \chi_j w \|_{L^2(\Om)}$ is bounded by a constant times $\| \Lphe w \|_{L^2(\Om)} + h\|w\|_{H^1(\Om)}$, so
\[
h^{\half} \|w\|_{H^1(\Gamma^c)} + \frac{h}{\sqrt{\e}}\|w\|_{H^1(\Om)} \lesssim \| \Lphe w \|_{L^2(\Om)} + h\|w\|_{H^1(\Om)}.
\]
The last term on the right side can be absorbed into the left side for small enough $\e$, so
\[
h^{\half} \|w\|_{H^1(\Gamma^c)} + \frac{h}{\sqrt{\e}}\|w\|_{H^1(\Om)} \lesssim \| \Lphe w \|_{L^2(\Om)}.
\]

Now we can get rid of the $\e$ in the operator.  Note that if $w \in H^2(\Om)$, satisfies \eqref{BC}, then so does $e^{\frac{\ph^2}{2\e}}w$, albeit with a different choice of $\s$.  Therefore
\[
h^{\half} \|e^{\frac{\ph^2}{2\e}}w\|_{H^1(\Gamma^c)} + \frac{h}{\sqrt{\e}}\|e^{\frac{\ph^2}{2\e}}w\|_{H^1(\Om)} \lesssim \| e^{\frac{\ph^2}{2\e}} \Lph w \|_{L^2(\Om)},
\]
and since $e^{\frac{\ph^2}{2\e}}$ is bounded above and below on $\Om$, 
\[
h^{\half} \|w\|_{H^1(\Gamma^c)} + h\|w\|_{H^1(\Om)} \lesssim \| \Lph w \|_{L^2(\Om)}.
\]

Finally, 
\[
\| \Lphq w \|_{L^2(\Om)} \geq  \| \Lph w \|_{L^2(\Om)} - h^2\|qw\|_{L^2(\Om)},
\]
so absorbing the extra term into the left side,
\[
h^{\half} \|w\|_{H^1(\Gamma^c)} + h\|w\|_{H^1(\Om)} \lesssim \| \Lphq w \|_{L^2(\Om)}.
\]
This finishes the proof of Theorem \ref{mainCarl} in the linear case.

\section{Logarithmic Operators}

Now we turn to the logarithmic case of Theorem \ref{mainCarl}.  First we'll need a set of operators for the logarithmic case to parallel those introduced in Section 5 for the linear case.  

Again suppose that $F(\xi)$ is a complex valued function on $\Rn$, with the properties that $|F(\xi)|, \mathrm{Re} F(\xi) \simeq 1 + |\xi|$.  Define $\Rniip = \{ (x,r)| x \in \Rn, r > 1 \}$, and define $\S(\Rniip)$  to be the space of restrictions to $\Rniip$ of Schwartz functions on $\RnI$.   

Now for $u \in \S(\Rniip)$, define $J_{\log} u$ by 
\[
\widehat{J_{\log} u}(r, \xi) = \left( \frac{F(\xi)}{r}+h\partial_r \right) \hat{u}(r, \xi).
\]
This operator has adjoint $J_{\log}^{*}$ given by
\[
\widehat{J_{\log}^{*} u}(r, \xi) = \left( \frac{\overline{F(\xi)}}{r}-h\partial_r \right) \hat{u}(r, \xi).
\]
These operators have right inverses defined by 
\[
\widehat{J_{\log}^{-1}u} (r,\xi) = h^{-1} \int_1^{r}\hat{u}(t,\xi) \left( \frac{t}{r} \right)^{\frac{F(\xi)}{h}} dt 
\]
and
\[
\widehat{J_{\log}^{*-1}u} (r,\xi) = h^{-1} \int_r^{\infty}\hat{u}(t,\xi) \left( \frac{r}{t} \right)^{\frac{\overline{F(\xi)}}{h}} dt.
\]
To obtain the analog of Lemma \ref{bddness}, we need to introduce the weighted Sobolev space $H^1_r(\Rniip)$, whose norm is defined by 
\[
\| u \|^2_{H^1_r(\Rniip)} = \left\| \frac{u}{r} \right\|^2_{L^2(\Rniip)} + \|h\partial_r u\|^2_{L^2(\Rniip)} + \left\| \frac{h}{r}\grad_{x} u \right\|^2_{L^2(\Rniip)}.
\]
Then we have the following boundedness results.  

\begin{lemma}\label{logbounds}
$J_{\log}, J_{\log}^{*},J_{\log}^{-1},$ and $J_{\log}^{*-1}$ extend as bounded maps
\[
J_{\log}, J_{\log}^{*}: H^1_r(\Rniip) \rightarrow L^2(\Rniip)
\]
and
\[
J_{\log}^{-1}, J_{\log}^{*-1}: L^2(\Rniip) \rightarrow H^1_r(\Rniip).
\]
Moreover, the extensions of $J_{\log}^{*}$ and $J_{\log}^{*-1}$ are isomorphisms.  
\end{lemma}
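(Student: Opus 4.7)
The plan is to mimic the proof of Lemma \ref{bddness} directly in $(x,r)$ coordinates, using Plancherel in $x$ together with the explicit integral formulas for $J_{\log}^{\pm 1}$ and $J_{\log}^{*\pm 1}$. The weighted space $H^1_r(\Rniip)$ has been designed so that its $1/r$ weights match the radial scaling of $J_{\log}$; equivalently, the substitution $s = \log r$ (under which $r\partial_r = \partial_s$) identifies $rJ_{\log}$ with the flat operator $F(D_x) + h\partial_s$ of Section 5, modulo a lower order shift in $F$ arising from the $e^{s/2}$-rescaling needed to make the change of variables an $L^2$-isometry. One could carry out the reduction by this change of variables; I will instead argue directly.

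For $J_{\log}$ and $J_{\log}^{*}$ the $L^2$-boundedness is immediate: by Plancherel, the triangle inequality, and $|F(\xi)| \lesssim 1+|\xi|$,
\[
\|J_{\log}u\|_{L^2(\Rniip)}^2 \lesssim \|u/r\|_{L^2}^2 + \|(h/r)\grad_x u\|_{L^2}^2 + \|h\partial_r u\|_{L^2}^2 = \|u\|_{H^1_r(\Rniip)}^2,
\]
and the same computation handles $J_{\log}^{*}$.

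For $J_{\log}^{-1}$, the substitution $t = r\tau$ in the defining integral yields
\[
\frac{\widehat{J_{\log}^{-1}u}(r,\xi)}{r} = h^{-1}\int_{1/r}^{1}\hat u(r\tau,\xi)\,\tau^{F(\xi)/h}\,d\tau.
\]
Applying Minkowski's inequality in $\tau$ and evaluating the inner $L^2$ norm in $r$ by the further substitution $t = r\tau$ (which contributes a factor of $\tau^{-1/2}$), and using $\mathrm{Re}F(\xi) \simeq 1 + |\xi|$ to compute $\int_0^1 \tau^{\mathrm{Re}F(\xi)/h - 1/2}\,d\tau \simeq h/|F(\xi)|$, I obtain
\[
\bigl\|\widehat{J_{\log}^{-1}u}/r\bigr\|_{L^2(r>1)} \lesssim \frac{1}{|F(\xi)|}\,\|\hat u(\cdot,\xi)\|_{L^2(t>1)}.
\]
Squaring, integrating in $\xi$, and invoking Plancherel bounds $\|J_{\log}^{-1}u/r\|_{L^2(\Rniip)}$ by $\|u\|_{L^2(\Rniip)}$. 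The $(h/r)\grad_x$ term is controlled by the same estimate with an extra factor of $h|\xi|/|F(\xi)| \lesssim h$, and the $h\partial_r$ term by rewriting $h\partial_r\widehat{J_{\log}^{-1}u} = \hat u - (F(\xi)/r)\widehat{J_{\log}^{-1}u}$ (that is, $J_{\log}J_{\log}^{-1} = I$) and applying the same estimate with an extra factor of $|F(\xi)|$, which exactly cancels the $1/|F(\xi)|$. The argument for $J_{\log}^{*-1}$ is completely parallel, using $t = r/\tau$ with $\tau \in (0,1)$; the resulting $\tau$-integral $\int_0^1 \tau^{\mathrm{Re}F(\xi)/h - 3/2}\,d\tau$ converges at $\tau = 0$ (equivalently, $t = \infty$ in the original variable) precisely because $\mathrm{Re}F > 0$.

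The isomorphism claim for $J_{\log}^{*}$ and $J_{\log}^{*-1}$ finally reduces to verifying $J_{\log}^{*}J_{\log}^{*-1}u = u$ and $J_{\log}^{*-1}J_{\log}^{*}u = u$ for $u \in \S(\Rniip)$ by direct manipulation of the integral formulas (integration by parts in $r$, with boundary terms at $r = 1$ and $r = \infty$ vanishing thanks to the decay $(r/t)^{\overline{F}/h}$ and the cutoff at $t = r$), followed by density. The main technical obstacle I expect is the bookkeeping of powers of $\tau$ and of $h$ in the Minkowski estimates; the crucial positivity of the exponent $\mathrm{Re}F(\xi)/h$, uniformly in $\xi$, is what makes all the $\tau$-integrals converge and produces exactly the factor $\simeq h/|F(\xi)|$ needed to cancel the frequency weights built into $\|\cdot\|_{H^1_r}$.
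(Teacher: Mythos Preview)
Your proposal is correct and follows essentially the same route as the paper, which simply says that the proof ``follows the method used for Lemma \ref{bddness}'' and refers to \cite{C} for details. Your multiplicative substitution $t = r\tau$ is precisely the logarithmic analogue of the translation $t \mapsto t+y$ used in the proof of Lemma \ref{bddness}; the resulting Minkowski argument, the identity $h\partial_r\widehat{J_{\log}^{-1}u} = \hat u - (F/r)\widehat{J_{\log}^{-1}u}$ for the $h\partial_r$ piece, and the density argument for the isomorphism claim all match the linear template exactly.
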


The proof follows the method used for Lemma \ref{bddness}.  In addition, these operators are identical to the ones introduced in ~\cite{C}, and the proof of this theorem is included in full there.

\section{The Logarithmic Case}

Now we can deal with the logarithmic cases of Theorem \ref{mainCarl}.  Let $p$ be outside the convex hull of $\overline{\Om}$.  By a change of coordinates on $\RnI$, we can assume that $p = 0$.  Then fix $\ph(x) = \log |x|$.  (We will deal with $\ph = -\log |x|$ later.)

As in the linear argument, we examine the graph case first.  Fix spherical coordinates $(\th, r)$ on $\RnI$, where $\th \in S^n$ and $r \in [0, \infty)$.  Suppose $f: S^n \rightarrow (0, \infty)$ is smooth, and consider the case where $\Om$ lies in the set $\{ r \geq f(\th)\}$, and $\Gamma^c$ lies in the graph $\{ r = f(\th) \}$.  Note that in these coordinates, $\ph = \log r$.  

Following Section 6, consider the change of variables 
\[
(\th,r) \mapsto \left( \th, \frac{r}{f(\th)} \right).  
\]
This maps $\Om$ to a subset of $\RnIB$, where $B_1$ is the unit ball in $\RnI$, centred at the origin, and maps $\Gamma^c$ to be a subset of $\partial B_1$.  Define $\tilde{\Om}'$ and $\tilde{\Gamma}'$ to be the images of $\Om$ and $\Gamma$ respectively, under this map. 

Then from Proposition \ref{preCarl} we obtain the following lemma for $\tilde{\Om}'$.  

\begin{lemma}\label{prelogtildepreCarl}
Suppose $w \in H^2(\Omt ')$, and  
\begin{equation*}
\begin{split}
w, \partial_{\nu} w &= 0 \mbox{ on } \tilde{\Gamma}' \\
h \partial_{r}w|_{\tilde{\Gamma} '^c} &= \frac{w + (\grad_{S^n} \log f) \cdot h \grad_{S^n} w - h\s w}{1 + |\grad_{S^n} \log f|^2} .
\end{split}
\end{equation*}
where $g$ is smooth and bounded on $\Omt '$.  
Then
\[
h^{\half}\|w\|_{L^2(\tilde{\Gamma}'^c)} + \frac{h}{\sqrt{\e}}\|w\|_{H^1(\Omt')} \lesssim \|\Lphet ' w \|_{L^2(\Omt')} + h^{\half} \|h\grad_{\th} w\|_{L^2(\tilde{\Gamma}'^c)}
\]
where
\[
\Lphet ' = (1+|\grad_{S^n} \log f|^2)h^2\partial_r^2 - \frac{2}{r}(\a + (\grad_{S^n} \log f) \cdot h\grad_{S^n})h\partial_y + \frac{1}{r^2}(\a^2 + h^2\Lap_{S^n})
\]
and $\a = 1 + \frac{h}{\e}\log(rf(\theta))$.  
\end{lemma}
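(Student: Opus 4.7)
The strategy is to mimic the proof of Proposition \ref{tildepreCarl} verbatim, using the spherical ``graph flattening'' change of variables $(\th,r) \mapsto (\th, \rho)$ with $\rho = r/f(\th)$ (inverse $r = \rho f(\th)$) in place of the linear one $(x,y) \mapsto (x, y-f(x))$. This map sends $\tilde\Om'$ onto $\Om$ and $\tilde\Gamma'^c$ onto $\Gamma^c$. Given $w \in H^2(\tilde\Om')$ satisfying the stated boundary condition, I would set $v(\th,r) = w(\th, r/f(\th))$, which lies in $H^2(\Om)$, and verify by direct computation with the chain rule that the boundary relation
\[
h\partial_r w = \frac{w + (\grad_{S^n}\log f)\cdot h\grad_{S^n} w - h\s w}{1+|\grad_{S^n}\log f|^2}
\]
on $\tilde\Gamma'^c$ pulls back to $h\partial_\nu(e^{-\ph/h} v) = h\tilde\s \, e^{-\ph/h} v$ on $\Gamma^c$ (with a smooth $\tilde\s$ bounded uniformly in $h$), so that $v$ satisfies \eqref{BC}. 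I would then apply Proposition \ref{preCarl} to $v$.

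Next I would translate each term of the resulting inequality back to $w$ using the change of variables. Since the Jacobian and the induced boundary measure are smooth and bounded above and below on $\tilde\Om'$, we get the comparability
\[
\|v\|_{L^2(\Gamma^c)} \simeq \|w\|_{L^2(\tilde\Gamma'^c)}, \quad \|v\|_{H^1(\Om)} \simeq \|w\|_{H^1(\tilde\Om')}, \quad \|h\grad_t v\|_{L^2(\Gamma^c)} \simeq \|h\grad_\th w\|_{L^2(\tilde\Gamma'^c)}.
\]
Here I would use that the tangential gradient on the sphere $\{r = f(\th)\}$ differs from the $S^n$-gradient only by a smooth bounded factor plus a zeroth-order contribution from $\grad_{S^n}\log f$.

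The main work, exactly as in the linear case, is the pullback of the operator. Writing the Laplacian in spherical coordinates as $\Lap = \partial_r^2 + \frac{n}{r}\partial_r + \frac{1}{r^2}\Lap_{S^n}$ and using $\ph = \log r = \log \rho + \log f(\th)$, a direct chain-rule computation shows that
\[
(\Lphe v)(\th, \rho f(\th)) = \Lphet' w(\th,\rho) + hE_1 w(\th, \rho),
\]
where $\Lphet'$ is the stated second-order operator and $E_1$ is a first-order semiclassical differential operator with smooth, $h$-independent coefficients on $\tilde\Om'$. The principal-symbol calculation gives exactly the coefficients displayed in the lemma: the coefficient $1+|\grad_{S^n}\log f|^2$ on $h^2\partial_r^2$ from the square of the transformed radial derivative, the cross term $-\frac{2}{r}(\a + \grad_{S^n}\log f \cdot h\grad_{S^n})h\partial_r$ from mixed terms and the convexification, and the spherical Laplacian piece scaled by $r^{-2}$; the $\a = 1 + \frac{h}{\e}\log(rf(\th))$ arises from $\ph_c = \ph + \frac{h\ph^2}{2\e}$ exactly as in the flat case.

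The final step is a change of variables in the $L^2$ norm of $\Lphe v$, which yields
\[
\|\Lphe v\|_{L^2(\Om)} \lesssim \|\Lphet' w\|_{L^2(\tilde\Om')} + h\|w\|_{H^1(\tilde\Om')}.
\]
Combining everything reproduces the inequality of Proposition \ref{tildepreCarl} with the extra nuisance term $h\|w\|_{H^1(\tilde\Om')}$, which for sufficiently small $\e$ is absorbed into the $\frac{h}{\sqrt\e}\|w\|_{H^1(\tilde\Om')}$ term on the left-hand side. The hardest (and only really new) part is bookkeeping the chain-rule computation that identifies the pulled-back operator as $\Lphet'$ up to a genuinely first-order $hE_1$ error; everything else is a direct transcription of the argument already used in the linear graph setting.
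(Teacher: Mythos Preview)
Your proposal is correct and follows exactly the approach the paper intends: the paper does not spell out a proof of this lemma, but simply says it is obtained from Proposition~\ref{preCarl} via the change of variables $(\th,r)\mapsto(\th,r/f(\th))$, i.e.\ the spherical analogue of the argument for Proposition~\ref{tildepreCarl}. Your outline reproduces that argument step by step (pull back $w$ to $v$ on $\Om$, check \eqref{BC}, apply Proposition~\ref{preCarl}, transform each norm, identify $\Lphe$ with $\Lphet'$ up to an $hE_1$ error, absorb $h\|w\|_{H^1}$ for small $\e$); the only slip is that the map $(\th,r)\mapsto(\th,r/f(\th))$ sends $\Om$ to $\tilde\Om'$, not the reverse, but your formula $v(\th,r)=w(\th,r/f(\th))$ is consistent with the correct direction.
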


In order to be able to take Fourier transforms, as in the linear case, we will make one more change of variables, using spherical coordinates, to make everything Euclidean.  

Note that the requirement that $p = 0$ is outside the closure of the convex hull of $\Om$ means that $\overline{\Om}$ lies on one side of a hyperplace through the origin.  Then the same must be true of $\Omt '$.  Therefore we can choose spherical coordinates $(r, \th_1, \ldots, \th_n)$ on $\RnIB$, 
\[
x_1 = r \cos \th_1, \, x_2 = r \sin \th_1 \cos \th_2, \ldots, x_{n+1} = r \sin \th_1 \cdots \sin \th_n,
\]
such that the change of variables $\Omt ' \rightarrow [1, \infty) \times (0, \pi) \times \ldots \times (0,\pi)$ given by taking spherical coordinates on $\Omt '$ is a diffeomorphism on $\Omt '$, which takes $\Omt '$ to a subset of $\Rniip$.  

Define $\Omt$ and $\tilde{\Gamma}$ to be the images of $\Omt '$ and $\tilde{\Gamma} '$ under this change of variables.  Note that $\tilde{\Gamma}^c$ is now a subset of the hyperplane $\Rnone := \{ y= 1 \}$. 

Now we have the following proposition.  
\begin{prop}\label{logtildepreCarl}
Suppose $w \in H^2(\Omt)$, and  
\begin{equation}\label{logtildeBC}
\begin{split}
w, \partial_{\nu} w &= 0 \mbox{ on } \tilde{\Gamma} \\
h \partial_{r}w|_{\tilde{\Gamma}^c} &= \frac{w + \b(x) \cdot h \grad_x w - h\s w}{1 + |\g(x) |^2}.
\end{split}
\end{equation}
where $\b$ and $\g$ are smooth vector fields on $\Rn$ such that $\b(x) \cdot h \grad_x$ and $|\g(x)|^2$ coincide with the coordinate representations of $\grad_{S^n} \log f \cdot h \grad_{S^n}$ and $|\grad_{S^n} \log f|^2$, respectively, in a neighbourhood of $\Omt$.
Then
\[
h^{\half}\|w\|_{L^2(\tilde{\Gamma}^c)} + \frac{h}{\sqrt{\e}}\|w\|_{H^1(\Omt)} \lesssim \|\Lphet w \|_{L^2(\Omt)} + h^{\half} \|h\grad_x w\|_{L^2(\tilde{\Gamma}^c)}
\]
where
\[
\Lphet = (1+|\g|^2)h^2\partial_r^2 - \frac{2}{r}(\a + \b \cdot h\grad_x)h\partial_y + \frac{1}{r^2}(\a^2 + h^2 \L)
\]
and $\L$ is a second order differential operator in the $x$ variables only, whose coefficients, in a neighbourhood of $\Omt$, coincide with the coordinate representation of $\Lap_{S^n}$.  
\end{prop}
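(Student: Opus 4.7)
The plan is to deduce Proposition \ref{logtildepreCarl} from Lemma \ref{prelogtildepreCarl} by performing the spherical coordinate change of variables $(\theta_1, \ldots, \theta_n, r) \leftrightarrow (x_1, \ldots, x_n, r)$, in direct parallel with the way Proposition \ref{tildepreCarl} was obtained from Proposition \ref{preCarl}. Given $w \in H^2(\tilde{\Omega})$ satisfying \eqref{logtildeBC}, define $v$ on $\tilde{\Omega}'$ to be the pullback of $w$ under this spherical diffeomorphism. First I would check that $v$ lies in $H^2(\tilde{\Omega}')$ and satisfies the hypotheses of Lemma \ref{prelogtildepreCarl}: the outward normal along $\tilde{\Gamma}'^c \subset \partial B_1$ is $\partial_r$, which is preserved by the coordinate change, and by hypothesis $\beta \cdot h\grad_x$ and $|\gamma|^2$ coincide with the coordinate representations of $\grad_{S^n} \log f \cdot h\grad_{S^n}$ and $|\grad_{S^n} \log f|^2$ in a neighbourhood of $\tilde{\Omega}$. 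Hence the boundary relation \eqref{logtildeBC} on $\tilde{\Gamma}^c$ transforms exactly into the boundary relation in Lemma \ref{prelogtildepreCarl} for $v$ on $\tilde{\Gamma}'^c$.

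Next I would apply Lemma \ref{prelogtildepreCarl} to $v$ to obtain
\[
h^{\half}\|v\|_{L^2(\tilde{\Gamma}'^c)} + \frac{h}{\sqrt{\e}}\|v\|_{H^1(\tilde{\Omega}')} \lesssim \|\Lpht' v\|_{L^2(\tilde{\Omega}')} + h^{\half}\|h\grad_{\theta} v\|_{L^2(\tilde{\Gamma}'^c)}.
\]
Because the spherical diffeomorphism is smooth on the relatively compact set in question, with Jacobian (and its inverse) bounded above and below, the volume and surface measures change by comparable factors, so
\[
\|v\|_{L^2(\tilde{\Gamma}'^c)} \simeq \|w\|_{L^2(\tilde{\Gamma}^c)}, \quad \|v\|_{H^1(\tilde{\Omega}')} \simeq \|w\|_{H^1(\tilde{\Omega})}, \quad \|h\grad_{\theta} v\|_{L^2(\tilde{\Gamma}'^c)} \simeq \|h\grad_x w\|_{L^2(\tilde{\Gamma}^c)},
\]
using that $\grad_\theta$ and $\grad_x$ are related by a bounded, bounded-below matrix of coefficients.

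The main operator calculation is the comparison of $\Lpht'$ with $\Lpht$. Under the spherical change of variables, $\partial_r$ is preserved, $\Lap_{S^n}$ transforms into $\L$ by definition, and the lower-order derivative structures in $\grad_{S^n}\log f \cdot h\grad_{S^n}$ translate into $\beta \cdot h\grad_x$. Thus
\[
(\Lpht' v)(\theta, r) = (\Lpht w)(x(\theta), r) + hE_1 w(x(\theta), r),
\]
where $E_1$ is a first-order semiclassical differential operator on $\tilde{\Omega}$ arising from the lower-order commutator terms produced by the change of coordinates. By the volume comparability, this yields
\[
\|\Lpht' v\|_{L^2(\tilde{\Omega}')} \lesssim \|\Lpht w\|_{L^2(\tilde{\Omega})} + h\|w\|_{H^1(\tilde{\Omega})}.
\]
Substituting into the previous inequality produces
\[
h^{\half}\|w\|_{L^2(\tilde{\Gamma}^c)} + \frac{h}{\sqrt{\e}}\|w\|_{H^1(\tilde{\Omega})} \lesssim \|\Lpht w\|_{L^2(\tilde{\Omega})} + h^{\half}\|h\grad_x w\|_{L^2(\tilde{\Gamma}^c)} + h\|w\|_{H^1(\tilde{\Omega})},
\]
and for $\varepsilon$ sufficiently small the final term is absorbed into the left-hand side, completing the proof.

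I expect the main bookkeeping obstacle to be verifying the identity $(\Lpht' v)(\theta,r) = (\Lpht w)(x(\theta),r) + hE_1 w$: one has to keep track of how the $\a$-factor, the $1/r$ and $1/r^2$ prefactors, and the coefficients of $|\gamma|^2$ and $\beta$ interact with the derivatives of the spherical coordinate change. However, since $\tilde{\Omega}$ is contained in a region where $r \geq 1$ and the spherical coordinates stay away from their singularities (by the convex-hull assumption on $p$, which places $\tilde{\Omega}'$ on one side of a hyperplane through the origin), all coefficients and their derivatives are uniformly bounded, and the error from the change of variables is manifestly a first-order operator with an overall factor of $h$, as required.
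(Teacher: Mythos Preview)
Your proposal is correct and follows exactly the approach the paper intends: the paper states Proposition~\ref{logtildepreCarl} immediately after introducing the spherical coordinate change and leaves the proof implicit, relying on the parallel with the derivation of Proposition~\ref{tildepreCarl} from Proposition~\ref{preCarl}. One minor simplification: because $\beta\cdot h\nabla_x$, $|\gamma|^2$, and $\mathcal{L}$ are \emph{defined} to be the exact coordinate representations of $\nabla_{S^n}\log f\cdot h\nabla_{S^n}$, $|\nabla_{S^n}\log f|^2$, and $\Delta_{S^n}$ near $\tilde\Omega$, the operator $\tilde{\mathcal{L}}_{\varphi,\varepsilon}$ is literally the pushforward of $\tilde{\mathcal{L}}_{\varphi,\varepsilon}'$ under the spherical diffeomorphism, so no commutator error $hE_1$ actually appears here (in contrast with the linear case, where $\Delta_x$ was not the pushforward of the original Laplacian); your absorption step is therefore harmless but unnecessary.
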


Now we will restrict the graph conditions on $f$.  Let $\d > 0$ and $K \in \Rn$ be constants.  Suppose that the original domain $\Om$ is such that $\grad_{S^n} \log f $ is nearly constant, and in our choice of spherical coordinates, $\th_j$ are all near $\frac{\pi}{2}$.  Then we can suppose that $|\b - K| < \d, |\g - K| < \d,$ and $\|(\L - \Lap_x) v \|_{L^2(\Rn)} < \d \|v\|_{H^2(\Rn)}$ for $v \in H^2(\Rn)$.  

Now as in the linear case, we can choose $m_2 > m_1 > 0$, and $\mu_1$ and $\mu_2$ such that 
\[
\frac{|K|}{\sqrt{1+|K|^2}} < \mu_1 < \mu_2 < \half + \frac{|K|}{2\sqrt{1+|K|^2}} < 1,
\]
and $\rho \in \Czinf(\Rn)$ such that $\rho(\xi) = 1$ if $|\xi| < \mu_1$ and $|K \cdot \xi| < m_1$, and $\rho(\xi) = 0$ if $|\xi| > \mu_2$ or $|K \cdot \xi| > m_2$.  Again we can consider $w \in C^{\infty}(\Omt)$ such that $w \equiv 0$ in a neighbourhood of $\tilde{\Gamma}$, and $w$ satisfies \eqref{tildeBC}.  We extend $w$ by zero to become an element of $\S(\Rniip)$, use $\rho$ to split $w$ into a small frequency part $w_s$ and a large frequency part $w_{\ell}$, and write out the estimate from Proposition \ref{logtildepreCarl} as 
\begin{equation}\label{logfreqsplit}
h^{\half}\|w\|_{L^2(\Rnone)} + \frac{h}{\sqrt{\e}}\|w\|_{H^1(\Rniip)} \lesssim \|\Lphet w \|_{L^2(\Rniip)} + h^{\half} \|w_s\|_{\dot{H}^1(\Rnone)} + h^{\half} \|w_{\ell}\|_{\dot{H}^1(\Rnone)}.
\end{equation}

\begin{prop}\label{logsmallprop}
Suppose $w$ is as above.  There exist choices of $m_1, m_2, \mu_1,$ and $\mu_2$ depending only on $K$, such that if $\d$ is small enough,
\[
h^{\half}\|w_s\|_{\dot{H}^1(\Rnone)} \lesssim \|\Lphet w \|_{L^2(\Rniip)} + h\|w\|_{H^1(\Rniip)}.
\]
\end{prop}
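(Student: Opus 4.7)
The plan is to parallel the proof of Proposition \ref{smallprop} line for line, replacing the Euclidean operators $J, J^*$ of Section 5 by the polar operators $J_{\log}, J_{\log}^*$ of Section 8 and replacing the flat semiclassical trace lemma of Section 4 by its $r$-weighted analog. First I would extend $\b$, $\g$, and the operator $\L$ from a neighbourhood of $\Omt$ to all of $\Rniip$ while preserving $|\b - K|, |\g - K| < \d$ and $\|(\L - \Lap_x)v\|_{L^2} < \d\|v\|_{H^2}$. Exactly as in Proposition \ref{smallprop}, the choice of $\mu_1, \mu_2, m_1, m_2$ depending only on $K$ forces the roots $A_\pm(K,\xi)$ of $(1+|K|^2)X^2 - 2(1+iK\cdot\xi)X + (1-|\xi|^2)$ to have real part bounded below by $\frac{1}{2(1+|K|^2)}$ on the support of $\hat{w}_s$, so I can extend them to symbols $F_\pm$ on $\Rn$ with $|F_\pm|, \mathrm{Re}\, F_\pm \simeq 1 + |\xi|$, for which the operators $h\partial_r - T_{F_\pm}/r$ play the role of $J_{\log}^*$ in Lemma \ref{logbounds}.

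The heart of the argument is the factorization
\[
\left( h\partial_r - \frac{T_{F_+}}{r} \right)\left( h\partial_r - \frac{T_{F_-}}{r} \right) = h^2 \partial_r^2 - \frac{T_{F_+} + T_{F_-}}{r}h\partial_r + \frac{T_{F_+}T_{F_-}}{r^2} + \frac{h T_{F_-}}{r^2},
\]
which, once multiplied by $(1+|K|^2)$, agrees with the principal part of $\Lphet$ applied to $w_s$ by the Vieta relations for $A_\pm(K,\xi)$, modulo two kinds of errors: an $O(\d)$ error in the natural $H^2$-analog $H^2_r(\Rniip)$ of the space $H^1_r(\Rniip)$ (from replacing $\b, \g, \L, \a$ by $K, K, \Lap_x, 1$), and $O(h)$ commutator errors coming from the noncommutation of the multipliers $T_{F_\pm}$ with the factors $1/r$ and $1/r^2$. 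Iterating Lemma \ref{logbounds}, each factor $h\partial_r - T_{F_\pm}/r$ is, up to sign and conjugation, an instance of $-J_{\log}^*$, so the composition is bounded below in $L^2(\Rniip)$ by $\|w_s\|_{H^2_r(\Rniip)}$. For $\d$ small enough the $O(\d)$ error is absorbed and one obtains $\|\Lphet w_s\|_{L^2(\Rniip)} \gtrsim \|w_s\|_{H^2_r(\Rniip)} - O(h)\|w\|_{H^1(\Rniip)}$.

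To produce the $\dot{H}^1(\Rnone)$ norm on the left-hand side of the desired inequality I would then prove an $r$-weighted semiclassical trace estimate $h^{\half}\|w_s\|_{\dot{H}^1(\Rnone)} \lesssim \|w_s\|_{H^2_r(\Rniip)}$ by the same duality argument used for the flat trace lemma, with $J_{\log}^*$ in place of $J^*$. Finally, the commutator $[T_\rho, \Lphet]$, handled by the polar analog of Lemma \ref{flatops}, is of order $O(h)$ from $H^1(\Rniip)$ to $L^2(\Rniip)$, giving $\|\Lphet w_s\|_{L^2(\Rniip)} \lesssim \|\Lphet w\|_{L^2(\Rniip)} + h\|w\|_{H^1(\Rniip)}$, and chaining the three inequalities produces the claim. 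The main obstacle I foresee is the bookkeeping of the non-commutative $1/r$ and $1/r^2$ factors inside the factorization, which were trivial Euclidean constants in Section 5 but now genuinely interact with the Fourier multipliers; each such commutator must be checked to be $O(h)$ in the weighted norms, and the correct definition of $H^2_r(\Rniip)$ must be chosen so that iterating Lemma \ref{logbounds} lands in precisely the space from which the trace inequality on $\Rnone$ gains an $h^{\half}$ factor.
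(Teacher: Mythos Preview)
Your overall strategy is correct and mirrors the paper's proof closely: choose $F_\pm$ agreeing with the roots $A_\pm(K,\xi)$ on $\mathrm{supp}\,\rho$, factor the constant-coefficient operator as $(1+|K|^2)(h\partial_r - r^{-1}T_{F_+})(h\partial_r - r^{-1}T_{F_-})$ up to $O(h)$ errors, use Lemma \ref{logbounds} to invert the factors, absorb the $O(\d)$ perturbation, and finish with a trace estimate and the commutator $[T_\rho,\Lphet]$.

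The one substantive simplification you are missing is that $r$ is bounded above and below on the support of $w_s$. Since $T_\rho$ is a Fourier multiplier in the $x$-variables only, it preserves the $r$-support of $w$, and $w$ was extended by zero from the bounded set $\Omt$. Hence on $\mathrm{supp}\,w_s$ one has $r \simeq 1$, so the weighted norm $\|\cdot\|_{H^1_r(\Rniip)}$ is equivalent to the ordinary $\|\cdot\|_{H^1(\Rniip)}$ there. This is exactly what the paper does: after one application of Lemma \ref{logbounds} it immediately replaces $H^1_r$ by $H^1$, and then the second step and the trace estimate are the flat ones from Sections 4--5, with no need to introduce an $H^2_r$ space or prove an $r$-weighted trace lemma. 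Your proposed route through $H^2_r$ and a weighted trace theorem would work, but it is unnecessary machinery.

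A minor correction: the $1/r$ and $1/r^2$ factors \emph{do} commute with the Fourier multipliers $T_{F_\pm}$, since the latter act only in $x$. The only non-trivial commutator in the factorization is $[h\partial_r, r^{-1}] = -h r^{-2}$, which is precisely the $\frac{hT_{F_-}}{r^2}$ term you already wrote down; this is bounded by $h\|w_s\|_{H^1(\Rniip)}$ once $r\simeq 1$, so the ``bookkeeping'' obstacle you anticipate largely dissolves.
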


\begin{proof}
We prove this by following the proof for the linear case.  For $V_1, V_2 \in \Rn$, define 
\[
A_{\pm}(V_1, V_2, \xi) = \frac{1 + iV_1 \cdot \xi \pm \sqrt{(1+iV_1\cdot \xi)^2 - (1+|V_2|^2)(1-|\xi|^2)}}{1 + |V_2|^2},
\]
so $A_{\pm}(V_1, V_2, \xi)$ are the roots of the polynomial
\[
(1 + |V_2|^2)X^2 - 2(1+ iV_1 \cdot \xi)X + (1 - |\xi|^2).
\]
As in the linear case, we can pick $F_{\pm}$ so that $F_{\pm} = A_{\pm}(K,K,\xi)$ on the support of $\hat{w}_s$, and $|F|, \mathrm{Re} \, F \simeq 1 + |\xi|$.  Then  
\begin{eqnarray*}
\|\Lphet w_s\|_{L^2(\Rniip)} &\geq& \left\| \left( (1+|K|^2)h^2\partial_r^2 - \frac{2}{r}(1 + K \cdot h\grad_x)h\partial_r + \frac{1 + h^2\Lap_x}{r^2} \right) w_s \right\|_{L^2(\Rniip)} \\
                             & &    - C\delta \|w_s\|_{H^2(\Rniip)}\\
\end{eqnarray*}
Now the operator in the first term does not factor exactly into $(h\partial_r - r^{-1}T_{F_{+}})(h\partial_r - r^{-1}T_{F_{-}})$ since the $r^{-1}T_{F_{\pm}}$ terms have $r$ dependence.  However, the errors can be bounded by $h\|w_s\|_{H^1(\Rniip)},$ so we can still get
\[
\|\Lphet w_s\|_{L^2(\Rniip)} \geq \|(h\partial_r - r^{-1}T_{F_{+}})(h\partial_r - r^{-1}T_{F_{-}})w_s \|_{L^2(\Rniip)} - C\delta\|w_s\|_{H^2(\Rniip)}.
\]
Then
\[
\|\Lphet w_s\|_{L^2(\Rniip)} \gtrsim \|(h\partial_r - r^{-1}T_{F_{-}})w_s \|_{H^1_r(\Rniip)} - C\delta\|w_s\|_{H^2(\Rniip)}.
\]
by Lemma \ref{logbounds}.  Since $r$ is bounded above and below by a constant on $\Omt$, and hence on the support of $w_s$, 
\[
\|\Lphet w_s\|_{L^2(\Rniip)} \gtrsim \|(h\partial_r - r^{-1}T_{F_{-}})w_s \|_{H^1(\Rniip)} - C\delta\|w_s\|_{H^2(\Rniip)}.
\]
Similarly,
\begin{eqnarray*}
\|\Lphet w_s\|_{L^2(\Rniip)} &\gtrsim& \|w_s \|_{H^2(\Rniip)} - C\delta\|w_s\|_{H^2(\Rniip)} \\
                             &\gtrsim& \|w_s \|_{H^2(\Rniip)} \\
\end{eqnarray*}
for small enough $\d$.  Then
\[
\|\Lphet  w\|_{L^2(\Rniip)} + h\|w\|_{H^1(\Rniip)} \gtrsim h^{\half}\|w_s\|_{\dot{H}^1(\Rnone)}
\]
as before.  
\end{proof}

\begin{prop}\label{loglargeprop}
Suppose $w$ and $w_{\ell}$ are as above.  Then if $\d$ is small enough,
\[
h^{\half} \|w_{\ell}\|_{\dot{H}^1(\Rnone)} \lesssim \|\Lphet w \|_{L^2(\Rniip)} + h\|w\|_{H^1(\Rniip)} + h^{\frac{3}{2}}\|w\|_{L^2(\Rnone)}.
\]
\end{prop}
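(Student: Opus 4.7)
The plan is to imitate the proof of Proposition \ref{largeprop} step by step, using the logarithmic operators $h\pd_r - r^{-1}T_F$ and the boundedness properties from Lemma \ref{logbounds} in place of their flat counterparts. The proof should factor $\Lphet$ (modulo acceptable errors) through two such operators, use the first factor to produce an $H^1$ estimate, apply the semiclassical trace theorem to descend to $\Rnone$, and then use the boundary condition \eqref{logtildeBC} to recognize the resulting boundary expression as $T_{|\xi|}$ applied to $w_{\ell}$, producing the $\dot{H}^1(\Rnone)$ norm on the right hand side.

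First I would introduce, for $V_1, V_2 \in \Rn$, the symbols
\[
A^{\e}_{\pm}(V_1, V_2, \xi) = \frac{\a + iV_1 \cdot \xi \pm \sqrt{(\a+iV_1 \cdot \xi)^2 - (1+|V_2|^2)(\a^2-|\xi|^2)}}{1 + |V_2|^2},
\]
which are the roots of $(1+|V_2|^2)X^2 - 2(\a + iV_1 \cdot \xi)X + (\a^2 - |\xi|^2)$, choosing the branch of the square root with non-negative real part. With the same cutoff $\z$ as in Proposition \ref{largeprop}, the branch singularity is again avoided on $\mathrm{supp}(1-\z)$ provided $\d$ is small (the only obstruction is $\b \cdot \xi = 0$ together with small $|\xi|$, which is exactly what the cutoff excludes). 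Setting $G^{\e}_{\pm}(V_1, V_2, \xi) := (1-\z)A^{\e}_{\pm}(V_1, V_2, \xi) + \z$ therefore produces first-order symbols in $\xi$. A direct computation then gives
\[
(1+|\g|^2)(h\pd_r - r^{-1}T_{G^{\e}_{+}(\b,\g,\xi)})(h\pd_r - r^{-1}T_{G^{\e}_{-}(\b,\g,\xi)}) = \Lphet - r^{-2}T_{\z^2} + 2r^{-1}T_{\z}h\pd_r + hE_1,
\]
where $E_1$ is a first order semiclassical operator absorbing the pseudodifferential composition errors together with the terms from the commutator $[h\pd_r, r^{-1}] = -hr^{-2}$; since $r \geq 1$ on $\Rniip$, $E_1$ is bounded $H^1(\Rniip) \to L^2(\Rniip)$. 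Applying this identity to $w_{\ell}$ (whose Fourier support misses $\z$) and following the linear proof, one replaces $G^{\e}_{+}(\b,\g,\xi)$ by $G_{+}(K,K,\xi)$ at cost of an $O(\d)$ factor in the $H^1$ norm, and invokes Lemma \ref{logbounds} on the $r$-support of $w_{\ell}$ (where $r \simeq 1$ makes $H^1_r$ and $H^1$ equivalent) to deduce
\[
\|\Lphet w\|_{L^2(\Rniip)} + h\|w\|_{H^1(\Rniip)} \gtrsim \|(h\pd_r - r^{-1}T_{G^{\e}_{-}(\b,\g,\xi)})w_{\ell}\|_{H^1(\Rniip)}.
\]

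Next I would apply the semiclassical trace theorem (translated from $\RnIp$ to $\Rniip$ by shifting $r \mapsto r-1$) to pick up a factor of $h^{1/2}$ and trace on $\Rnone$, then use the boundary condition \eqref{logtildeBC} together with $r=1$ on $\Rnone$ to write
\[
(h\pd_r - T_{G^{\e}_{-}(\b,\g,\xi)})w_{\ell}\big|_{\Rnone} = T_{\frac{1+i\b\cdot \xi}{1+|\g|^2} - A^{\e}_{-}(\b,\g,\xi)}\, w_{\ell} + h E_0 w,
\]
where $E_0: L^2(\Rn) \to L^2(\Rn)$ absorbs the $h\s w$ term in \eqref{logtildeBC} together with the small commutator coming from the cutoff $T_{\rho}$. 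The symbol on the right is exactly the square root appearing in the definition of $A^{\e}_{\pm}$, whose modulus is $\simeq 1 + |\xi|$ on $\mathrm{supp}(\widehat{w_{\ell}})$ by the choice of $\z$. This yields $\|w_{\ell}\|_{\dot{H}^1(\Rnone)}$ on the right, with the $hE_0 w$ contribution providing the $h^{3/2}\|w\|_{L^2(\Rnone)}$ term after multiplication by $h^{1/2}$.

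The hard part will be the careful bookkeeping of the $r^{-1}$ factors in the factorization: unlike the flat case where $h\pd_y$ commutes with constant coefficients, $[h\pd_r, r^{-1}] = -hr^{-2}$ produces additional $h$-order terms which must all be tracked and absorbed into $E_1$ using $r \geq 1$. A secondary technical point is moving between the weighted $H^1_r$-norm (natural for Lemma \ref{logbounds}) and the standard $H^1$-norm (needed for the trace theorem and for the statement), which is legitimate because $w$ is compactly supported in $\Omt$ and the $x$-frequency cutoff $T_{\rho}$ preserves the $r$-support, keeping $r$ uniformly comparable to $1$ throughout.
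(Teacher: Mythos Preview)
Your proposal is correct and follows essentially the same route as the paper's proof: define the two-vector symbols $A^{\e}_{\pm}(V_1,V_2,\xi)$ and their cut-off versions $G^{\e}_{\pm}$, factor $\Lphet$ through $(h\pd_r - r^{-1}T_{G^{\e}_{+}})(h\pd_r - r^{-1}T_{G^{\e}_{-}})$ modulo $hE_1$, replace the outer factor by its constant-coefficient version at cost $O(\d)$, invoke Lemma~\ref{logbounds} (using $r\simeq 1$ on the support of $w_\ell$ to pass between $H^1_r$ and $H^1$), apply the trace theorem, and then use \eqref{logtildeBC} on $\Rnone$. The paper's own proof is extremely terse (``as before'', ``following the linear case again'') and leaves the commutator $[h\pd_r,r^{-1}]$ bookkeeping and the $H^1_r$/$H^1$ equivalence implicit, whereas you have made these points explicit; otherwise the arguments coincide.
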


\begin{proof}

Define $G^{\e}_{\pm}(V_1, V_2,\xi)$ and $G_{\pm}(V_1, V_2,\xi)$ in relation to $A_{\pm}(V_1,V_2, \xi)$ as for the linear case.  Then as before, we get
\begin{eqnarray*}
\|\Lphet w_{\ell} \|_{L^2(\Rniip)} &\gtrsim& \|(h\partial_r - r^{-1}T_{G_{+}(K,K, \xi)})(h\partial_r - r^{-1}T_{G^{\e}_{-}(\b,\g, \xi)})w_{\ell}\|_{L^2(\Rniip)} \\
                                   & &       - h\|w_{\ell}\|_{H^1(\Rniip)} - \d\|(h\partial_r - r^{-1}T_{G^{\e}_{-}(\b,\g, \xi)})w_{\ell}\|_{H^1(\Rniip)} .\\
                                   &\gtrsim& \|(h\partial_r - r^{-1}T_{G^{\e}_{-}(\b,\g, \xi)})w_{\ell}\|_{H^1_r(\Rniip)} \\
                                   & &       - h\|w_{\ell}\|_{H^1(\Rniip)} - \d\|(h\partial_r - r^{-1}T_{G^{\e}_{-}(\b,\g, \xi)})w_{\ell}\|_{H^1(\Rniip)} .\\
                                   &\gtrsim& \|(h\partial_r - r^{-1}T_{G^{\e}_{-}(\b,\g, \xi)})w_{\ell}\|_{H^1(\Rniip)} \\
                                   & &       - h\|w_{\ell}\|_{H^1(\Rniip)}.\\
\end{eqnarray*}
for small enough $\d$.  Then by the trace formula,
\[
\|\Lphet w_{\ell} \|_{L^2(\Rniip)} + h\|w_{\ell}\|_{H^1(\Rniip)} \gtrsim h^{\half}\|(h\partial_r - T_{G^{\e}_{-}(\b,\g, \xi)})w_{\ell}\|_{L^2(\Rnone)}.
\]
Using the boundary condition from \eqref{logtildeBC} gives
\[
\|\Lphet w_{\ell} \|_{L^2(\Rniip)} + h\|w_{\ell}\|_{H^1(\Rniip)} \gtrsim h^{\half}\|w_{\ell}\|_{\dot{H}^1(\Rnone)} - h^{\frac{3}{2}}\|w\|_{L^2(\Rnone)}.
\]
Therefore, following the linear case again, we get
\[
h^{\half} \|w_{\ell}\|_{\dot{H}^1(\Rnone)} \lesssim \|\Lphet w \|_{L^2(\Rniip)} + h\|w\|_{H^1(\Rniip)} + h^{\frac{3}{2}}\|w\|_{L^2(\Rnone)}
\]
as desired.

\end{proof}

Putting together \eqref{logfreqsplit}, Proposition \ref{logsmallprop}, and Proposition \ref{loglargeprop}, plus a change of variables, now gives Theorem \ref{mainCarl} for the logarthmic case, $\ph = \log r$, in the graph case, where $\Om$ is such that  $|\b - K| < \d, |\g - K| < \d,$ and $\|(\L - \Lap_x) v \|_{L^2(\Rn)} < \d \|v\|_{H^2(\Rn)}$ for $v \in H^2(\Rn)$.  Then these estimates can be glued together as in Section 7 to give Theorem \ref{mainCarl} for logarithmic case without graph conditions.  

Now the result for $\ph = -\log r$ can be obtained from the result for $\ph = \log r$ by using the change of variables $(r, \th) \mapsto (r^{-1}, \th)$.  Alternatively, using $\ph = -\log r$ and flipping signs as necessary in the proof above gives the desired result.  

Note that in general, if we were to replace $\ph$ with $-\ph$, then the sets $\partial \Om_{+}$ and $\partial \Om_{-}$ would reverse roles, so $Z$ would take the role of $\Gamma$ and $Z^c$ would take the role of $\Gamma^c$.  Then we would end up with a proof of the following result, which we will state as a corollary.  

\begin{cor}\label{reverseCarl}
If $\ph$ is as in Theorem \ref{mainCarl}, and $w \in H^2(\Om)$ with 
\begin{equation}\label{reverseBC}
\begin{split}
w, \partial_{\nu} w &= 0 \mbox{ on } Z \\
h\partial_{\nu} (e^{\frac{\ph}{h}} w) &= 0 \mbox{ on } Z^c,
\end{split}
\end{equation}
then
\begin{equation*}
h^{\half} \|w\|_{H^1(Z^c)} + h\|w\|_{H^1(\Om)}  \lesssim \| \L_{q,-\ph}w \|_{L^2(\Om)}
\end{equation*}
\end{cor}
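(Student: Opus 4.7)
The plan is to derive Corollary \ref{reverseCarl} as an immediate consequence of Theorem \ref{mainCarl} applied with the weight $\varphi$ replaced by $-\varphi$. The key observation is that the two families of weights considered in the theorem, namely the linear $\varphi(x) = x_{n+1}$ and the logarithmic $\varphi(x) = \pm \log|x-p|$, are closed under sign reversal: $-x_{n+1}$ is still linear (equivalently, a rotation of the original weight), and $-\log|x-p| = (-1)\cdot\log|x-p|$ is already one of the two logarithmic choices permitted in Theorem \ref{logIPthm}. Consequently the entire proof of Theorem \ref{mainCarl} carried out in Sections 3--9 applies verbatim with $-\varphi$ in place of $\varphi$.

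Next I would note the reversal of the decomposition $\partial\Omega = \partial\Omega_+ \cup \partial\Omega_-$ under $\varphi \mapsto -\varphi$. Since $\partial_\nu(-\varphi) = -\partial_\nu \varphi$, we have $\partial\Omega_+^{-\varphi} = \partial\Omega_-^{\varphi}$ and $\partial\Omega_-^{-\varphi} = \partial\Omega_+^{\varphi}$. The hypothesis of Corollary \ref{reverseCarl} states that $Z$ is a neighborhood of $\partial\Omega_-^{\varphi}$, which is exactly a neighborhood of $\partial\Omega_+^{-\varphi}$. Hence $Z$ plays the role of the set called $\Gamma$ in Theorem \ref{mainCarl} when that theorem is invoked with weight $-\varphi$, and $Z^c$ plays the role of $\Gamma^c$.

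With these identifications in hand, I would simply match the hypotheses. Rewriting the boundary conditions \eqref{BC} of Theorem \ref{mainCarl} with $\varphi$ replaced by $-\varphi$ and with the smooth bounded function $\sigma$ chosen to be identically zero gives
\begin{equation*}
\begin{split}
w, \partial_\nu w &= 0 \text{ on } Z, \\
h \partial_\nu (e^{\varphi/h} w) &= 0 \text{ on } Z^c,
\end{split}
\end{equation*}
which is precisely \eqref{reverseBC}. Since $\mathcal{L}_{q,\varphi}$ becomes $\mathcal{L}_{q,-\varphi}$ under the sign flip, the conclusion of Theorem \ref{mainCarl} reads
\[
h^{\half}\|w\|_{H^1(Z^c)} + h\|w\|_{H^1(\Omega)} \lesssim \|\mathcal{L}_{q,-\varphi} w\|_{L^2(\Omega)},
\]
which is the desired bound.

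There is no real obstacle here, since the work has all been done in proving Theorem \ref{mainCarl}; the only thing to verify is that the proofs in Sections 3--9 genuinely did not use any feature of $\varphi$ beyond those preserved under $\varphi \mapsto -\varphi$ (e.g.\ the eventual positivity of $|\partial_\nu \varphi|$ on $\Gamma^c$, which by construction becomes positivity on $Z^c$). A brief remark confirming this, together with the identification above, suffices to conclude.
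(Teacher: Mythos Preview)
Your proposal is correct and follows essentially the same approach as the paper: the corollary is stated immediately after the remark that replacing $\varphi$ by $-\varphi$ reverses the roles of $\partial\Omega_{+}$ and $\partial\Omega_{-}$, so that $Z$ plays the role of $\Gamma$ and $Z^c$ that of $\Gamma^c$. The paper gives no further argument beyond this observation, so your write-up (with the additional verification that the admissible weight classes are closed under sign change and the choice $\sigma=0$) is, if anything, slightly more detailed than the original.
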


\section{Complex Geometrical Optics Solutions}

Now we turn to the proof of Proposition \ref{boundarysolns}.  First we'll need a lemma.

\begin{lemma}\label{HBsolns}
For every $v \in H^{-1}(\RnI)$ and $f \in L^2(\partial \Om)$, there exists $u \in L^2(\Om)$ such that 
\begin{eqnarray*}
\Lphq  u &=& v  \mbox{ on } \Om\\
(h \partial_{\nu} - \partial_{\nu} \ph)u|_{Z^c} &=& f 
\end{eqnarray*}
and
\[
\|u\|_{L^2(\Om)} \lesssim h^{-1}\|v\|_{H^{-1}(\RnI)} + h^{\half}\|f\|_{L^2(\partial \Om)}.
\] 
\end{lemma}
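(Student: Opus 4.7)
The plan is to prove this lemma by a standard Hahn--Banach/duality argument, using the reversed Carleman estimate in Corollary \ref{reverseCarl} applied to the formal adjoint $\L_{q,-\ph}$ of $\Lphq$. First I would record the duality identity. For smooth $u$ and $w$, two integrations by parts (absorbing the exponential factors by setting $\tilde u = e^{-\ph/h}u$ and $\tilde w = e^{\ph/h}w$) yield
\[
(\Lphq u, w)_{\Om} - (u, \L_{q,-\ph} w)_{\Om} = h^2 \int_{\pd\Om}\bigl(\tilde u \,\partial_\nu \tilde w - \tilde w\, \partial_\nu \tilde u\bigr)\,dS.
\]
If $w$ satisfies the conditions \eqref{reverseBC}, namely $w = \partial_\nu w = 0$ on $Z$ and $h\partial_\nu \tilde w = 0$ on $Z^c$, the boundary integral collapses to $-h \int_{Z^c} w\,(h\partial_\nu - \partial_\nu \ph)u\, dS$. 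Thus any solution $u$ of the desired problem would satisfy
\[
(u, \L_{q,-\ph} w)_{\Om} = (v, w)_{\Om} + h\int_{Z^c} f\, w\,dS
\]
for every test function $w$ in the space $\mathcal{D}\subset H^2(\Om)$ obeying \eqref{reverseBC}.

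Next I would construct $u$ as a Riesz representative. By Corollary \ref{reverseCarl}, $\L_{q,-\ph}$ is injective on $\mathcal{D}$, so on the subspace $X := \L_{q,-\ph}(\mathcal{D})\subset L^2(\Om)$ the linear functional
\[
T(\L_{q,-\ph} w) := \overline{(v,w)_{\Om} + h\int_{Z^c} f\, w\, dS}
\]
is unambiguously defined. Using Corollary \ref{reverseCarl} in the two forms $\|w\|_{H^1(\Om)} \lesssim h^{-1}\|\L_{q,-\ph} w\|_{L^2(\Om)}$ and $\|w\|_{L^2(Z^c)} \lesssim h^{-1/2}\|\L_{q,-\ph} w\|_{L^2(\Om)}$, together with a Sobolev extension $H^1(\Om)\to H^1(\RnI)$ to pair $w$ against $v \in H^{-1}(\RnI)$, I obtain
\[
|T(\L_{q,-\ph} w)| \lesssim \bigl(h^{-1}\|v\|_{H^{-1}(\RnI)} + h^{\half}\|f\|_{L^2(\pd\Om)}\bigr)\,\|\L_{q,-\ph} w\|_{L^2(\Om)}.
\]
Hahn--Banach then extends $T$ to all of $L^2(\Om)$ with the same operator norm, and the Riesz representation theorem produces $u \in L^2(\Om)$ with $T(g) = (u,g)_{L^2(\Om)}$ for all $g \in L^2(\Om)$, satisfying the claimed bound.

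It remains to recover the PDE and the boundary datum from $(u, \L_{q,-\ph} w)_{\Om} = (v,w)_{\Om} + h\int_{Z^c} f w\, dS$. Testing against $w \in C^\infty_c(\Om)\subset \mathcal{D}$ immediately gives $\Lphq u = v$ in the distributional sense on $\Om$. The main obstacle, as usual in this construction, is interpreting the weighted co-normal trace $(h\partial_\nu - \partial_\nu \ph)u$ when $u$ is only in $L^2(\Om)$. Since $\Lphq u \in H^{-1}(\Om)$, one can define this trace weakly via the generalized Green's identity derived above: applied to arbitrary $w \in \mathcal{D}$, the identity makes $(h\partial_\nu - \partial_\nu \ph)u|_{Z^c}$ a well-defined element of a suitable negative-order Sobolev space on $\pd\Om$, and reinserting the functional equation satisfied by $u$ forces it to equal $f$. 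This completes the proof.
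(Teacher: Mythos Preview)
Your proposal is correct and follows essentially the same Hahn--Banach/duality argument as the paper: both use Corollary~\ref{reverseCarl} to bound the linear functional $\L_{-\ph}w \mapsto (v,w)_{\Om} + h(f,w)_{Z^c}$ on the range of the adjoint operator, extend it to $L^2(\Om)$, and then recover the equation and boundary condition by integrating by parts. The only minor differences are cosmetic (the paper writes $\L_{\overline q,-\ph}$ to accommodate possibly complex $q$, and performs the integration by parts at the end rather than recording the duality identity up front); your added remark about interpreting the weighted co-normal trace when $u$ is merely in $L^2$ is a point the paper glosses over.
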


\begin{proof}
We follow the methods in ~\cite{KSU}, but using the Carleman estimate from Corollary \ref{reverseCarl}.  Let $v \in H^{-1}(\RnI)$ and $f \in L^2(\partial \Om)$.  Suppose $w \in H^2(\Om)$ satisfies \eqref{reverseBC}, and consider the expression
\[
(w,v)_{\Om} + (w,hf)_{\partial \Om}.
\]
We have
\begin{equation*}
\begin{split}
|(w,v)_{\Om} + (w,hf)_{\partial \Om}| &\leq h\|w\|_{H^1(\RnI)}h^{-1}\|v\|_{H^{-1}(\RnI)} + h^{\half}\|w\|_{L^2(Z^c)}h^{\half}\|f\|_{L^2(\partial \Om)} \\
                                     &\lesssim \| \L_{\overline{q},-\ph}w \|_{L^2(\Om)}(h^{-1}\|v\|_{H^{-1}(\RnI)}+h^{\half}\|f\|_{L^2(\partial \Om)}), \\
\end{split}
\end{equation*}
with the second inequality being a consequence of Corollary \ref{reverseCarl}.
Now consider the subspace
\[
\{ \L_{\overline{q},-\ph}w | w \in H^2(\Om) \mbox{ and } w \mbox{ satisfies \eqref{reverseBC} } \} \subset L^2(\Om).
\]
By Corollary \ref{reverseCarl}, the linear functional $\L_{\overline{q},-\ph}w \mapsto (w,v)_{\Om} + (w,hf)_{\partial \Om}$ is well defined on this space.  Then the above estimate shows that it is bounded by $C(h^{-1}\|v\|_{H^{-1}(\RnI)}+h^{\half}\|f\|_{L^2(\partial \Om)})$.  Therefore by Hahn-Banach, there is an extension of the functional to the whole space $L^2(\Om)$ with the same bound.  This can be represented by an element of $L^2(\Om)$, so there exists $u \in L^2(\Om)$ such that 
\[
\|u\|_{L^2(\Om)} \lesssim h^{-1}\|v\|_{H^{-1}(\RnI)}+h^{\half}\|f\|_{L^2(\partial \Om)},
\]
and
\[
(w,v)_{\Om} + (w,hf)_{\partial \Om} = (\L_{\overline{q},-\ph}w, u).
\]
Integrating by parts on the right side,
\[
(w,v)_{\Om} + (w,hf)_{\partial \Om} = (w, \Lphq u)_{\Om} -h(h\partial_{\nu} w, u)_{\partial \Om} + h(w, h \partial_{\nu} u)_{\partial \Om} - 2h(w, \partial_{\nu} \ph u)_{\partial \Om}.
\]
This holds for all $w \in H^2(\Om)$ which satisfy \eqref{reverseBC}, so in particular it holds for all $w \in C^{\infty}_0(\Om)$.  This means that
\[
\Lphq u = v
\]
on $\Om$.  Thus
\[
(w,hf)_{\partial \Om} = -h(h\partial_{\nu} w, u)_{\partial \Om} + h(w, (h \partial_{\nu} - 2\partial_{\nu} \ph) u)_{\partial \Om}.
\]
Using the boundary conditions \eqref{reverseBC}, we get
\[
(w,hf)_{Z^c} = h(w, (h \partial_{\nu} - \partial_{\nu} \ph) u)_{Z^c}.
\]
for all $w \in H^2(\Om)$ which satisfy \eqref{reverseBC}.  Therefore 
\[
(h \partial_{\nu} - \partial_{\nu} \ph) u|_{Z^c} = f.
\]

\end{proof}

Now we can construct the CGO solutions.  

\begin{proof}[Proof of Proposition \ref{boundarysolns}]

If $\psi(x,y)$ solves the eikonal equations
\[
\grad \ph \cdot \grad \psi = 0,  |\grad \ph| = |\grad \psi|,
\]
and $a$ is a solution to the Cauchy-Riemann equation
\[
(-\grad \ph + i\grad \psi) \cdot \grad a + \half a \Lap (-\ph + i\psi) = 0
\]
then 
\[
h^2(-\Lap + q)e^{\frac{1}{h}(-\ph + i\psi)}a = O(h^2)e^{\frac{-\ph}{h}}.
\]
Now consider the problem
\begin{equation*}
\begin{split}
\grad \ell \cdot \grad \ell &= 0 \\
                \ell|_{Z^c} &= -\ph + i\psi.
\end{split}
\end{equation*}
By using power series and Borel's theorem, we can construct an approximate solution $\ell$ for $x$ near $Z^c$ with 
\begin{equation*}
\begin{split}
\grad \ell \cdot \grad \ell &= O(\mathrm{dist}(Z^c, x)^{\infty}) \\
                    \ell|_{Z^c} &= -\ph + i\psi \\
              \nd \ell|_{Z^c} &= -\nd(-\ph + i\psi)|_{Z^c}. \\
\end{split}
\end{equation*}
This last property also means that 
\begin{equation}\label{ellgood}
-(\mathrm{Re} \, \ell(x,y) + \ph(x,y)) \simeq \mathrm{dist}(Z^c, x)
\end{equation}
in a neighbourhood of $Z^c$.  (Similar constructions are used in ~\cite{KSU} and ~\cite{C}.)  

Then we can similarly create an approximate solution for the problem
\begin{equation*}
\begin{split}
\grad \ell \cdot \grad b - \half b \Lap \ell  &= 0 \\
                                  b|_{Z^c} &= a|_{Z^c},\\
\end{split}
\end{equation*}
so
\begin{equation*}
\begin{split}
\grad \ell \cdot \grad b - \half b \Lap \ell &= O(\mathrm{dist}(Z^c, x)^{\infty}) \\
                                  b|_{Z^c} &= a|_{Z^c},\\
\end{split}
\end{equation*}
Multiplying $b$ by a cutoff function which vanishes away from $Z^c$ does not change these properties, so we may as well cut off $b$ to have support inside a neighbourhood in which \eqref{ellgood} holds.  
Then
\[
h^2 (-\Lap+q)(e^{\frac{\ell}{h}}b) = e^{\frac{\ell}{h}} (O(\mathrm{dist}(x,E)^{\infty}) + O(h^2)),
\]
so
\begin{equation*}
\begin{split}
|h^2 (-\Lap+q)(e^{\frac{\ell}{h}}b)| &= e^{\frac{-\ph}{h}}e^{\frac{\mathrm{Re} \ell + \ph}{h}}(O(\mathrm{dist}(x,E)^{\infty}) + O(h^2)) \\
                                &= e^{\frac{-\ph}{h}}O(h^2). \\
\end{split}
\end{equation*}
Therefore
\[
e^{\frac{\ph}{h}}h^2(-\Lap + q)e^{\frac{-\ph+i\psi}{h}}(a + e^{\frac{\ell + \ph - i\psi}{h}}b) = v
\]
for some function $v$ with $\|v\|_{L^2(\Om)} = O(h^2)$, and
\[
e^{\frac{\ph}{h}} h \partial_{\nu} e^{\frac{-\ph+i\psi}{h}}(a + e^{\frac{\ell + \ph - i\psi}{h}}b)|_{Z^c} = g 
\]
for some function $g$ with $\|g\|_{L^2(\partial \Om)} = O(h)$.
By Lemma \ref{HBsolns}, the problem 
\begin{eqnarray*}
\Lphq r_0 &=& -v  \mbox{ on } \Om\\
(h \partial_{\nu} - \partial_{\nu} \ph)r_0|_{Z^c} &=& -g 
\end{eqnarray*}
has a solution $r_0 \in L^2(\Om)$ with $\|r_0\|_{L^2(\Om)} = O(h)$.  Then if $r_1 = e^{\frac{-i\psi}{h}}r_0$, then $\|r_1\|_{L^2(\Om)} = O(h)$, and 
\[
(-\Lap + q)e^{\frac{-\ph+i\psi}{h}}(a + e^{\frac{\ell + \ph - i\psi}{h}}b + r_1) = 0
\]
on $\Om$, with 
\[
\partial_{\nu} e^{\frac{-\ph+i\psi}{h}}(a + e^{\frac{\ell + \ph - i\psi}{h}}b + r_1)|_{Z^c} = 0
\]
Now note that \eqref{ellgood} implies that $\|e^{\frac{\ell + \ph - i\psi}{h}}b\|_{L^2(\Om)} = O(h^{\half}).$  Therefore we can let $r = e^{\frac{\ell + \ph - i\psi}{h}}b + r_1$, and then
\[
u = e^{\frac{-\ph+i\psi}{h}}(a + r)
\]
is the desired solution.  In the logarithmic case, $a$ and $\psi$ are exactly the functions used in the CGO solutions in ~\cite{KSU}.  In the linear case, we can use $a = 1$, and $\psi$ is exactly as in the CGO solutions in ~\cite{BU}.

\end{proof}


\end{document}